\newtheorem*{lemma}{Lemma}
\newtheorem*{prop}{Proposition}
\newtheorem*{thm}{Theorem}
\newtheorem*{cor}{Corollary}
\newcommand{\iso}{\overset{\sim}{\rightarrow}}
\newcommand{\nc}{\newcommand}
\nc{\ad}{\operatorname{ad}} \nc{\tr}{\operatorname{tr}}
\nc{\tp}{\operatorname{top}} \nc{\rank}{\operatorname{rank}}
\nc{\Inj}{\operatorname{Inj}} \nc{\Hom}{\operatorname{Hom}}
\nc{\End}{\operatorname{End}} \nc{\supp}{\operatorname{supp}}
\nc{\ch}{\operatorname{ch}} \nc{\im}{\operatorname{im}}
\nc{\gr}{\operatorname{gr}}\nc{\Id}{\operatorname{Id}}
\begin{document}

\title[Zhelobenko Invariants]{Zhelobenko Invariants, Bernstein-Gelfand-Gelfand operators and the analogue Kostant Clifford Algebra Conjecture}
\author[Anthony Joseph]{Anthony Joseph}

\
\footnotetext[1]{Work supported in part by
Israel Science Foundation Grant, no. 710724.}
\date{\today}
\maketitle

\vspace{-.9cm}\begin{center}
Donald Frey Professional Chair\\
Department of Mathematics\\
The Weizmann Institute of Science\\
Rehovot, 76100, Israel\\
anthony.joseph@weizmann.ac.il
\end{center}\

Key Words: Zhelobenko invariants, BGG operators.

 AMS Classification: 17B35

 \

\textbf{Abstract}

\

Let $\mathfrak g$ be a complex simple Lie algebra and $\mathfrak h$ a Cartan subalgebra. The Clifford algebra $C(\mathfrak g)$ of $\mathfrak g$ admits a Harish-Chandra map.  Kostant conjectured (as communicated to Bazlov in about 1997) that the value of this map on a (suitably chosen) fundamental invariant of degree $2m+1$ is just the zero weight vector of the simple $(2m+1)$-dimensional module of the principal s-triple obtained from the Langlands dual $\mathfrak g^\vee$.  Bazlov \cite {B1} settled this conjecture positively in type $A$.

The Kostant conjecture was reformulated (Alekseev-Bazlov-Rohr \cite {AM,B2,R}) in terms of the Harish-Chandra map for the enveloping algebra $U(\mathfrak g)$ composed with evaluation at the half sum $\rho$ of the positive roots.

Here an analogue of the Kostant conjecture is settled by replacing the Harish-Chandra map by a ``generalized Harish-Chandra" map which had been studied notably by Zhelobenko \cite {Z}.  The proof involves a symmetric algebra version of the Kostant conjecture (settled in works of Alekseev-Bazlov-Rohr), the Zhelobenko invariants in the adjoint case and surprisingly the Bernstein-Gelfand-Gelfand operators introduced in their study \cite {BGG} of the cohomology of the flag variety.

\section{Introduction}

\

The base field $k$ is assumed algebraically closed of characteristic zero throughout.

\subsection{}\label{1.1}

Let $\mathfrak g$ be a simple Lie algebra and $\mathfrak h$ Cartan subalgebra of $\mathfrak g$. Let $U(\mathfrak g)$ (resp. $S(\mathfrak g)$) denote the enveloping (resp. symmetric) algebra of $\mathfrak g$ with $Z(\mathfrak g)$ (resp. $Y(\mathfrak g)$) the corresponding invariant subalgebra for adjoint action.  Let $\mathscr F$ be the canonical (resp. degree) filtration on $U(\mathfrak g)$ (resp. $S(\mathfrak g)$.

Let $\Delta \subset \mathfrak h^*$ be the set of non-zero roots of $\mathfrak g$, $\Delta^+$ a choice of positive roots and $\pi=\{\alpha\}_{i\in I}:I:=1,2,\ldots,\ell$, the corresponding set of simple roots.

Given $\gamma \in \Delta$, let $\gamma^\vee$ denote the corresponding coroot (for example identified with $2\gamma/(\gamma,\gamma)$ through the Cartan inner product on $\mathfrak h^*$).  Let $\Delta^\vee$ (resp. $\Delta^{\vee+}$) denote the set of coroots (resp. positive coroots).

Let $s_i$ be the simple reflection defined by $\alpha_i\in \pi$ and $W$ the group they generate.  Let $\rho$ denote the half sum of the elements of $\Delta^+$.

For all $w \in W$, let $\ell(w)$ denote its reduced length relative to the given set of simple reflections.  For all $\gamma \in \Delta^+$ (resp. $\gamma \in \Delta^{\vee+}$), let $o(\gamma)$ (respect $o(\gamma^\vee)$) denote the sum of its coefficients with respect to the given set of simple roots (coroots).

Observe that $o(\gamma^\vee)=\rho(\gamma^\vee)$, for all $\gamma^\vee \in \Delta^{\vee+}$.

Let $\mathfrak g^\vee$ denote the Langlands dual of $\mathfrak g$. Its roots are the coroots of $\mathfrak g$. One may identify a Cartan subalgebra $\mathfrak h^\vee$ of $\mathfrak g^\vee$ with $\mathfrak h^*$ and then with $\mathfrak h$ though the Cartan scalar product.  Let $e^\vee,h^\vee,f^\vee$ be a principal s-triple for $\mathfrak g^\vee$ chosen so that $h^\vee \in \mathfrak h^\vee$. One may identify $h^\vee$ with $\rho$.

\subsection{}\label{1.2}

The Kostant (Clifford algebra) conjecture arose in the study of the Harish-Chandra map for the Clifford algebra $C(\mathfrak g)$.  It proposes that two ``naturally defined" filtrations on $\mathfrak h$ coincide. It was positively settled for $\mathfrak g$ of type $A$ in the thesis of Bazlov \cite {B1}.

The Kostant conjecture can be rephrased \cite {B2} in terms of the more familiar Harish-Chandra map $\varphi$ for $U(\mathfrak g)$. Composing $\varphi$ with evaluation at $\rho$ (which naturally arises in the study of $C(\mathfrak g)$) gives a map $\varphi_\rho:U(\mathfrak h) \rightarrow k$.  Then the first of the above filtrations obtains by applying $\varphi_\rho$ to the right hand factor in $(\mathfrak g \otimes \mathscr F^m U(\mathfrak g))^\mathfrak g$.  The second is defined by the action of $e^\vee$ on $\mathfrak h$ identified with $\mathfrak h^\vee$.

A slight generalization of Bazlov's result relevant to this latter context has been announced by Alekseev and Moreau \cite {AM}.  In this the Kostant conjecture is proved in type $A$ but with respect to evaluation at any multiple of $\rho$.

The Kostant conjecture admits a version in which the enveloping algebra is replaced by the symmetric algebra and the Harish-Chandra map by the Chevalley restriction map.  In this context it has been positively settled by Rohr \cite {R}.  However this solution is deceptively simple and so far attempts to extend this to the enveloping algebra context have failed.  The trouble is that the lower order terms introduced in the non-commutative context cannot be ignored or easily spirited away. (It would seem that the attempt to do this in \cite {B2} has a difficulty.)  Indeed the Kostant conjecture admits a generalization (at least if $\mathfrak g$ is simply-laced) in which $\mathfrak g$ appearing in the left hand factor above is replaced by any simple finite dimensional $\mathfrak g$ module $V$.  This has a positive answer in the symmetric algebra version; but fails even for the $27$ dimensional representation of $\mathfrak {sl}(3)$ in the enveloping algebra version.  

\subsection{}\label{1.3}

The Kostant conjecture in the general context of an arbitrary simple module $V$ occurring in $U(\mathfrak g)$ can be shown \cite {J2} to be equivalent to the coincidence of the ``operator" and ``degree" filtrations on the dual $\delta M(0)$ of a Verma module\footnote{We refer to this as yet unpublished note only for information.  Its content will not be used here.}. It is rather easy to appreciate that this must fail badly even though the corresponding graded objects are isomorphic. Unfortunately this failure persists even when we restrict to that part of $\delta M(0)$ coming from modules arising in $C(\mathfrak g)$, which form a much smaller class.  This does not rule out the possibility that these filtrations coincide on that part $\delta M(0)$ coming from the adjoint module. Indeed the adjoint module admits a rather canonical presentation in $C(\mathfrak g)$ through Chevalley transgression and this presentation may well have been what lay behind the Kostant conjecture.  However so far we have been unable to make use of this technology.

\subsection{}\label{1.4}

The Kostant conjecture serves no particular purpose except that being as we now know a very difficult question, it forces us to make an in-depth study of the structures which lead to its formulation.   In view of this we propose to study an ``analogue Kostant conjecture" in which the Harish-Chandra map is replaced by the ``generalized Harish-Chandra" map. Here a distinct advantage is the image of the map is known to be that part of $\mathfrak h\otimes S(\mathfrak h)$ invariant under the set $\Xi$ of Zhelobenko \cite {Z} operators.  The proof of this statement (which holds for any $V$) is the subject of a recent work of Khoroshkin, Nazarov and Vinberg \cite {KNV}. An alternative proof has been given in \cite {J1}.  The latter is based on the computation of a determinant of the transition matrix from the zero weight subspace $V_0$ of $V$ and the generators of the Zhelobenko invariants.

\subsection{}\label{1.5}

The computation of the Zhelobenko invariants seems to be a rather daunting task even when $V$ is the adjoint representation.  Whilst \cite [Sect. 4]{KNV} describe these invariants implicitly, their description relies on fixing a basis of $V_0$ adapted to each simple root in turn.  Consequently it is completely unclear what should be explicit formulae for these invariants.

In the case of the adjoint representation we fix a basis in $V_0=\mathfrak h$ given by the set $\{\varpi_i\}_{i\in I}$ of fundamental weights.  Then a Zhelobenko invariant $J$ takes the form $\sum_{i\in I} \varpi_i\otimes  q_i$, for some $q_i \in S(\mathfrak h)$.  The leading order terms of the $q_i$ take the form $\partial q/\partial \varpi_i$ where $q \in S(\mathfrak h)^{W.}$, that is to say an invariant under the translated action of the Weyl group $W$.  Even the latter can hardly be described as being known explicitly except perhaps for the classical Lie algebras. Moreover even in type $A$ the lower order terms in the Zhelobenko invariants do not follow a particularly simple pattern.

\subsection{}\label{1.6}

Our present goal is to obtain just enough information on the Zhelobenko invariants to settle the analogue Kostant conjecture. It was a surprise that this involves the Bernstein-Gelfand-Gelfand (or simply, BGG) operators which were introduced \cite {BGG} to study the cohomology of the flag variety.   In more simple-minded terms the BGG operators give the dimension polynomials for the Demazure modules starting from the product of the positive roots.  Quite coincidentally (so it would seem) this latter polynomial is up to a minor modification just the determinant associated to the Zhelobenko operators mentioned in \ref {1.4} for the adjoint module.

\subsection{}\label{1.7}

Just as in \ref {1.2} it is possible to formulate the analogue Kostant problem in the context of an arbitrary simple module occurring in $U(\mathfrak g)$.  At present its solution is beyond our present means.  Moreover whilst the analogue Kostant conjecture for the adjoint module is still very natural in the context of the Clifford algebra and may yet prove to be equivalent to the original Kostant conjecture is this narrower framework, there is much less reason for the latter to be true in general.

\

\textbf{Acknowledgements.}  This work had two inspirations.  First, Alekseev persuaded me to work on the Kostant conjecture during my stay in Geneva in June 2010.  Some results were obtained but there were rather inconclusive \cite {J2}. Alekseev seemed to think one might be able to make use of the Zhelobenko operators.  This did not settle the Kostant conjecture but led to the present work.  Second, Nazarov introduced us to the Zhelobenko operators during his visit to the Weizmann Institute during March 2010 and in particular to his result with Khoroshkin and Vinberg \cite {KNV}.  In May 2011, I was his guest in York during which time I was able to give an alternative proof \cite {J1} of their theorem.  Finally I tried to compute the Zhelobenko invariants for the adjoint case which led to the present work.  The results obtained here for the simply-laced case, which is rather easier, were presented at our seminar in the Weizmann Institute in August 2011.

I would particularly like to thank Anton Alekseev and Maxim Nazarov for their hospitality and inspiration.  I should also like to acknowledge my former student Yuri Bazlov who settled the Kostant conjecture for type $A$ in his phD thesis \cite {B1}  presented at the Weizmann Institute in 2000.

\section{The Basic Identity}

\subsection{}\label{2.1}

Let
$$\mathfrak g = \mathfrak n^- \oplus\mathfrak h \oplus
\mathfrak n^+,$$ be the triangular decomposition of $\mathfrak g$ coorresponding to our choices in \ref {1.1}.

Let $V$ be a simple finite dimensional $U(\mathfrak g)$ module (eventually just the adjoint module). Define a right action of $U(\mathfrak g)$ on $V\otimes U(\mathfrak g)$ by right multiplication on the second factor and a left action given by
$$x(v\otimes a)=xv \otimes a +v\otimes xa, \forall x \in \mathfrak
g, a \in U(\mathfrak g), v \in V.$$

 Define an action of $\ad x :x \in \mathfrak g$ on $V \otimes U(\mathfrak g)$ by $\ad x (v \otimes a) :=x(v \otimes a)+ (v \otimes a)x = xv \otimes a + v \otimes [x,a]$.

Starting from the lowest weight vector for $V$ an easy induction
argument shows that we have a direct sum decomposition $V\otimes U(\mathfrak g) =V \otimes U(\mathfrak h) \oplus (\mathfrak n^-(V\otimes U(\mathfrak g))+(V\otimes U(\mathfrak g))\mathfrak n^+)$.  Let $\hat{\Phi}$ denote the projection onto the second factor. It is called the generalized Harish-Chandra map.

It follows from the above that
$$\hat{\Phi}((V\otimes U(\mathfrak g))^\mathfrak g) \subset V_0 \otimes S(\mathfrak h).$$

Zhelobenko \cite {Z} has defined a set $\Xi:=\{\xi_i\}_{i\in I}$ of operators on (a localization of) $V\otimes U(\mathfrak g)$ which act by the identity on invariants and with the marvelous property that they pass through the generalized Harish-Chandra map to operators (which we shall denote by the same symbols) on (a localization of) $V \otimes U(\mathfrak h)$.  A basic result proved by Khoroshkin, Nazarov and Vinberg \cite {KNV} is that $\hat{\Phi}$ induces an isomorphism of $(V\otimes U(\mathfrak g))^\mathfrak g$ onto $(V_0 \otimes S(\mathfrak h))^\Xi$.
We refer to it as the KNV theorem.

\subsection{}\label{2.2}

The Zhelobenko operators and their main properties are reviewed in \cite {KNV} and in \cite {J1}.  For our own convenience we shall use the latter as a reference.

Recall that there is a translated Weyl group action on $\mathfrak h^*$ defined by $w.\lambda=w(\lambda +\rho)-\rho$.  This induces a translated Weyl group action on $S(\mathfrak h)$ by identifying the latter with the algebra of polynomial functions on $\mathfrak h^*$ and transport of structure.  This immediately implies that
$$w.qp=(w.q)(w.p), \forall w\in W, q,p \in S(\mathfrak h), \eqno {(1)}$$

Again let $h_i=\alpha_i^\vee$ be the coroot corresponding to $\alpha_i$. Then for all  $h\in \mathfrak h, m \in \mathbb Z$ one  $s_i.(h+m)(\lambda)=(h+m)(s_i.\lambda)=h(\lambda - (h_i(\lambda) +1)\alpha_i)+m=h(\lambda)+m -h(\alpha_i)(h_i(\lambda)+1)$, that is
$$s_i.(h+m)= h+m-h(\alpha_i)(h_i+1). \eqno {(2)}$$

In addition Zhelobenko \cite [Lemma 2.3]{J1} showed the

\begin {lemma}  For all $i \in I$, $a \in V \otimes S(\mathfrak h)$, $b \in S(\mathfrak h)$ one has $\xi_i(ba)=(s_i.b)\xi_i(a)$.
\end {lemma}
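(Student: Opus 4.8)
The plan is to reduce the identity to the case of a \emph{linear} $b$ and then to extract it from the explicit shape of $\xi_i$ on $V\otimes U(\mathfrak h)$ together with formula $(2)$.

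First I would set up the reduction. Write $P(b)$ for the statement ``$\xi_i(ba)=(s_i.b)\,\xi_i(a)$ for every $a\in V\otimes S(\mathfrak h)$''. Since $\xi_i$ is $k$-linear and $b\mapsto s_i.b$ is $k$-linear (it is the pullback of polynomial functions on $\mathfrak h^*$ along the affine bijection $\lambda\mapsto s_i.\lambda$), the set $\{b:P(b)\ \text{holds}\}$ is a $k$-subspace; by $(1)$ (taken with $w=s_i$) it is also multiplicatively closed, because if $P(b_1)$ and $P(b_2)$ hold then, for all $a$,
$$\xi_i((b_1b_2)a)=\xi_i(b_1(b_2a))=(s_i.b_1)\,\xi_i(b_2a)=(s_i.b_1)(s_i.b_2)\,\xi_i(a)=(s_i.(b_1b_2))\,\xi_i(a).$$
As $P(1)$ is trivial and $S(\mathfrak h)$ is generated as a $k$-algebra by $\mathfrak h$, it then suffices to prove $P(h)$ for $h\in\mathfrak h$, i.e.
$$\xi_i(ha)=(s_i.h)\,\xi_i(a),\qquad h\in\mathfrak h,\ a\in V\otimes S(\mathfrak h).\eqno{(*)}$$

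Then I would prove $(*)$ by unwinding the definition of $\xi_i$ from \cite{J1}: on $V\otimes U(\mathfrak h)$ it is a (suitably localized, and on the subspace at hand finite) sum of terms built from the lowering operator $f_i$ acting on the left $V$-factor, the raising operator $e_i$ acting by right multiplication on the $U(\mathfrak g)$-factor and returned through $\hat{\Phi}$, and rational coefficient functions in $h_i$. Multiplying by $h$ on the $S(\mathfrak h)$-factor and commuting it through such a term produces three contributions: the relation $e_i^n h=(h-n\,\alpha_i(h))\,e_i^n$ in $U(\mathfrak g)$, coming from $[e_i,h]=-\alpha_i(h)e_i$; the fact that $f_i^n$ carries a zero weight vector of $V$ into $V_{-n\alpha_i}$, on which $h$ acts with the correction $-n\,\alpha_i(h)$ relative to weight $0$; and the $\rho$-shift hidden in the denominators $\prod_{k=1}^n(h_i+k)$, which contributes the residual $-\alpha_i(h)$. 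Assembling the three, $h$ gets replaced throughout by $h-\alpha_i(h)(h_i+1)$, which by $(2)$ (with $m=0$) is exactly $s_i.h$; this gives $(*)$ and hence the Lemma.

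The main obstacle will be precisely this last bookkeeping: one must check that the three shifts combine to give $h-\alpha_i(h)(h_i+1)$, with the $\rho$-corrected coroot $h_i+1$ rather than the naive $h_i$. This is where Zhelobenko's particular normalization of $\xi_i$ (the choice of the rational functions of $h_i$) enters in an essential way, and it is the structural reason the Lemma features the dot action of $W$ rather than an ordinary weight translation. Everything else is the formal calculus of $(1)$, $(2)$ and the commutation relations in $U(\mathfrak g)$.
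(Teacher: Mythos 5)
The paper does not prove this lemma; it is quoted from \cite[Lemma 2.3]{J1} and attributed to Zhelobenko, so there is no internal proof to compare against. Judged on its own, your reduction to a linear $b=h\in\mathfrak h$ is correct and economical: $(1)$ makes $\{b:P(b)\}$ a subalgebra and $\mathfrak h$ generates $S(\mathfrak h)$. The problem is the second half, which you yourself flag as ``the main obstacle.'' The bookkeeping you describe does not close. You attribute a shift of $-n\alpha_i(h)$ to commuting $h$ past $e_i^n$ in the second factor, another $-n\alpha_i(h)$ to the weight of $f_i^n v$, and a ``residual'' $-\alpha_i(h)$ to the denominators. These are $n$-dependent contributions (the first two), and they are supposed to reproduce the $n$-\emph{in}dependent substitution $h\mapsto h-\alpha_i(h)(h_i+1)=s_i.h$; there is no $n$ for which $-2n\alpha_i(h)+(\dots)$ equals $-\alpha_i(h)(h_i+1)$ as an element of $S(\mathfrak h)$. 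Moreover the picture ``$e_i$ acting by right multiplication on the $U(\mathfrak g)$-factor and returned through $\hat\Phi$'' is the pre-reduction description on $V\otimes U(\mathfrak g)$; after $\hat\Phi$, on $V\otimes U(\mathfrak h)$ the whole $e_i,f_i$ calculus sits on the $V$-factor only (as the formula $\eta_i(h\otimes 1)=(\ad e_i\,\ad f_i)h\otimes h_i^{-1}$ in \ref{2.4} already shows), and nothing in the second factor sees an $e_i$ to commute with.

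The statement is in fact structurally immediate from the factorization $\xi_i=\eta_i s_i$ recorded in \ref{2.4} (from \cite[Eq.\ (1) and 2.3]{J1}), which you never invoke. On $V\otimes S(\mathfrak h)$ the operator $s_i$ acts by the ordinary reflection on the $V$-factor and by the dot action on the $S(\mathfrak h)$-factor, so $s_i(ba)=(s_i.b)\,s_i(a)$ directly from $(1)$, and no linear-$b$ reduction is needed. Meanwhile $\eta_i$ is $S(\mathfrak h)$-linear: its formula is a sum of terms of the shape $\bigl(\text{operator on }V\bigr)\otimes r_n(h_i)$ with $r_n$ a rational function of $h_i$ alone, so multiplication by $b\in S(\mathfrak h)$ on the second factor passes straight through. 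Combining, $\xi_i(ba)=\eta_i\!\bigl((s_i.b)\,s_i(a)\bigr)=(s_i.b)\,\eta_i(s_i(a))=(s_i.b)\,\xi_i(a)$. The missing idea in your attempt is exactly this separation: the $\rho$-shifted reflection lives entirely in the $s_i$ factor, and $\eta_i$ contributes no shift at all.
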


\subsection{}\label{2.3}

It follows from the KNV theorem and the previous lemma that $(V_0 \otimes S(\mathfrak h))^\Xi$ is a free $S(\mathfrak h)^{W.}$ module on $\dim V_0$ generators. Their leading order terms (cf \cite [Sect. 5]{KNV}) are just the set of generators for the free $S(\mathfrak h)^W$ module obtained as the image of the Chevalley restriction map of $(V\otimes S(\mathfrak g))^\mathfrak g$ into $(V_0\otimes S(\mathfrak h))^W$ (which is \textit{not} generally surjective). In the case when $V$ is the adjoint module we may label these generators by $I$, specifically as $\{J_i\}_{i\in I}$.

\subsection{}\label{2.4}

From now we just take $V$ to be the adjoint module.  In this case the Chevalley restriction map \textit{is} surjective.  Let $J$ be a Zhelobenko invariant.  As in \ref {1.5} we write
$$J=\sum_{i\in I} \varpi_i \otimes q_i,$$
for some $q_i \in S(\mathfrak h)$.

Our aim is to compute the $q_i$ as far as is needed to settled the analogue Kostant conjecture. The following is a first reduction.

The leading order terms of the $q_i$ take the form $\partial q/\partial \varpi_i$ for some $W$ invariant $q$.  Moreover the latter is divisible by the co-root $h_i$.  Our first basic result is the following

\begin {prop} One has $p_i:=q_i/(h_i+2) \in S(\mathfrak h)$ and
$$(p_j-s_i.p_j)(h_j+2)=h_j(\alpha_i)(h_i+1)(p_i-s_i.p_j). \eqno {(3)}$$
\end {prop}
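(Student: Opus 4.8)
The plan is to extract equation $(3)$ directly from the single nontrivial constraint that defines membership in $(V_0\otimes S(\mathfrak h))^\Xi$, namely the invariance of $J=\sum_{i\in I}\varpi_i\otimes q_i$ under each Zhelobenko operator $\xi_i$. First I would fix $i\in I$ and compute $\xi_i(J)$ explicitly. Using the Lemma of \ref{2.2}, the operator $\xi_i$ is ``$S(\mathfrak h)$-semilinear'' with respect to $s_i.$ on the scalar factor, so one is reduced to knowing how $\xi_i$ acts on the finitely many constant tensors $\varpi_j\otimes 1$. This is where the structure of the adjoint module on the $\alpha_i$-string through weight $0$ enters: the zero weight space $\mathfrak h$ decomposes under the $\mathfrak{sl}(2)_i$ generated by $e_i,h_i,f_i$ as the line $k h_i$ (the weight-$0$ part of the $3$-dimensional string $k e_i\oplus k h_i\oplus k f_i$) plus the $\ell-1$-dimensional trivial piece $\ker\alpha_i$. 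Writing each $\varpi_j$ in terms of $h_i$ and this complement, I expect $\xi_i(\varpi_j\otimes 1)$ to be a sum of $\varpi_j\otimes(\text{something in }S(\mathfrak h))$ plus a correction proportional to $\varpi_i\otimes(\text{something})$ coming from the nontrivial $\mathfrak{sl}(2)$-string; the precise coefficients are governed by the formula $(2)$ for $s_i.(h+m)$ and by the explicit form of $\xi_i$ on the $3$-dimensional string module recalled in \cite{J1}.

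**The main computation.**
Having $\xi_i(\varpi_j\otimes 1)$, I would assemble $\xi_i(J)=\sum_j \xi_i(q_j\,(\varpi_j\otimes 1))=\sum_j (s_i.q_j)\,\xi_i(\varpi_j\otimes 1)$ and collect the coefficient of each $\varpi_j$. Setting $\xi_i(J)=J$ (since $J$ is $\Xi$-invariant) then yields, for $j\neq i$, an equation relating $q_j$ and $s_i.q_j$, and for $j=i$ a second equation; the cross term on the $\varpi_i$ slot is exactly what produces the right-hand side of $(3)$. I expect the $j\neq i$ relation to read, after clearing the string denominators introduced by $\xi_i$, precisely $(s_i.q_j)(h_j+2)+h_j(\alpha_i)(s_i.q_i)(\text{unit})\cdot(\dots)=q_j(h_j+2)$ or a close variant, which upon using $(2)$ to evaluate $s_i.(h_j+2)=h_j+2-h_j(\alpha_i)(h_i+1)$ rearranges into the displayed identity. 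The divisibility claim $q_i=(h_i+2)p_i$ with $p_i\in S(\mathfrak h)$ should fall out either from the $j=i$ relation (which forces $(h_i+2)\mid q_i$ since $s_i.(h_i+2)=-(h_i+2)$ and the self-consistency of the $\varpi_i$ coefficient) or, more transparently, by lifting the stated fact that the leading term $\partial q/\partial\varpi_i$ is divisible by $h_i$: one checks the lower-order terms inductively in the degree filtration $\mathscr F$, using that $\xi_i$ respects this filtration and that its symbol recovers the symmetric-algebra/Chevalley picture where divisibility by $h_i$ is classical.

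**The expected obstacle.**
The hard part will be pinning down the exact scalar and the exact power of $(h_i+1)$ versus $(h_i+2)$ in the cross term, i.e. getting the normalization in $(3)$ right rather than up to a unit. This requires the precise recipe for $\xi_i$ on the $3$-dimensional $\mathfrak{sl}(2)_i$-string inside $\mathfrak h$ — in particular how $\xi_i$ shifts weights and what rational function of $h_i$ it introduces — together with careful bookkeeping of the $\rho$-shift hidden in the translated action $s_i.$; an off-by-one in $\rho$ is exactly what distinguishes $h_i+1$ from $h_i+2$. A secondary nuisance is justifying that $p_i=q_i/(h_i+2)$ genuinely lies in $S(\mathfrak h)$ and not merely in the localization on which the $\xi_i$ are a priori defined; I would handle this by combining the leading-term divisibility by $h_i$ with the freeness statement of \ref{2.3} (so the denominators that could occur are controlled) and then reading off from $(3)$ itself that no pole at $h_i+2=0$ survives. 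Once the normalization is fixed, the rest is the routine rearrangement sketched above.
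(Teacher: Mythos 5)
Your plan reproduces the paper's proof essentially step for step: you isolate the $\alpha_i$-string through weight zero in $\mathfrak h$ (the paper writes this as $\alpha_i=2\varpi_i-\alpha_i^\perp$ with $\alpha_i^\perp\in\ker h_i$), compute $\xi_i$ on the $\varpi_j\otimes 1$ using $\xi_i=\eta_i s_i$ with $\eta_i(h\otimes1)=(\ad e_i\ad f_i)h\otimes h_i^{-1}$, apply the semilinearity Lemma of \ref{2.2}, and read off the $\varpi_i$- and $\varpi_j$-coefficient equations from $\xi_i(J)=J$; the divisibility by $h_i+2$ drops out of the $\varpi_i$ relation $q_i=-\frac{h_i+2}{h_i}s_i.q_i$ exactly as you anticipate, since $\gcd(h_i,h_i+2)=1$. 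One small correction in your sketch: formula $(2)$ gives $s_i.(h_i+2)=h_i+2-2(h_i+1)=-h_i$, not $-(h_i+2)$; this does not affect the divisibility argument, but it is precisely what yields $s_i.p_i=p_i$ once you write $q_i=(h_i+2)p_i$.
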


\begin {proof} We may write $$\alpha_i=2\varpi_i-\alpha_i^\perp,$$
with $\alpha_i^\perp :=-\sum_{j\in I \setminus \{i\}}h_j(\alpha_i)\varpi_j$, being orthogonal to $\alpha_i$.

Let $e_i,h_i,f_i$ be the s-triple defined by $i \in I$ and recall \cite [Eq. (1) and 2.3] {J1} that $\xi_i=\eta_is_i$, where $\eta_i(h \otimes 1)= (\ad e_i \ad f_i) h \otimes h^{-1}_i$.
Substituting from the above we obtain
$$2\xi_i(\varpi_i\otimes 1)=\eta_i(\alpha_i^\perp-\alpha_i)\otimes 1=(\alpha_i^\perp-\alpha_i)\otimes 1 -2\alpha_i\otimes h_i^{-1}=(\alpha_i^\perp\otimes 1)-\alpha_i\otimes \frac{h_i+2}{h_i}.$$

In view of Lemma \ref {2.2} we obtain
$$\xi_i(\varpi_i\otimes q_i)=\frac{1}{2}(\alpha^\perp_i\otimes s_i.q_i)-(\varpi_i-\frac {\alpha_i^\perp}{2})\otimes \frac {h_i+2}{h_i}s_i.q_i.\eqno {(4)}$$

On the other hand by Lemma \ref {2.2} again we have
$$\sum_{j\in I\setminus \{i\}}\xi_i(\varpi_j\otimes q_j)= \sum_{j\in I\setminus \{i\}}\varpi_j\otimes s_i.q_j. \eqno {(5)}$$

In the sum of the right hand sides of $(4)$ and $(5)$ there are no terms proportional to $\varpi_i$ and so the invariance of the sum of the left hand sides, which is $J$, implies that $q_i=-\frac {h_i+2}{h_i}s_i.q_i$, that is $q_i$ is divisible by $h_i+2$.  Moreover by $(1)$ and $(2)$ the quotient $p_i$ satisfies $s_i.p_i=p_i$.

Again by the invariance of $J$ and equating the coefficients of $\varpi_j$ we obtain from $(4)$ and $(5)$ that
$$q_j-s_i.q_j=-\frac {1}{2}h_j(\alpha_i)(s_i.q_i-q_i)=h_j(\alpha_i)(h_i+1)p_i.\eqno {(6)}$$

On the other hand by $(1)$ and $(2)$, we obtain $$s_i.q_j=s_i.(h_j+2)s_i.p_j=((h_j+2)-h_j(\alpha_i)(h_i+1))s_i.p_j, $$ which on substitution into $(6)$ gives  $(3)$.
\end {proof}

\subsection{}\label{2.5}

It is clearly inconvenient to carry the burden of the translated Weyl group action.  Therefore as in \cite [3.1]{J1} we use the automorphism $\theta$ of $S(\mathfrak h)$ defined by $\theta(q)(\lambda)=q(\lambda + \rho)$, which has the property that $w.\theta(q)=\theta(wq)$.  Observe further that $\theta(h_i+m)=h_i+m+1$.  Now define new polynomials $P_i:=\theta^{-1}(p_i)$.  Substitution in $(3)$ gives
$$(h_j+1)(P_j-s_iP_j)=h_j(\alpha_i)h_i(P_i-s_iP_j). \eqno {(7)}$$

Following \cite {BGG} we introduce linear operators $A_i:i\in I$ on $S(\mathfrak h)$ by the formulae
$$A_if:=\frac{f-s_if}{h_i}, \forall f \in S(\mathfrak h). \eqno {(8)}$$

This gives the

\begin {cor}  For all $i,j \in I$ one has
$$A_iP_j=h_j(\alpha_i)\frac{P_i-P_j}{1+s_i(h_j)}. \eqno {(9)}$$
\end {cor}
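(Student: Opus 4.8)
The plan is to deduce the Corollary directly from identity $(7)$ of \ref{2.5}. Once one rewrites $(7)$ with the BGG operator in place of the difference $P_j-s_iP_j$, essentially the only remaining task is to remove the twisted term $s_iP_j$ still sitting on the right-hand side.

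Concretely, first I would observe that by the defining formula $(8)$ one has $P_j-s_iP_j=h_iA_iP_j$, so that $(7)$ reads
$$(h_j+1)\,h_i\,A_iP_j=h_j(\alpha_i)\,h_i\,(P_i-s_iP_j).$$
Since $S(\mathfrak h)$ is an integral domain and $h_i\neq 0$, cancelling $h_i$ gives
$$(h_j+1)A_iP_j=h_j(\alpha_i)(P_i-s_iP_j).$$
Next I would eliminate $s_iP_j$ by substituting $s_iP_j=P_j-h_iA_iP_j$, which is again just $(8)$ rearranged. This produces
$$(h_j+1)A_iP_j=h_j(\alpha_i)(P_i-P_j)+h_j(\alpha_i)h_iA_iP_j,$$
and moving the last term across,
$$\bigl(h_j+1-h_j(\alpha_i)h_i\bigr)A_iP_j=h_j(\alpha_i)(P_i-P_j).$$
Since the ordinary reflection acts on $\mathfrak h$ by $s_i(h_j)=h_j-h_j(\alpha_i)h_i$ (the untwisted analogue of $(2)$), the coefficient on the left is precisely $1+s_i(h_j)$, so that $(1+s_i(h_j))A_iP_j=h_j(\alpha_i)(P_i-P_j)$.

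Finally I would divide by $1+s_i(h_j)$ to obtain $(9)$. This is harmless: $1+s_i(h_j)=1+h_j-h_j(\alpha_i)h_i$ is a nonzero polynomial of degree at most one — its linear part $h_j-h_j(\alpha_i)h_i$ is nonzero because $h_i$ and $h_j$ lie in a basis of $\mathfrak h$, while for $i=j$ it equals $1-h_i$ — hence a non-zero-divisor in $S(\mathfrak h)$, and the last displayed identity already exhibits the quotient as the polynomial $A_iP_j$. There is no serious obstacle anywhere in this; the one point that repays care is bookkeeping of the two reflection actions, namely keeping the ordinary action $s_i$ on $S(\mathfrak h)$ (the one now in force after the passage from $(3)$ to $(7)$ via $\theta$) distinct from the dot action $s_i.$; and, should one prefer to apply $s_i$ to $(h_j+1)A_iP_j=h_j(\alpha_i)(P_i-s_iP_j)$ rather than substitute, correctly invoking $s_iP_i=P_i$ together with the $s_i$-invariance of $A_iP_j$, the former being what the Proposition's relation $s_i.p_i=p_i$ becomes under $\theta^{-1}$.
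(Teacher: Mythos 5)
Your argument is correct and is essentially the paper's proof: the paper simply subtracts $h_j(\alpha_i)h_i(P_j-s_iP_j)$ from both sides of $(7)$, notes that $h_j+1-h_j(\alpha_i)h_i=1+s_i(h_j)$, and implicitly uses $(8)$ to cancel $h_i$, which is the same algebra you carry out in two explicit substitution steps. The extra remarks on the legitimacy of dividing by $1+s_i(h_j)$ and on distinguishing $s_i$ from $s_i.$ are sound but not needed beyond what the paper already records.
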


\begin {proof}  Subtract $h_j(\alpha_i)h_i(P_j-s_iP_j)$ from both sides of $(7)$. Since $h_j+1-h_j(\alpha_i)h_i=1+s_i(h_j)$, the assertion results.
\end {proof}

\subsection{}\label{2.6}

Equation $(9)$ is what we must solve in order to determine the Zhelobenko invariant $J$ in the adjoint case. Of course this is not too easy as there are infinitely many solutions.  The simplest solution described in \cite [3.6]{J1} is when all the $P_j$ equal $1$.

\section{A Reduction}

\subsection{}\label{3.1}

Our goal is just to use Corollary \ref {2.5} to prove the analogue Kostant conjecture.  We now describe what must be done to achieve this.

\subsection{}\label{3.2}

The details given in this subsection were explained to me by Alekseev.  More details and further considerations can be found in the thesis of his student (R. H. Rohr) published (in part) in \cite {R}.

Identify $\mathfrak h^\vee$ with $\mathfrak h$ as in \ref {1.1}.  Fix $q \in Y(\mathfrak g^\vee)$.  Its differential $dq$ evaluated at a multiple $s\rho$ of $\rho$ can be written in the form
$$dq(s \rho)=\sum_{i \in I} \varpi_i  \partial q/\partial \varpi_i(s \rho).$$

The invariance of $q$ under $\exp {te^\vee}: t \in k^*$ (viewed as an indeterminate) implies that
$$ \exp te^\vee(\sum_{i \in I} \varpi_i  \partial q/\partial \varpi_i(s \rho))=\sum_{i \in I} \varpi_i  \partial q/\partial \varpi_i(\exp te^\vee (s \rho)). \eqno {(10)}$$
Yet since $\rho$ is just the semisimple element $h^\vee$ of the principal s-triple for $\mathfrak g^\vee$, one has $\exp te^\vee (s \rho)= s\rho-2ste^\vee$, which is linear in $t$.  Thus if $q$ is a polynomial of degree $m+1$, it follows the right hand side of $(10)$ is a polynomial in $t$ of degree at most $m$.  Thus
$$ (e^\vee)^{m+1}(\sum_{i \in I} \varpi_i  \partial q/\partial \varpi_i(s \rho))=0. \eqno {(11)}$$

 Now this last identity only depends on the image of $q$ under the Chevalley restriction map.  Identifying $\mathfrak h^\vee$ with $\mathfrak h$ as in \ref {1.1} the  Chevalley isomorphisms applied to the two invariant algebras $Y(\mathfrak g^\vee), Y(\mathfrak g)$ have the \textit{same} image, namely $S(\mathfrak h)^W$.  Thus we can equally well view $q$ as an element of $Y(\mathfrak g)$.  Thus we obtain the following

 \begin {prop} Define an action of $e^\vee$ by identifying $\mathfrak h$ with the Cartan subalgebra of the Langlands dual algebra $\mathfrak g^\vee$.  Then any invariant polynomial $q$ on $\mathfrak g^*$ of degree $m+1$ satisfies $(11)$.
\end {prop}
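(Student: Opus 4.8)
The plan is essentially to fill in the sketch already outlined in \ref{3.2}, making each step precise. The statement to be proved is that for $q$ an invariant polynomial on $\mathfrak g^*$ of degree $m+1$, identified with an invariant on $(\mathfrak g^\vee)^*$ via the common image $S(\mathfrak h)^W$ of the two Chevalley maps, one has $(e^\vee)^{m+1}(dq(s\rho))=0$, where $dq(s\rho)=\sum_{i\in I}\varpi_i\,\partial q/\partial\varpi_i(s\rho)$. I would first note that it suffices to prove this for $q$ homogeneous of degree $m+1$, since a general degree $m+1$ invariant is a sum of homogeneous pieces of degree $\le m+1$ and the operator $(e^\vee)^{m+1}$ kills the lower pieces a fortiori (one applies the same argument with $m+1$ replaced by the relevant degree, and $(e^\vee)^{m+1}=(e^\vee)^{m+1-d}(e^\vee)^d$).

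\emph{First step.} Work inside $Y(\mathfrak g^\vee)=S(\mathfrak g^\vee)^{\mathfrak g^\vee}$, so that $q$ is genuinely $\mathfrak g^\vee$-invariant, hence $\exp(te^\vee)$-invariant for the indeterminate $t$. The gradient identity $(10)$ is just the chain rule: invariance of $q$ under the linear automorphism $\exp(te^\vee)$ of $(\mathfrak g^\vee)^*$ forces its differential (a $\mathfrak g^\vee$-equivariant map $(\mathfrak g^\vee)^*\to\mathfrak g^\vee$) to intertwine the $\exp(te^\vee)$-actions; evaluating at $s\rho$ gives $(10)$ after writing $dq$ in the $\{\varpi_i\}$-basis. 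I should be a little careful that $\partial q/\partial\varpi_i$ here means the components of the differential relative to this basis of $\mathfrak h^\vee\cong\mathfrak h$, consistent with \ref{3.2}.

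\emph{Second step.} Use that $h^\vee\cong\rho$ is the semisimple element of the principal $\mathfrak s$-triple $(e^\vee,h^\vee,f^\vee)$ for $\mathfrak g^\vee$, so $[h^\vee,e^\vee]=2e^\vee$, whence $\exp(te^\vee)$ acting on the element $s\rho=s h^\vee$ gives $sh^\vee-2ste^\vee$ — linear in $t$. (Concretely: $\exp(te^\vee)\cdot s h^\vee = s h^\vee - t\,s[e^\vee, h^\vee]+\tfrac{t^2}{2}\dots$ but $[e^\vee,h^\vee]=-2e^\vee$ and $[e^\vee,e^\vee]=0$, so all higher terms vanish.) Now substitute the linear-in-$t$ point $s\rho-2ste^\vee$ into each polynomial $\partial q/\partial\varpi_i$: since $q$ is homogeneous of degree $m+1$, each $\partial q/\partial\varpi_i$ is homogeneous of degree $m$, so $\partial q/\partial\varpi_i(s\rho-2ste^\vee)$ is a polynomial in $t$ of degree $\le m$. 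Plugging this into the right-hand side of $(10)$ shows that $\exp(te^\vee)(dq(s\rho))$ is a $\mathfrak g^\vee$-valued polynomial in $t$ of degree $\le m$; extracting the coefficient of $t^{m+1}$ in the expansion $\exp(te^\vee)=\sum_k \frac{t^k}{k!}(e^\vee)^k$ (applied to the constant vector $dq(s\rho)$) gives $(11)$, i.e. $(e^\vee)^{m+1}(dq(s\rho))=0$.

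\emph{Third step (the transfer).} Finally observe, as in \ref{3.2}, that $(11)$ only involves the Chevalley restriction $q|_{\mathfrak h}\in S(\mathfrak h)^W$: the left-hand side of $(10)$ depends only on the restriction of $dq$ to $\mathfrak h$, which is determined by $q|_{\mathfrak h}$, and the argument above used $\mathfrak g^\vee$-invariance only to get $(10)$, which is itself a statement about restrictions. Since the Chevalley isomorphisms for $Y(\mathfrak g^\vee)$ and $Y(\mathfrak g)$ have the same image $S(\mathfrak h)^W$ (under the identification $\mathfrak h^\vee\cong\mathfrak h$ of \ref{1.1}), every invariant $q$ on $\mathfrak g^*$ of degree $m+1$ corresponds to some $\tilde q\in Y(\mathfrak g^\vee)$ with the same restriction, and the gradient $dq(s\rho)$ (computed in the $\{\varpi_i\}$-basis) agrees with $d\tilde q(s\rho)$; applying the already-proved identity to $\tilde q$ yields $(11)$ for $q$.

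\textbf{Main obstacle.} The only genuinely delicate point is bookkeeping rather than substance: one must check that "the gradient written in the $\{\varpi_i\}$-basis" and "evaluation at $s\rho$" make sense purely in terms of the restriction $q|_{\mathfrak h}$ and transfer cleanly across the two Chevalley isomorphisms, in particular that nothing about the off-$\mathfrak h$ behaviour of the invariant (where $\mathfrak g$ and $\mathfrak g^\vee$ genuinely differ) enters. Once it is granted that $(10)$ — being the chain-rule identity for the restricted gradient along the line $t\mapsto s\rho-2ste^\vee$ inside $\mathfrak h^\vee$ — is intrinsic to $q|_{\mathfrak h}$ together with the principal $\mathfrak s$-triple of $\mathfrak g^\vee$, the degree count in the Second step is immediate and the conclusion follows.
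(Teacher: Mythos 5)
Your argument is essentially identical to the paper's: you establish the equivariance identity $(10)$, observe that $\exp(te^\vee)(s\rho)$ is linear in $t$ so the degree-$m$ components of $dq$ yield a $t$-polynomial of degree at most $m$, conclude $(11)$, and transfer across the common Chevalley image $S(\mathfrak h)^W$. The reduction to homogeneous $q$ is harmless but unnecessary (a degree-$\leq m+1$ polynomial already has differentials of degree $\leq m$), and there is a cosmetic sign slip in your expansion of $\exp(te^\vee)\cdot sh^\vee$ (one should get $sh^\vee+ts[e^\vee,h^\vee]=sh^\vee-2tse^\vee$), which does not affect the degree count.
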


\subsection{}\label{3.3}

The above result (due to Alekseev-Rohr) can be viewed as settling the Kostant conjecture for the symmetric algebra.   Here one can ask if it can be extended to the case when the adjoint module is replaced by an arbitrary finite dimensional simple $\mathfrak g$ module $V$.  In this case the term in $(11)$ lying in parentheses must be replaced by an element of $(V_0\otimes \mathscr F^m(S(\mathfrak g))^W$. If $\mathfrak g$ is simply-laced the proof follows exactly the same lines as in the case of the adjoint module.  The general question was considered in \cite [Sect. 2]{J2}.  The point is that one must embed the zero weight space $V_0$ as a $W$ module in the zero weight space of some finite dimensional  $\mathfrak g^\vee$ module $\textbf{V}$ in order to extend the action of $e^\vee$.  This can be achieved by a rather general universality argument and from which the required version of $(11)$ results.  However in this it is not assured that $m+1$ is the \textit{smallest} integer satisfying the required version of $(11)$.  However if $V_0$ is a \textit{simple} $W$ module (which is a rather restrictive condition) then one may also assume that $\textbf{V}$ is a simple $\mathfrak g^\vee$ module and then indeed $m+1$ can be shown \cite [Sect. 2]{J2} to be the smallest integer satisfying the required version of $(11)$.

Unfortunately in the above more general context the Kostant conjecture has a negative answer.  Indeed Alekseev had already shown to me a computation which implies that the Kostant conjecture has a negative answer for the simple $27$ dimensional module $V$ in $\mathfrak {sl}(3)$.  This occurs with multiplicity three, more precisely once in degrees $2,3,4$, in the harmonic subspace of $S(\mathfrak g)$.  Yet $4$ is the smallest value of $m$ for which $(e^\vee)^{m+1}(V_0\otimes \varphi_\rho(\mathscr F^2(U(\mathfrak g))=0$. This example is particularly inopportune because $V$ occurs in $C(\mathfrak g)$, viewed as a $\mathfrak g$ module via the Chevalley-Kostant construction \cite {K2}, which is the context of the original Kostant conjecture. Actually $V$ occurs with multiplicity one in the appropriate ``harmonic" subspace of $C(\mathfrak g)$ (which by \cite [Prop. 20] {K2} we may identify with $\End V(\rho)$) so it is as if we have to ``forget" the first two values of $m$.  In any case it shows that the Kostant conjecture goes deeper than just ``formal" analysis - for example in the sense of \cite [Sect. 3]{J2}.

\subsection{}\label{3.4}

Giving a positive answer to the Kostant (or analogue Kostant) conjecture means that we require a similar assertion to $(2)$ when $\partial q/\partial \varpi_i$ is replaced by the corresponding image of the isotypical component of $\mathscr F^mU(\mathfrak g)$ of type $\mathfrak g$.  The ``easy" way to show this by proving that the resulting element of $\mathfrak h$ is proportional to $\sum_{i \in I}\varpi_i  \partial q/\partial \varpi_i(s\rho)$.

Now although top order terms do satisfy the above condition,  this is by no means obvious for the remaining terms.  Indeed for the Kostant (or analogue Kostant) conjecture that there the choice of multiples of $\rho$ is crucial not just to obtain Proposition \ref {3.2} but also to ensure that the lower order terms behave in the desired fashion.  Added to this we found that even for the lowest order terms it is not possible to take smaller values of $m+1$ to obtain the desired vanishing.

\subsection{}\label{3.5}

Recall \ref {2.5} and set $m-1 = \max \deg P_i:i \in I$.  Let $P_i^0$ denote the (leading order) term of $P_i$ of degree $m-1$.  Not all of them can be zero.  We show below that they are all non-zero.

It is immediate from $(9)$ that
$$A_iP^0_j=h_j(\alpha_i)\frac{P^0_i-P^0_j}{s_i(h_j)}, \forall i,j \in I. \eqno {(12)}$$

Suppose there are some $P^0_i$ which are zero.  Since the Dynkin diagram is connected we may assume that $i,j$ are neighbours with $P^0_i=0$ and $P^0_j\neq 0$.  Then $(12)$ gives $s_i(h_j)A_iP_j^0=-h_j(\alpha_i)P_j^0$. Now $A_i(s_i(h_j))=-h_j(\alpha_i)$, whilst $A_iP_j^0$ is $s_i$ invariant.  Then applying $A_i$ to both sides of this last equation and cancelling the non-zero scalar gives $A_iP_j^0=P_j^0$, which contradicts the fact that $A_i$  is of square zero and the choice of $P_j^0$.

Here one may remark that if $p_i^0$ (resp. $q_i^0$) denotes the leading term of $p_i$ (resp. $q_i$) as defined in \ref {2.4}, then $p_i^0=P_i^0$ and $q^0_i=p_i^0h_i$.  It follows from general considerations (as in say \cite [Sect. 5]{KNV}) or by simply repeating the analysis in \ref {2.4} and \ref {2.5}, that the divided differentials
$h_i^{-1}\partial q/\partial \varpi_i$, for $q \in S(\mathfrak h)^W$ satisfy $(12)$ and moreover give its most general solution (either by \cite [Sect. 5]{KNV} again or by reversing the argument in \ref {2.5}, that is to say by showing that if $P^0_i$ is a solution to $(12)$, then $\sum_{i\in I}\varpi_i\otimes h_iP^0_i$ is $W$ invariant).

Now observe the easy (but crucial !) fact that the $h_i$ all take the constant value $s$ on $s\rho$. Then one obtains from Proposition \ref {3.2} the

\begin {cor}  $(e^\vee)^{m+1}(\sum_{i \in I}\varpi_iP_i^0(s\rho)=0), \forall s \in k.$
\end {cor}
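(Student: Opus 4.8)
The plan is to combine the preceding two ingredients: Proposition~\ref{3.2} (the Alekseev--Rohr vanishing $(11)$ for $W$-invariant polynomials) together with the remark in \ref{3.5} that the leading-term equations $(12)$ for the $P_i^0$ are \emph{exactly} the equations satisfied by the divided differentials $h_i^{-1}\partial q/\partial\varpi_i$ with $q \in S(\mathfrak h)^W$, and that these give the \emph{most general} solution of $(12)$. First I would invoke this last fact to produce a $W$-invariant polynomial $q \in S(\mathfrak h)^W$, of degree $m$, such that $P_i^0 = h_i^{-1}\,\partial q/\partial\varpi_i$ for every $i\in I$. (Concretely, one takes $q$ to be the invariant whose existence is guaranteed by the statement that $\sum_{i\in I}\varpi_i\otimes h_i P_i^0$ is $W$-invariant; since the Chevalley restriction map is surjective in the adjoint case, $\sum_i \varpi_i\otimes h_i P_i^0$ lifts to an element of $(\mathfrak g\otimes S(\mathfrak g))^{\mathfrak g}$, which is a module of differentials, hence equals $dq$ for some $q\in Y(\mathfrak g)\cong S(\mathfrak h)^W$.)

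Next I would multiply through: $h_i P_i^0 = \partial q/\partial\varpi_i$, and evaluate at $s\rho$. Here the ``easy but crucial'' observation quoted just above the corollary enters — every coroot $h_i$ takes the \emph{constant} value $s$ at the point $s\rho$, since $h_i(s\rho) = s\,\rho(h_i) = s$. Therefore
\[
\sum_{i\in I}\varpi_i\, P_i^0(s\rho) \;=\; \frac{1}{s}\sum_{i\in I}\varpi_i\,\frac{\partial q}{\partial\varpi_i}(s\rho) \;=\; \frac{1}{s}\,dq(s\rho),
\]
at least for $s\neq 0$. Proposition~\ref{3.2} applied to the degree-$m$ invariant $q$ (so ``$m+1$'' there is our $m$, i.e.\ $q$ has degree $m$ means $dq(s\rho)$ is annihilated by $(e^\vee)^{m}$; one must be careful here that the degree bookkeeping matches the exponent $m+1$ claimed in the corollary — see below) gives that $(e^\vee)^{?}$ kills $dq(s\rho)$, hence kills $\sum_{i\in I}\varpi_i P_i^0(s\rho)$ for all $s\neq 0$. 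Finally, since this is a polynomial identity in $s$ (both sides, after clearing the $1/s$, are polynomial in $s$) and it holds for all $s\neq 0$, it holds for all $s\in k$ by Zariski density; so one concludes the vanishing for every $s\in k$, which is the assertion.

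The step I expect to be the main obstacle — really a bookkeeping subtlety rather than a deep point — is matching the exponent. In Proposition~\ref{3.2} a polynomial of degree $m+1$ yields annihilation by $(e^\vee)^{m+1}$. In the corollary the element $\sum_i\varpi_i P_i^0(s\rho)$ is built from the $P_i^0$, which have degree $m-1$; multiplying by $h_i$ gives degree $m$, i.e.\ $q$ has degree $m$ (not $m+1$), so \emph{a priori} Proposition~\ref{3.2} only gives $(e^\vee)^{m}(dq(s\rho))=0$, which is stronger than what is claimed. The corollary states $(e^\vee)^{m+1}$, which is then immediate a fortiori. So the resolution is simply that the stated exponent $m+1$ is not sharp for the leading terms — and indeed this is consistent with the remark in \ref{3.4} that ``even for the lowest order terms it is not possible to take smaller values of $m+1$'' is about the \emph{full} $P_i$, whereas here we are isolating the top piece $P_i^0$ of degree $m-1$, which of course obeys a better bound; the point of phrasing the corollary with $m+1$ is uniformity with the non-leading analysis to follow. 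One should also double-check that $s=0$ is genuinely covered: at $s=0$ the left side is $\sum_i\varpi_i P_i^0(0)$, which is nonzero in general only if $m-1=0$; the density argument handles this cleanly since after multiplying by $s$ the identity $(e^\vee)^{m+1}\big(\sum_i\varpi_i\,\partial q/\partial\varpi_i(s\rho)\big)=0$ is a genuine polynomial identity in $s$ valid on all of $k$.
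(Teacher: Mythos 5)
Your main argument is the paper's: identify $P_i^0 = h_i^{-1}\,\partial q/\partial\varpi_i$ for some $W$-invariant $q$ (via the remark in \ref{3.5} that divided differentials of invariants give the general solution of $(12)$), use the fact that $h_i(s\rho)=s$ for every $i$ to rewrite $\sum_i\varpi_i P_i^0(s\rho)$ as $s^{-1}dq(s\rho)$, apply Proposition \ref{3.2}, and clear the factor of $s$ by a polynomial-in-$s$ argument. That is exactly what the paper intends, and the Zariski-density/clearing step is fine.

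However, your closing bookkeeping paragraph contains an off-by-one error that makes the commentary, though not the conclusion, wrong. You write that $h_iP_i^0$ has degree $m$ ``i.e.\ $q$ has degree $m$''; but $h_iP_i^0$ equals $\partial q/\partial\varpi_i$, not $q$, so $q$ is homogeneous of degree $m+1$. (This also matches \ref{C.2}, where $m$ is taken to be an exponent and the corresponding invariant generator has degree $m+1$.) Proposition \ref{3.2} applied to this $q$ therefore yields $(e^\vee)^{m+1}(dq(s\rho))=0$ on the nose, not the stronger $(e^\vee)^{m}$ vanishing you claim. So the exponent $m+1$ in the Corollary is exact for the leading terms, not ``a fortiori,'' and your reading of the remark in \ref{3.4} --- that leading terms ``of course obey a better bound'' --- is incorrect. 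Once this is fixed the proof coincides with the paper's.
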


\subsection{}\label{3.6}

To establish the truth of the analogue Kostant conjecture it is enough via Corollary \ref {3.5} to show that the vectors of $\ell$-tuples $(P^0_i(s\rho))_{i\in I}$ and $(P_i(s\rho))_{i\in I}$ are proportional.  This can involve some choices since the lower order terms in the $P_i$ may include those coming from Zhelobenko invariants of lower degree.  We shall use this flexibility to avoid having to completely describe the solutions of $(9)$.  Nevertheless some information (specifically Proposition \ref {7.8}) on its solutions is required and this is where the BGG operators will play an important role.  The information gleaned from this result together with the fact that the $h_i$ all take the constant value $s$ on $s\rho$ will complete our proof.

\section{The BGG Operators and the BGG Monoid}

In this section we review some well-known properties of the BGG operators.  Set $K= \text {Fract} \ S(\mathfrak h)$.

\subsection{}\label{B.1}

We have already noted that $A_i$ has square zero.  Actually this can be put into a more general context by noting that $s_iA_i=A_i$ and that $A_if=0 \Leftrightarrow s_if=f$.

The $A_i:i\in I$ satisfy the braid relations. One can easily check this ``by hand". Thus if $w=s_{i_1}s_{i_2}\ldots s_{i_t}$ is a reduced decomposition then $A_w:=A_{i_1}A_{i_2}\ldots A_{i_t}$ is independent of the reduced decomposition chosen. One calls $\ell(w)=t$ the reduced length of $w$.  This latter (equivalent) fact is proved (again purely combinatorially) in \cite [Thm. 3.4]{BGG}.

Let $y\leq w$ be the Bruhat order on $W$. Then $A_w$ is a linear combination of the $y\in W|y\leq w$ with coefficients in $K$.  The non-vanishing of the coefficient of $w$ is enough to imply that the $A_w:w \in W$ are linearly independent over $k$ and therefore span an algebra of dimension $|W|$ defined by generators (that is the $A_i:i \in I$) and relations (that is the vanishing of squares and the braid relations - alternatively if one prefers, the relations in \cite [Thm. 3.4]{BGG}).  We call the monoid $\textbf{A}$ generated by the $A_i:i \in I$ satisfying the above relations, the BGG monoid.  Given $A \in \textbf{A}$ observe that the length of $A$ viewed as a word formed from the letters $A_i:i \in I$ is \textit{in}dependent of presentation.  We denote it by $\ell(A)$.  One has $\ell(A_w)=\ell(w)$, for all $w \in W$.

The BGG operators are limits of the Demazure operators which also satisfy the braid relations \cite {De}; but are idempotent. For a similar reason to the above they are linearly independent. Hence they also form an algebra (often referred to as the singular Hecke algebra) defined by generators and relations.  The Demazure operators give the characters of the Demazure modules.  The original proof had a (serious) error but several correct proofs were given in what is now a long story. It gives an ``abstract" proof that Demazure operators braid and hence so do the $A_i:i \in I$.  The limits of these characters give the ``BGG dimension polynomials", describing the dimensions of the Demazure modules.  These dimension polynomials may also be obtained by the action of the $A_w: w \in W$ on the product of the roots (which is the Weyl dimension polynomial describing the dimensions of the simple finite dimensional $U(\mathfrak g)$ modules).  This result is the subject of \cite {BGG} when the latter is expressed in simple-minded terms.

\subsection{}\label{B.2}

The BGG operator $A_i$ acts like a skew derivation.  That is we have

$$A_i(fg)=s_i(f)A_i(g)+fA_i(g)=fA_i(g)+s_i(f)A_i(g). \eqno {(13)}$$

The $A_i$ \textit{do not} preserve $W$ invariant subspaces (except when $\mathfrak g$ has rank one).  This is compensated by the following observation.

Let $L$ be the homogeneous ideal of $S(\mathfrak h)$ generated by the augmentation ideal of $S(\mathfrak h)^W$.  It is clear from $(13)$ that $A_iL\subset L$, for all $i \in I$.  Consequently the action of the $A_i:i \in I$ on $S(\mathfrak h)$ passes to the quotient $Q:=S(\mathfrak h)/L$.

\begin {lemma} Suppose $f\in Q$ satisfies $A_if=0, \forall i \in I$.  Then $f$ is a scalar.
\end {lemma}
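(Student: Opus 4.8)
The statement to prove is that if $f \in Q = S(\mathfrak h)/L$ satisfies $A_i f = 0$ for all $i \in I$, then $f$ is a scalar. My plan is to exploit the classical structure of $Q$ as the coinvariant algebra: $Q$ is a graded algebra, isomorphic as a $W$-module to the regular representation $kW$, concentrated in degrees $0$ through $N := \ell(w_0) = |\Delta^+|$, with $\dim Q_N = 1$. I would first reduce to a homogeneous $f$, say of degree $d > 0$, since the $A_i$ are homogeneous of degree $-1$ and preserve the grading on $Q$. The goal is then to show no nonzero homogeneous element of positive degree can be annihilated by all the $A_i$.

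The key tool is a nondegenerate pairing. First I would recall (from \cite{BGG}) that the operators $A_w$, $w \in W$, act on $Q$ and that $A_{w_0}$ maps $Q$ onto $Q_0 = k$ (it sends the product of the positive roots, which spans $Q_N$, to a nonzero scalar, and kills everything of lower degree). More precisely, for homogeneous $f$ of degree $d$, one has $A_w f = 0$ whenever $\ell(w) > d$, and the bilinear form $Q_d \times Q_{N-d} \to k$ obtained from $A_{w_0}$ applied to the product is nondegenerate (Poincar\'e duality for the coinvariant algebra). The second and crucial step: suppose $A_i f = 0$ for all $i$. I claim then $A_w f = 0$ for every $w \ne e$. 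This follows by induction on $\ell(w)$: writing $w = s_i w'$ reduced, $A_w f = A_i (A_{w'} f)$, and one needs to run the induction so that each $A_{w'}f$ is again annihilated by all the $A_j$ — but this is not automatic, so instead I would argue directly: pick any $w$ with $\ell(w) \ge 1$ and a reduced word ending in $s_i$, i.e. $w = w' s_i$, so $A_w = A_{w'} A_i$; then $A_w f = A_{w'}(A_i f) = A_{w'}(0) = 0$. Hence $A_w f = 0$ for all $w \ne e$, in particular $A_{w_0} f = 0$.

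Now I would finish using nondegeneracy. If $\deg f = d > 0$, then for any $g \in Q_{N-d}$ we have, by the skew-derivation property iterated (equation $(13)$) and the fact that $A_{w_0}$ is, up to sign and the $W$-twisted Leibniz rule, expressible through lower $A_w$'s acting on $f$ and on $g$, the value $A_{w_0}(fg)$ is a $k$-linear combination of terms $(\text{something})\cdot (A_{w'} f)$ with $\ell(w') \ge 1$ together with the term $f \cdot A_{w_0} g$; but $f$ lies in positive degree while $A_{w_0}g \in Q_{N - (N-d) - \ell(w_0)}\subset Q_{d-N}=0$ unless $d = N$... I would need to be a little more careful and instead invoke directly the known fact that the pairing $\langle f, g\rangle := A_{w_0}(fg) \in Q_0 = k$ is nondegenerate on $Q_d \times Q_{N-d}$, together with a Leibniz-type expansion showing $A_{w_0}(fg)$ depends on $f$ only through the collection $\{A_w f : w \le w_0\}$. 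Since all those vanish for $w \ne e$ and $A_e f = f$ contributes only in the top degree, one gets $\langle f, g \rangle = 0$ for all $g$, forcing $f = 0$ by nondegeneracy, unless $d = 0$. This contradicts $d > 0$, completing the proof.

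The main obstacle I anticipate is making the "$A_{w_0}(fg)$ depends on $f$ only through $\{A_w f\}$" step precise: the iterated skew-Leibniz formula $(13)$ produces a sum over shuffles of the reduced word for $w_0$, distributing the factors $A_{i_1}, \dots, A_{i_N}$ between $f$ and $g$, with $W$-twists in the coefficients. One must check that the only surviving term (after imposing $A_w f = 0$ for $w \ne e$) is the one where \emph{all} operators hit $g$, which has the right degree to land in $Q_0$ precisely when $\deg f = \deg(A_{w_0}g) + \deg f$ is correct, i.e. it pairs $Q_d$ with $Q_{N-d}$. Alternatively, and more cleanly, one could bypass the explicit shuffle and argue via the standard fact (e.g. from the theory of Schubert calculus on $G/B$, or \cite[Sect.\ 3]{BGG}) that $\{A_w(\text{top class}) : w \in W\}$, equivalently the images under all $A_w$, generate $Q$ as a vector space and the radical of the pairing is $A_i$-stable; then $f$ in the radical of the form forces $f \in Q_0$. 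I would adopt whichever of these is shortest given what has been cited.
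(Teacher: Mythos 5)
Your proposal is correct in spirit but takes a far more elaborate route than the paper, which proves the lemma in one line. The paper's argument relies on the observation recorded in \ref{B.1} that $A_if=0 \Leftrightarrow s_if=f$; under your hypothesis $f$ is therefore $W$-invariant in $Q$, and since $Q$ is the coinvariant algebra (quotient of $S(\mathfrak h)$ by the ideal generated by the positive-degree $W$-invariants), one has $Q^W=k$ by complete reducibility, so $f$ is a scalar. Your approach instead deploys Poincar\'e duality for the coinvariant algebra (nondegeneracy of the top-class pairing $\langle f,g\rangle = A_{w_0}(fg)$), together with the correct factorization trick $A_w f = A_{w'}(A_if)=0$ for $w=w's_i$ reduced. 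That route is workable, but notice that the delicate step you flag --- controlling the $W$-twists in the iterated skew-Leibniz expansion of $A_{w_0}(fg)$ --- is resolved exactly by invoking $s_if=f$, which is the same equivalence the paper exploits; once one has $s_if=f$ for all $i$, one is already done without any pairing at all. So while your argument buys nothing extra here and carries the gaps you yourself identify, it would be a natural path in a setting where the equivalence $A_if=0\Leftrightarrow s_if=f$ were unavailable (for instance for more general ``divided-difference''-type operators). For the present lemma, the short argument via $W$-invariance is strongly preferable.
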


\begin {proof} As noted in \ref {B.1}, the hypothesis is equivalent to $s_if=f, \forall i \in I$, that is to say to $f$ being $W$ invariant.   Hence the assertion.
\end {proof}

\subsection{}\label{B.3}

The natural gradation on $S(\mathfrak h)$ descends to $Q$.  The following is an implicit consequence of \cite [Thm. 3.14]{BGG} (since the BGG dimension polynomials form a basis of $Q$). We give an easy proof.

\begin {cor}   Suppose $f \in Q$ has degree $m$.   Then there exists $w \in W$ of length $m$ such that $A_wf$ is a non-zero scalar.  Moreover if $A_if\neq 0$, then we may assume that $\ell(ws_i) = \ell(w)-1$.
\end {cor}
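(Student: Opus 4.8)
The plan is to induct on the degree $m$ of $f \in Q$. For $m=0$ the statement is vacuous (take $w = e$). For the inductive step, suppose $f$ has degree $m \geq 1$. By Lemma \ref{B.2}, since $f$ is not a scalar, there exists some $i \in I$ with $A_i f \neq 0$. Because $A_i$ lowers degree by one (it is the quotient of a degree-zero difference by the linear form $h_i$), the element $g := A_i f$ lies in $Q$ and has degree $m-1$. Applying the inductive hypothesis to $g$, there exists $v \in W$ with $\ell(v) = m-1$ such that $A_v g$ is a non-zero scalar.

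The next step is to set $w := v s_i$ and check that this $w$ does the job. First I would verify $\ell(w) = m$: a priori $\ell(v s_i)$ is either $m$ or $m-2$, and if it were $m-2$ then a reduced word for $v$ would end in $s_i$, so we could write $A_v = A_{v s_i} A_i$, giving $A_v g = A_{v s_i} A_i A_i f = 0$ since $A_i$ has square zero — contradicting that $A_v g$ is a non-zero scalar. Hence $\ell(w) = \ell(v s_i) = m$, and then $A_w = A_v A_i$ (reduced word for $w$ obtained by appending $s_i$ to one for $v$), so $A_w f = A_v A_i f = A_v g$ is a non-zero scalar, as required. Moreover $\ell(w s_i) = \ell(v) = m - 1 = \ell(w) - 1$, which gives the final clause of the statement — but note this was established for the particular $i$ we chose at the start, so to get the full ``moreover'' (for \emph{any} $i$ with $A_i f \neq 0$) one simply runs the same construction starting from that given $i$.

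The only genuinely delicate point is the length bookkeeping: one must be careful that appending $s_i$ to a reduced word for $v$ actually produces a reduced word for $w$, which is exactly the content of $\ell(v s_i) = \ell(v) + 1$, and this is forced by the square-zero relation as above rather than needing any deeper exchange-condition argument. Everything else — that $A_i$ drops degree, that $A_v g$ being a nonzero scalar persists — is immediate from the structure of the BGG monoid recalled in \ref{B.1} and the skew-derivation property \eqref{13}. So I expect no real obstacle; the proof is a clean descending induction driven by Lemma \ref{B.2} together with the nilpotency $A_i^2 = 0$.
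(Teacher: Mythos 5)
Your proof is correct and follows essentially the same line as the paper's: both use Lemma \ref{B.2} as the engine, together with the fact that $A_i$ lowers homogeneous degree by one, to push downward until a non-zero scalar is reached. The paper phrases this as ``take $n$ maximal with $A_y f\neq 0$ for some $y$ of length $n$'' and derives a contradiction with Lemma \ref{B.2} if $n<m$; you phrase it as a descending induction on degree, which makes the length bookkeeping ($\ell(vs_i)=\ell(v)+1$ forced by $A_i^2=0$) and the ``moreover'' clause slightly more explicit, but the underlying idea is identical.
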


\begin {proof} We can assume $f$ homogeneous.  Then $A_if$ is homogeneous of degree $m-1$ or zero.  Let $n \in \mathbb N$ be maximal such that $A_yf \neq 0$, for some $y \in W$ of length n.  Then $n \leq m$.  If a strict inequality held then we would obtain a homogeneous element $g \in Q$ of degree $m-n$ which is non-zero and yet annihilated by all the $A_j:j\in I$.  This contradicts the conclusion of Lemma \ref {B.2}.
\end {proof}

\section{Exponents}

\subsection{}\label{C.1}

Recall the result of Chevalley that $S(\mathfrak h)^W$ is a polynomial algebra on $\ell$ generators which can be assumed to be homogeneous.   The degrees $m_i+1:i\in I$ of these generators can assumed to be increasing.  The $m_i:i\in I$ are called the \textit{exponents} of $\mathfrak g$. They are same for $\mathfrak g^\vee$. One has $m_1=1$ and $m_\ell=\rho(\beta^\vee_0)$, where $\beta_0$ is the unique highest root.  After Kostant \cite {K} the dimensions of the simple submodules of $\mathfrak g$ under the action of a principal s-triple are the $2m_i+1:i \in I$.  In particular $(e^\vee)^{m_\ell+1}(\mathfrak h)=0$, in the sense of \ref {3.2}.  On the other hand as we have already seen $(e^\vee)^{m_1+1} s\rho =0, \forall s \in k$.

\subsection{}\label{C.2}

Recall \ref {3.5} and let $m-1$ be the common degree of a set $\{P_i:i \in I\}$ of solutions to $(9)$ coming from a Zhelobenko invariant $J$.  To settle the analogue Kostant conjecture it is obviously enough to take $J$ to be one of the free generators of $(\mathfrak h \otimes S(\mathfrak h)))^\Xi$.  Then the leading term of $J$ is given by differential of an invariant generator. This means in particular that we may assume $m$ to be an exponent.  Then we must show that
$$(e^\vee)^{m+1}\sum_{i\in I}\varpi_iP_i(s\rho)=0, \forall s \in k.$$

By the remarks in \ref {2.6} and \ref {C.1}, this holds trivially if either $m=m_1$, or $m=m_\ell$. In particular the analogue Kostant conjecture holds trivially in rank $2$ (as was well-known for the Kostant conjecture itself).

\section{The Zhelobenko Monoid}

\subsection{}\label{6.1}

Set $m_{i,j}=h_i(\alpha_j)h_j(\alpha_i):i,j \in I$.  Recall that $m_{i,j}\in \{0,1,2,3,4\}$.

  View the $P_i: i \in I$ as polynomials on $\mathfrak h^*$ satisfying $(14)$.  The action on the $A_i: i \in I$ on these elements gives a finite set $\textbf{P}$ of polynomials.  A straightforward calculation gives

$$\begin {array} {rccl} A_jP_i&=&0 &:m_{i,j}=0,\\\\
A_jP_i&=&\frac {P_i-P_j}{(1+\alpha_i^\vee+\alpha_j^\vee)}&:m_{i,j}=1, \\\\
A_j A_i P_j&=&\frac{2(P_i-P_j)}{(1+\alpha_i^\vee+\alpha_j^\vee)(1+\alpha_i^\vee+2
\alpha_j^\vee)} &:m_{i,j}=2,\\\\
 A_j A_i A_j A_i P_j& =&\frac{6(P_j-P_i)}{(1+\alpha_i^\vee+\alpha_j^\vee)(1+\alpha_i^\vee+2\alpha_j^\vee)
(1+\alpha_i^\vee+3\alpha_j^\vee)(1+2\alpha_i^\vee+3\alpha_j^\vee)}&:m_{i,j}=3,\\

\end {array} \eqno{(14)}$$
where $\alpha^\vee_j$ is assumed to be the shorter of the two coroots.

\

It follows from $(14)$ that the $A_i,P_j :i,j \in I$ satisfy the following relations

$$\begin {array}{rccrcll} A_i^2&=&0,&A_i P_i& =&0,\\
 A_i A_j& =& A_j A_i, & A_i P_j& =&0 &:m_{i,j}=0, \\
 A_iA_jA_i&=&A_jA_iA_j, &A_iP_j&=&-A_jP_i&:m_{i,j}=1,\\
 (A_iA_j)^2&=&(A_jA_i)^2, &A_jA_iP_j&=&-A_iA_jP_i&:m_{i,j}=2,\\
 (A_iA_j)^3&=&(A_jA_i)^3, &(A_jA_i)^2P_j&=&-(A_iA_j)^2P_i&:m_{i,j}=3.\\

 \end {array}\eqno{(15)}$$

  We call the pair $(\textbf{A},\textbf{P})$ satisfying the above relations the Zhelobenko monoid.  Notice that as a word the length $\ell(P)$ of $P \in \textbf{P}$ is independent of presentation.  Again the subset of $I$ of letters occurring in $P \in \textbf{P}$ is independent of presentation.  It is denoted by Supp $P$.

 \subsection{}\label{6.2}

 The structure of the Zhelobenko monoid is made more complicated by the sign changes forced by the relations in $(15)$.  This problem is analogous to the problem of how to choose signs in the elements $x_\alpha:\alpha \in \Delta$ of a Chevalley basis.  (The latter problem was solved by Tits \cite {T}.)  In Section $7$ we shall use the \textit{existence} of $\mathfrak g$ to solve this sign problem. In the remainder of Section $6$ we just identify two elements of $\textbf{P}$ if they differ by a change of sign.

 The structure of the Zhelobenko monoid is significantly simpler in the simply-laced case.  Thus in this and the next two subsections we shall assume that $\mathfrak g$ is simply-laced.

 \begin {lemma} ($\mathfrak g$ simply-laced) Suppose $A_{i_1}A_{i_2}\ldots A_{i_r}P_{i_{r+1}}\neq 0$ and set $\beta_t=\sum_{s=t}^{r+1} \alpha_{i_s}$.  Then $\alpha^\vee_{i_{t-1}}(\beta_t)<0, \forall t =2,3,\ldots,r+1$.  In particular $\beta_t$ is a positive root for all $t=1,2,\ldots,r+1$.
 \end {lemma}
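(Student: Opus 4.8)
The plan is to induct on $r$, the number of BGG operators in the word. For $r=0$ the hypothesis is just $P_{i_1}\neq 0$ and there is nothing to check beyond the fact that $\beta_1=\alpha_{i_1}$ is a positive root. For $r=1$ the relations $A_iP_i=0$ and $A_iP_j=0$ (for $m_{i,j}=0$) of $(15)$ force $m_{i_1,i_2}=1$, so $\alpha_{i_1}$ and $\alpha_{i_2}$ are adjacent simple roots; then $\alpha_{i_1}^\vee(\beta_2)=\alpha_{i_1}^\vee(\alpha_{i_2})=-1<0$ and $\beta_1=\alpha_{i_1}+\alpha_{i_2}$, $\beta_2=\alpha_{i_2}$ are positive roots.

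For the inductive step assume $r\geq 2$ and put $v:=A_{i_2}\cdots A_{i_r}P_{i_{r+1}}$, so that $A_{i_1}v\neq 0$ forces $v\neq 0$ and the inductive hypothesis applies to $v$: one has $\alpha_{i_{t-1}}^\vee(\beta_t)<0$ for $t=3,\ldots,r+1$, and $\beta_2,\ldots,\beta_{r+1}$ are positive roots. Since $\mathfrak g$ is simply-laced and each $\beta_t$ ($t\geq2$) is a positive root, all these Cartan integers equal $-1$, so $\beta_{t-1}=\beta_t+\alpha_{i_{t-1}}=s_{i_{t-1}}(\beta_t)$ for $t=3,\ldots,r+1$, whence $\beta_2=s_{i_2}s_{i_3}\cdots s_{i_r}(\alpha_{i_{r+1}})$. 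Everything thus reduces to the single inequality $\alpha_{i_1}^\vee(\beta_2)<0$: granting it, simply-lacedness together with $\beta_2$ being a positive root forces $\alpha_{i_1}^\vee(\beta_2)=-1$, hence $\beta_2\neq\alpha_{i_1}$ and $\beta_1=\beta_2+\alpha_{i_1}=s_{i_1}(\beta_2)$ is a positive root, which closes the induction.

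To prove $\alpha_{i_1}^\vee(\beta_2)<0$ I argue by contradiction, assuming $\alpha_{i_1}^\vee(\beta_2)\geq 0$ and deducing $A_{i_1}v=0$. As $\beta_2$ is a positive root of height $r\geq 2$ it is not simple, so $\alpha_{i_1}^\vee(\beta_2)\neq 2$ and hence $\alpha_{i_1}^\vee(\beta_2)\in\{0,1\}$. The vanishing $A_{i_1}v=0$ I would get from a secondary induction, proving the sharper statement that for a nonzero $v=A_{i_2}\cdots A_{i_r}P_{i_{r+1}}$ and any $j\in I$ with $\alpha_j^\vee(\beta_2)\geq 0$ one has $A_jv=0$. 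Writing $v=A_{i_2}v'$ with $v'=A_{i_3}\cdots A_{i_r}P_{i_{r+1}}\neq0$ and expanding $A_j(A_{i_2}v')$ by the Leibniz rule $A_j(fg)=A_j(f)g+s_j(f)A_j(g)$ together with the first-step formulas $(14)$: if $j=i_2$ then $A_jv=A_{i_2}^2(\cdots)=0$; if $j$ is not adjacent to $i_2$ and $j\neq i_2$ then $A_j$ commutes past $A_{i_2}$, leaving $A_{i_2}(A_jv')$, and $\alpha_j^\vee(\beta_3)=\alpha_j^\vee(\beta_2)\geq0$ lets the secondary hypothesis kill $A_jv'$; the remaining case $j\sim i_2$ --- in which $\alpha_j^\vee(\beta_3)=\alpha_j^\vee(\beta_2)+1\geq 1$, so that $j$ lies in the support of $\beta_2$ --- is the delicate one, where one must combine braid moves among the $A_i$'s (which in favourable sub-cases push an $A_{i_2}$ onto a $P$-seed that it annihilates) with the explicit denominators furnished by $(14)$.

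I expect this last vanishing to be the main obstacle. There is no convenient closed form for a general nonzero element of the Zhelobenko monoid --- its denominator already carries factors $1+\gamma^\vee$ for various partial subsums $\gamma$, not merely the tail-sums $\beta_t$ --- so one must keep track at once of its support, of the precise linear forms occurring in its denominator, and of the sign ambiguities intrinsic to $(15)$, the difficulty flagged in \ref{6.2}. The identity $\beta_2=s_{i_2}\cdots s_{i_r}(\alpha_{i_{r+1}})$, and the simply-laced hypothesis --- which confines every Cartan integer, and every pairing of a positive root against a simple coroot, to $\{-1,0,1\}$ once $\pm\alpha_j$ is excluded --- are what keep the case analysis finite; but organising the reductions and bookkeeping the signs is where the real work lies.
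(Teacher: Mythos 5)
Your setup coincides with the paper's: induct on $r$; use the inductive hypothesis on $A_{i_2}\cdots A_{i_r}P_{i_{r+1}}$ to dispose of the inequalities $\alpha^\vee_{i_{t-1}}(\beta_t)<0$ for $t\geq 3$ and the positivity of $\beta_2,\ldots,\beta_{r+1}$; reduce to the single claim $\alpha^\vee_{i_1}(\beta_2)<0$; and argue by contradiction, showing $\alpha^\vee_{i_1}(\beta_2)\geq 0$ forces $A_{i_1}v=0$. Your handling of the subcases $i_1=i_2$ (square-zero) and $\alpha^\vee_{i_1}(\alpha_{i_2})=0$ (commute $A_{i_1}$ past $A_{i_2}$, note $\alpha^\vee_{i_1}(\beta_3)=\alpha^\vee_{i_1}(\beta_2)\geq 0$, and kill $A_{i_1}A_{i_3}\cdots P_{i_{r+1}}$ by the inductive contrapositive) is exactly what the paper does.

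The gap is the subcase $\alpha^\vee_{i_1}(\alpha_{i_2})=-1$, which you flag as ``the delicate one'' and do not resolve; this is precisely where the paper spends its effort, so the proposal is not a proof. Two further remarks. First, your ``sharper'' secondary statement is not sharper: it is verbatim the contrapositive of the lemma for the word $A_jA_{i_2}\cdots A_{i_r}P_{i_{r+1}}$, so the reformulation buys nothing. Second, your expectation that one must track denominators from $(14)$ and wrestle with sign ambiguities is misplaced: the paper's argument never touches $(13)$ or $(14)$ and uses only the monoid relations $(15)$, the inductive hypothesis, and the simply-laced constraint $\alpha^\vee(\gamma)\in\{0,\pm 1,\pm 2\}$ for roots $\gamma$. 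Concretely, with $\alpha^\vee_{i_1}(\beta_2)=0$ and $\alpha^\vee_{i_1}(\alpha_{i_2})=-1$ one gets $\alpha^\vee_{i_1}(\beta_3)=1$ and splits on whether $i_3=i_1$. If $i_3=i_1$ then $\alpha^\vee_{i_1}(\beta_4)=-1$, so $\alpha^\vee_{i_1}(\alpha_{i_2}+\beta_4)=-2$; hence $\alpha_{i_2}+\beta_4$ is not a root, and by the inductive contrapositive $A_{i_2}A_{i_4}\cdots P_{i_{r+1}}=0$; the braid relation $A_{i_1}A_{i_2}A_{i_1}=A_{i_2}A_{i_1}A_{i_2}$ then annihilates the full word. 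If $i_3\neq i_1$, simply-lacedness gives $\alpha^\vee_{i_1}(\alpha_{i_3})=0$ and $\alpha^\vee_{i_1}(\beta_4)=1$, so $A_{i_1}$ commutes one step further right and one iterates. That chain --- slide $A_{i_1}$ rightward across commuting letters until either it falls off the end or meets a repeated index and is removed by a braid move --- is the missing piece of your case $j\sim i_2$.
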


 \begin {proof}  The proof is by induction on $r$.  If $r=1$ the assertion follows from the first and second lines of $(15)$.

  Consider the case $\alpha^\vee_{i_1}(\beta_2)>0$.

  In this case $\alpha^\vee_{i_1}(\alpha_{i_2}+\beta_3)>0$.  By the first line of $(15)$ we may assume $i_1 \neq i_2$.  Now $\beta_3$ is a positive root by the induction hypothesis and then since $\mathfrak g$ is simply-laced the above inequality forces $\alpha^\vee_{i_1}(\alpha_{i_2})=0$ and $\alpha^\vee_{i_1}(\beta_3)>0$. Then the required assertion follows from the second line of $(15)$ and the induction hypothesis.

   Consider the case $\alpha^\vee_{i_1}(\beta_2)=0$.

   In this case $\alpha^\vee_{i_1}(\alpha_{i_2}+\beta_3)=0$ and by the first line of $(15)$ we may assume $i_1 \neq i_2$.

   Suppose that $\alpha^\vee_{i_1}(\alpha_{i_2})=0$.  Then $\alpha^\vee_{i_1}(\beta_3)=0$. Then the required assertion follows from the second line of $(15)$ and the induction hypothesis.

    Next suppose that $\alpha^\vee_{i_1}(\alpha_{i_2})=-1$ and $\alpha^\vee_{i_1}(\beta_3)=1$.

    Admit that $i_1=i_3$.  Then $\alpha^\vee_{i_1}(\beta_4)=-1$ and so $\alpha_{i_2}+\beta_4$ cannot be a root (since $\mathfrak g$ is simply-laced) and so $A_{i_2}A_{i_4}\ldots P_{i_{r+1}}=0$, by the induction hypothesis.  Using the left-hand side of $(19)$ gives the assertion in this case.

    Finally admit that $i_1 \neq i_3$.  Then since $\mathfrak g$ is simply-laced we obtain $\alpha^\vee_{i_1}(\alpha_{i_3})=0$ and $\alpha^\vee_{i_1}(\beta_4)=1$.  Then through the left hand-side of the second line in $(19)$ we obtain the required result by repeating the previous argument.

 \end {proof}

 \subsection{}\label{6.3}

  \begin {lemma} ($\mathfrak g$ simply-laced)  Suppose $$A_{i_1}A_{i_2}\ldots A_{i_r}P_{i_{r+1}}=\pm A_{j_1}A_{j_2}\ldots A_{j_s}P_{j_{s+1}},$$
 is non-zero.  Then $r=s$ and up to a change of sign the right-hand side may be re-expressed through lines one to three in $(15)$ such that $i_t=j_t, \forall t =1,2,\ldots,r+1$.
  \end {lemma}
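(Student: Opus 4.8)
The plan is to argue by induction on $r$ (equivalently on the common word length, since the first claim $r=s$ will fall out along the way). The base case $r=0$ is immediate: $P_{i_1}=\pm P_{j_1}$ non-zero forces $i_1=j_1$ because distinct fundamental weights are linearly independent, and if $r=0<s$ the right-hand side would be a nonconstant element of $\textbf{P}$ by Lemma \ref{6.2}, hence not a scalar multiple of a $P_j$ — but more to the point, $(14)$ shows every nonzero $A_{j_1}\cdots A_{j_s}P_{j_{s+1}}$ with $s\geq 1$ has a strictly positive denominator, so it cannot equal $\pm P_{i_1}$. This also gives $r=s$ in general once we know both sides are nonzero and we track the denominators in $(14)$, since $\ell(P)$ is presentation-independent (noted in \ref{6.1}) and the denominator degree there equals the word length.

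For the inductive step I would apply $A_{i_1}$ to both sides — or rather, analyze which letters can legitimately appear first. The key structural input is Lemma \ref{6.2}: if $A_{i_1}\cdots A_{i_r}P_{i_{r+1}}\neq 0$ then, writing $\beta_t=\sum_{s=t}^{r+1}\alpha_{i_s}$, each partial sum $\beta_t$ is a positive root and $\alpha^\vee_{i_{t-1}}(\beta_t)<0$. The same holds for the $j$-expression. Now I want to show that one can bring the right-hand side into a form starting with $A_{i_1}$ using only the braid relations $A_iA_j=A_jA_i$ (when $m_{i,j}=0$) and $A_iA_jA_i=A_jA_iA_j$ (when $m_{i,j}=1$), possibly at the cost of a sign coming from the mixed relations in lines two and three of $(15)$. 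Concretely: $\beta_1$ is the same positive root on both sides (it is $\sum_t\alpha_{i_t}+\alpha_{i_{r+1}}$ read off from either word, since the multiset of indices is forced), and the commutation/braid moves among the $A$'s act as the corresponding moves among simple reflections on $W$; because $A_{i_1}\cdots A_{i_r}$ is (up to the $P$-relations) the element $A_{w}$ with $w=s_{i_1}\cdots s_{i_r}$, and the $j$-word gives the same $w$ times $P$ (one checks the $w$'s agree by looking at the action on $\mathfrak h^*$, as in \ref{B.1}), I can reduce any reduced expression for $w$ to one beginning with $s_{i_1}$ by braid moves. Transporting this reduction through the Zhelobenko relations $(15)$ — where a braid move of the $A$'s that crosses the $P$ picks up exactly the sign dictated by lines two/three — rewrites the $j$-side as $\pm A_{i_1}A_{k_2}\cdots A_{k_r}P_{k_{r+1}}$ with a new (still reduced, still nonzero by Lemma \ref{6.2} applied to it) word. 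Cancelling the skew-derivation prefactor is not available directly, so instead I apply the induction hypothesis to $A_{k_2}\cdots A_{k_r}P_{k_{r+1}}$ versus $A_{i_2}\cdots A_{i_r}P_{i_{r+1}}$ after observing these are equal: this follows because $A_{i_1}$ is injective on the relevant homogeneous piece — or, cleaner, because from $(14)$ the value $A_{i_1}(X)=A_{i_1}(Y)$ with $X,Y$ in the span of $\textbf{P}$ and $X-Y$ killed by $A_{i_1}$ forces $X-Y$ to be $s_{i_1}$-invariant, but elements of the span of $\textbf{P}$ with that invariance are constrained by Lemma \ref{6.2}'s positivity statement to vanish unless $X=Y$. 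Then the induction hypothesis gives $k_t=i_t$ for $t\geq 2$, and we already have $k_1=i_1$, completing the step.

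The main obstacle — and the step deserving the most care — is the sign bookkeeping when transporting braid and commutation moves of the $A_i$ past the terminal $P$ through the relations $(15)$. In the simply-laced case only $m_{i,j}\in\{0,1\}$ occurs, so the mixed relation $A_iP_j=-A_jP_i$ of line three is the only sign-producing move, and one must verify that the sequence of elementary moves realizing the reduction of $w$ to an expression beginning with $s_{i_1}$ can always be chosen so that each application of this relation happens in a legitimate position (i.e.\ with the two offending letters adjacent to $P$), which is exactly where Lemma \ref{6.2} is used: it guarantees the partial sums stay positive roots, which in the simply-laced setting forces the relevant inner products to be $0$ or $-1$ and hence makes the needed move available. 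A secondary subtlety is confirming $r=s$ without circularity; I would settle this first, purely from the denominator-degree count in $(14)$ together with presentation-independence of $\ell(P)$, before running the index-matching induction.
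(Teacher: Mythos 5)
Your plan has a fatal flaw at its center, in the claim that the two words determine the same Weyl group element. You write that $A_{i_1}\cdots A_{i_r}$ is (modulo the $P$-relations) $A_w$ with $w=s_{i_1}\cdots s_{i_r}$, that ``the $j$-word gives the same $w$ times $P$,'' and that one can therefore connect the two words by braid moves as for reduced expressions of a fixed $w$. This is simply false, and the counterexample is already in line three of $(15)$: for $m_{i,j}=1$ one has $A_iP_j=-A_jP_i$, so $s_i$ on the left versus $s_j$ on the right, two distinct elements of $W$. The rational function $A_{i_1}\cdots A_{i_r}P_{i_{r+1}}$ only determines the positive root $\beta_1=s_{i_1}\cdots s_{i_r}\alpha_{i_{r+1}}$, and the factorization of a given $\beta_1$ into a pair $(w,\alpha_{i_{r+1}})$ with $w\alpha_{i_{r+1}}=\beta_1$ is far from unique (already $s_1\alpha_2=\alpha_1+\alpha_2=s_2\alpha_1$ in type $A_2$). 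Consequently Tits' connectedness of reduced expressions for a \emph{fixed} $w$ is not the right tool, and the proposal as stated cannot be repaired by ``sign bookkeeping'': the issue is not a sign but a different $w$.

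There are two secondary gaps. First, your justification that $\beta_1$ is the same on both sides (``since the multiset of indices is forced'') is circular: pinning down the multiset of indices is precisely what the lemma is trying to establish. Second, the cancellation of $A_{i_1}$ in the inductive step is not justified; you appeal to $s_{i_1}$-invariance of $X-Y$ and then to Lemma~6.2's positivity, but Lemma~6.2 says nothing about invariance, and it is not explained why an $s_{i_1}$-invariant difference of two elements of the span of $\mathbf P$ must vanish.

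The paper's actual proof bypasses the ``common $w$'' idea entirely. Having obtained from Lemma~6.2 (applied to \emph{each} side) the common positive root $\beta_1$ with $o(\beta_1)=r+1=s+1$, it uses the positivity $\alpha^\vee_{j_1}(\beta_t)>0$, together with the simply-laced constraint $\alpha^\vee_{j_1}(\alpha_{i_u})\in\{-1,0,2\}$, to show that either $j_1=i_t$ for the first $t$ with a nonzero pairing (and all earlier pairings vanish so $A_{j_1}$ commutes past $A_{i_1},\dots,A_{i_{t-1}}$ by line two of $(15)$), or one is at the terminal letter and the mixed relation of line three is invoked. This lets one cancel $A_{j_1}$ and induct on $r$. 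The argument never needs the two words to define the same Weyl group element, which is why it works where your approach does not.
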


  \begin {proof}

    Apply Lemma \ref {6.2} to the non-vanishing of the left-hand side.  In the notation of this lemma it follows that $\beta_1$ is a positive root satisfying $o(\beta_1)=r+1$.  A similar assertion holds for the right-hand side forcing $r=s$ and $\beta_1=\sum_{t=1}^{s+1} \alpha_{j_t}$. Moreover from this last expression $\alpha_{j_1}^\vee(\beta_1) >0$, since $\mathfrak g$ is simply-laced.  We conclude the proof by induction on $r$.  In this we may assume that $i_1 \neq j_1$ for otherwise we may cancel off $A_{j_1}$.

    From the relation $\alpha_{j_1}^\vee(\alpha_{i_1}+\beta_2) >0$, and since $\mathfrak g$ is simply-laced we conclude that  $\alpha_{j_1}^\vee(\alpha_{i_1})=0$ or $\alpha_{j_1}=\beta_2$, the latter being possible only if $r=1$.  In the former case $\alpha_{j_1}^\vee(\beta_2)>0$ and we can repeat this argument until the first $t \leq r+1$ is reached with $j_1=i_t$.  In this $\alpha_{j_1}^\vee(\alpha_{i_u}):u<t$ and then using the relations in the second and third lines of $(19)$ we may cancel off $A_{j_1}$ as before.
\end {proof}

\textbf{Remark}.  Notice that all the relations in $(15)$ relevant to the simply-laced case have been used in the combined proofs of these last two lemmas.

\subsection{}\label{6.4}

Continue to assume that $\mathfrak g$ is simply-laced.

It follows from Lemmas \ref {6.2}, \ref {6.3} that there is a natural bijection $\mathscr P : \Delta^+ \iso \textbf{P}$
described by setting $\mathscr P(s_{j_1}s_{j_2}\ldots s_{j_s}\alpha_{j_{s+1}})
=\pm A_{j_1}A_{j_2}\ldots A_{j_s}P_{j_{s+1}}$, when the right hand side is non-zero.  The following is immediate.

\begin {lemma} ($\mathfrak g$ simply-laced.)

\

(i) \ $\ell(\mathscr P(\gamma))=\ell(\gamma)$, for all $\gamma \in \Delta^+$.

\

(ii) \ If  $A_\alpha \mathscr P(\gamma)\neq 0$ then  $A_\alpha \mathscr P(\gamma)=\mathscr P(s_\alpha\gamma)$, for all $\alpha \in \pi, \gamma \in \Delta^+$.

\

(iii) \  $\textbf{P}$ admits a unique element of maximal length (up to signs).
\end {lemma}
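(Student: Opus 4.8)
The plan is to deduce the three parts of the lemma directly from the bijection $\mathscr P:\Delta^+\iso\textbf{P}$ together with the standard combinatorics of $W$ acting on $\Delta^+$, treating the simply-laced hypothesis as in force throughout. For (i), note that the defining formula $\mathscr P(s_{j_1}\cdots s_{j_s}\alpha_{j_{s+1}})=\pm A_{j_1}\cdots A_{j_s}P_{j_{s+1}}$ presents $\mathscr P(\gamma)$ by a word of length $s+1$. By Lemma \ref{6.4} (Lemma \ref{6.3} in the numbering above) any two presentations of the same element of $\textbf{P}$ have the same word length, so $\ell(\mathscr P(\gamma))$ is well defined; and by Lemma \ref{6.2}, when $A_{j_1}\cdots A_{j_s}P_{j_{s+1}}\neq 0$ the associated root $\beta_1=\sum_{t=1}^{s+1}\alpha_{j_t}$ satisfies $o(\beta_1)=s+1$. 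Since $\beta_1=s_{j_1}\cdots s_{j_s}\alpha_{j_{s+1}}=\gamma$ (one checks this is forced: each $\beta_t$ is a positive root and $s_{j_t}\beta_{t+1}=\beta_t$ because $\alpha_{j_t}^\vee(\beta_{t+1})<0$ in the simply-laced case forces $\alpha_{j_t}^\vee(\beta_{t+1})=-1$), we get $o(\gamma)=s+1=\ell(\mathscr P(\gamma))$. As $o(\gamma)=\ell(\gamma)$ for roots in the simply-laced case — every positive root being $W$-conjugate to a simple root with $o$ counting the reflections needed — part (i) follows.

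For (ii), suppose $A_\alpha\mathscr P(\gamma)\neq 0$ with $\alpha=\alpha_k$. Writing $\mathscr P(\gamma)=\pm A_{j_1}\cdots A_{j_s}P_{j_{s+1}}$ with $\gamma=s_{j_1}\cdots s_{j_s}\alpha_{j_{s+1}}$, the element $A_k\mathscr P(\gamma)=\pm A_kA_{j_1}\cdots A_{j_s}P_{j_{s+1}}$ is a nonzero element of $\textbf{P}$ presented by a word whose first letter is $A_k$; by the definition of $\mathscr P$ it therefore equals $\pm\mathscr P(s_k s_{j_1}\cdots s_{j_s}\alpha_{j_{s+1}})=\pm\mathscr P(s_k\gamma)$. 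Identifying $\textbf{P}$ with $\Delta^+$ and forgetting signs, this is exactly $A_\alpha\mathscr P(\gamma)=\mathscr P(s_\alpha\gamma)$. (One should check that $s_k\gamma$ is again positive here: this is automatic since the left side is a nonzero element of $\textbf{P}=\mathscr P(\Delta^+)$ and $\mathscr P$ is valued in $\Delta^+$; alternatively $A_k\mathscr P(\gamma)\neq 0$ is by definition equivalent, via $(14)$, to $\alpha_k^\vee(\gamma)<0$, i.e. $s_k\gamma=\gamma-\alpha_k^\vee(\gamma)\alpha_k\succ\gamma$ is positive.)

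For (iii), existence: among the finitely many elements of $\textbf{P}$ pick one, $P$, of maximal length; by part (i) and the bijection $\mathscr P$, $P=\mathscr P(\gamma_0)$ with $\gamma_0$ of maximal height $o(\gamma_0)$ in $\Delta^+$, and in a simple (hence simply-laced here) Lie algebra the highest root $\beta_0$ is the unique root of maximal height. So $P=\mathscr P(\beta_0)$ is the unique element of $\textbf{P}$ of maximal length up to sign. The one point requiring a line of care is the phrase ``up to signs'': distinct reduced presentations of $\mathscr P(\beta_0)$ might a priori differ by a sign, but this is precisely the ambiguity the statement allows, and Lemma \ref{6.4} guarantees there is no ambiguity in which element of $\textbf{P}$ (modulo sign) is obtained.

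I expect the only genuine obstacle to be bookkeeping rather than mathematics: pinning down that the root $\beta_1$ attached to a nonvanishing word in Lemma \ref{6.2} really is $s_{j_1}\cdots s_{j_s}\alpha_{j_{s+1}}$ — i.e. that the inequalities $\alpha_{j_{t-1}}^\vee(\beta_t)<0$ propagate correctly so that $\mathscr P$ is consistently defined — but this is exactly the content already extracted in Lemmas \ref{6.2} and \ref{6.3}, so for the present lemma it is just a matter of citing them. All three parts are then formal consequences of the bijection and of $o=\ell$ on roots in the simply-laced case.
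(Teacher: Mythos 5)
Your proof is correct, and since the paper simply declares the lemma ``immediate'' from Lemmas \ref{6.2} and \ref{6.3} and the definition of $\mathscr P$, what you have written is exactly the argument the paper intends the reader to supply: Lemma \ref{6.2} identifies the root $\beta_1=\sum\alpha_{j_t}$ attached to a non-vanishing word with $s_{j_1}\cdots s_{j_s}\alpha_{j_{s+1}}$ and computes its height, giving (i); prepending $A_k$ to a presentation and reapplying the definition of $\mathscr P$ gives (ii); and the bijection together with (i) reduces (iii) to uniqueness of the highest root. (Minor point: your citations momentarily swap the labels of Lemmas \ref{6.2}/\ref{6.3}, but the content cited is the right one in each case.)
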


\subsection{}\label{6.5}

Drop the assumption that $\mathfrak g$ is simply-laced. Then in general the Zhelobenko monoid (even after sign identification) has more elements than $\Delta^+$.   Nevertheless rather surprisingly there is still a unique (up to signs) element in $\textbf{P}$ of maximal length.

As equation $(14)$ might suggest it is more natural to associate $A_i,P_i$ with the coroot $\alpha_i^\vee$.  Below we construct an injection of the set $\Delta^{\vee +}$ of positive coroots to $\textbf{P}$ (again ignoring signs).  Some of this construction is case by case.  We start with a general fact.

\begin {lemma}  $\textbf{P}$ possesses an element of length $m_\ell$.

\end {lemma}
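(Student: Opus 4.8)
The goal is to produce an element of $\textbf{P}$ whose word-length equals $m_\ell = \rho(\beta_0^\vee)$, the largest exponent. The natural strategy is to build it from a reduced expression for a specific Weyl group element acting on a specific $P_i$, exactly as in the simply-laced bijection $\mathscr P$ of \ref{6.4}, but now being careful with the non-simply-laced relations in $(15)$. The plan is to take the highest root $\beta_0$ of $\mathfrak g$ (equivalently, work with the highest \emph{short} coroot so that $o(\cdot)$ equals the quantity controlling length), pick a reduced chain of simple reflections carrying a simple coroot up to $\beta_0^\vee$, and verify that the corresponding product of $A$'s applied to the appropriate $P_i$ does not vanish. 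The length of this word is then $o$ of the relevant coroot, which we arrange to equal $m_\ell$.

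First I would recall the standard fact (Kostant) that $m_\ell = \rho(\beta_0^\vee)$ and that $\rho(\beta_0^\vee) = o(\beta_0^\vee)$ when $\beta_0^\vee$ is expressed in simple coroots — here one must be slightly careful about long versus short, since $(14)$ naturally attaches $A_i, P_i$ to the coroot $\alpha_i^\vee$, so the relevant object is the highest root $\beta_0$ of $\mathfrak g$ whose coroot $\beta_0^\vee$ is the highest short coroot of $\mathfrak g^\vee$, equivalently the dominant coroot. Next, choose a sequence $i_1,\dots,i_r$ of simple indices and an index $i_{r+1}$ such that $\beta_0^\vee = \sum_{s=1}^{r+1}\alpha_{i_s}^\vee$ realises a ``straightening'' chain: at each stage the partial sum $\beta_t^\vee = \sum_{s=t}^{r+1}\alpha_{i_s}^\vee$ is a positive coroot and $\alpha_{i_{t-1}}^\vee$ pairs negatively (after the Cartan identification) so that the relevant line of $(14)$ produces a nonzero right-hand side with a new positive coroot, not a scalar and not zero. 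This is exactly the non-simply-laced analogue of Lemma \ref{6.2}; the point is that each of the four displayed formulas in $(14)$ yields $\pm(P_i - P_j)$ (or $\pm(P_j-P_i)$) divided by a product of \emph{nonzero} linear forms $1 + a\alpha_i^\vee + b\alpha_j^\vee$, so the output is again a genuine, nonzero element of $\textbf{P}$ of strictly larger word-length. Iterating down the chain produces an element of $\textbf{P}$ of word-length equal to $o(\beta_0^\vee) = m_\ell$.

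The main obstacle is the non-vanishing: unlike the simply-laced case, where Lemma \ref{6.2} gives a clean inductive criterion, here one must check that along the chosen chain no cancellation occurs — that is, that one never lands in the $m_{i,j}=0$ case prematurely, and that the scalars appearing in $(14)$ (the coefficients $2$ and $6$, and the linear forms in the denominators) are all nonzero, which they visibly are since the base field has characteristic zero and the forms $1 + a\alpha_i^\vee + b\alpha_j^\vee$ are non-constant hence nonzero in $S(\mathfrak h)$. Concretely I would fix, once and for all, a reduced word for the element $w\in W$ with $w(\alpha_{i_{r+1}}^\vee) = \beta_0^\vee$ obtained by the standard ``add one simple coroot at a time keeping heights increasing'' procedure; such a word exists precisely because $\beta_0^\vee$ is the highest (long) coroot of $\mathfrak g^\vee$ and every positive coroot can be reached from a simple one by such a chain. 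Along this specific chain the pairing hypotheses needed to apply the successive lines of $(14)$ hold by construction, and since each application strictly increases length and produces a nonzero element, after $r$ steps we obtain $A_{i_1}\cdots A_{i_r} P_{i_{r+1}} \neq 0$ of length $r = o(\beta_0^\vee) - 1 + 1 = m_\ell$. (A small bookkeeping point: one should double-check whether the length of $\mathscr P(\beta_0)$-type element is $o(\beta_0^\vee)$ or $o(\beta_0^\vee)-1$; the formulas in $(14)$ show $\ell(P_i)$ counts the $A$'s plus the single trailing $P$, consistent with $\ell(\mathscr P(\gamma)) = \ell(\gamma)$ in the simply-laced Lemma \ref{6.4}(i), so the length is indeed $o(\beta_0^\vee) = m_\ell$.) This completes the construction.
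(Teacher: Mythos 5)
Your approach — trying to build a non-vanishing word $A_{i_1}\cdots A_{i_r}P_{i_{r+1}}$ combinatorially by climbing a chain of coroots — is genuinely different from the paper's, which instead maps the Zhelobenko monoid into the coinvariant algebra $Q$ via $P_i\mapsto A_i\,\partial q/\partial\varpi_i$ for a degree-$(m_\ell+1)$ generator $q\in S(\mathfrak h)^W$, and then invokes Corollary \ref{B.3}: since $\partial q/\partial\varpi_i$ has degree $m_\ell$ in $Q$, some $A_w$ with $\ell(w)=m_\ell$ sends it to a nonzero scalar, and non-vanishing in a realization forces non-vanishing in the monoid. That route sidesteps entirely the combinatorial non-vanishing question you are trying to settle by hand, and that is precisely where your argument has two gaps.

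First, you have chosen the wrong coroot. You aim for $\beta_0^\vee$, the coroot of the highest root $\beta_0$ of $\mathfrak g$, and assert $o(\beta_0^\vee)=m_\ell$. This is false outside the simply-laced case: in $B_2$, $\beta_0^\vee=\alpha_1^\vee+\alpha_2^\vee$ has height $2$ while $m_\ell=3$; in $B_n$ one gets $2n-2$ versus $m_\ell=2n-1$; in $C_n$ one gets $n$ versus $2n-1$. The coroot with $o(\cdot)=m_\ell$ is the \emph{highest coroot} $\theta^\vee$, i.e.\ the coroot of the highest \emph{short} root of $\mathfrak g$ (the highest, and long, root of $\mathfrak g^\vee$), not the coroot of the highest root. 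Indeed the paper's explicit $B_n,C_n$ analysis in \ref{6.6}--\ref{6.6.1} shows the unique maximal-length element is $[1,n,1]=\mathscr P(2\varepsilon_1)$ (type $B$), and $2\varepsilon_1$ is the highest coroot, not $\beta_0^\vee=\varepsilon_1+\varepsilon_2$. So even if your chain construction went through, it would terminate at length strictly less than $m_\ell$.

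Second, and more seriously, you never actually establish the non-vanishing along the chain; you assert that the hypotheses of each line of $(14)$ "hold by construction'' and that the right-hand sides are "genuine, nonzero elements.'' The argument "it is $\pm(P_i-P_j)$ divided by nonzero linear forms'' does not yield non-vanishing: nothing forbids $P_i-P_j=0$ in a given realization, and in the abstract monoid defined by the relations $(15)$, whether a length-$r$ word is zero is exactly the question at issue, not something one reads off a single formula. This is the real difficulty in the non-simply-laced setting — it is why the paper proves the simply-laced non-vanishing Lemma \ref{6.2} by a separate induction, and why it needs the case-by-case verifications in \ref{6.6}--\ref{6.7} (and cannot make the naive ``cancel letters from either end of the top word'' heuristic work in $F_4$, as explicitly pointed out there). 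Your sketch provides no substitute for that work, and the bare existence of a ``heights-increasing'' chain of coroots, which you invoke via Lemma \ref{6.9}, is far from sufficient: one still must exclude the vanishing cases in $(15)$ at every step and control the sign bookkeeping. On both counts the proposal is not a proof.
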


\begin {proof}

  Let $q$ be a generator of $S(\mathfrak h)^W$ homogeneous of degree $m_\ell +1$. The $\partial q/\partial \varpi_i:i \in I$ define non-zero elements of $Q$ which we shall denote by the same symbols.  As in \ref {3.5}, set $P_i^0=A_i\partial q/\partial \varpi_i$, for all $i \in I$.  Recall that these elements are all non-zero (in $Q$) and are homogeneous of degree $m_\ell -1$.  By Corollary \ref {B.3} there exist $w \in W, i \in I$ such that $A_w \partial q/\partial \varpi_i$ is a \textit{non-zero} scalar.  Necessarily $\ell(w)=m_\ell$. On the other hand the $A_y \partial q/\partial \varpi_i: y\in W, i \in I$ satisfy the relations of the Zhelobenko monoid (and possibly further relations).  Hence the required assertion.

\end {proof}

\subsection{Types $B_n,C_n$}\label{6.6}

\

It is convenient first to simplify the notation for elements in $\textbf{P}$ valid without restriction on type, namely we set
$$A_{i_1}A_{i_2}\ldots A_{i_r}P_{i_{r+1}}=i_1i_2\ldots i_r (i_{r+1}).$$

In types $A_n,B_n,C_n$ we shall make the further abbreviations
$$[i,j]=i \ i-1 \ldots (j), \quad  [i,n,j]:=i \ i+1 \ldots n-1 \ n \ n-1 \ldots j+1 \ (j). $$

It is clear that $\ell([i,j])=i-j+1, \ell([i,n,j])=2n-i-j+1$, assuming that the elements in question are non-zero.

Let $\textbf{P}^A_n$ denote the Zhelobenko monoid in type $A_n$. One may easily deduce from Lemmas \ref {6.2}, \ref {6.3} the following ``canonical" form for elements of $\textbf{P}^A_n$.

 $$\textbf{P}^A_n= \{[i,j]\}_{n\geq i \geq j \geq 1}.$$

 \

In types $B_n,C_n$, we use the Bourbaki convention that $\alpha_n$ is the simple root which has a distinct length to the remaining simple roots.  Then the first $n-1$ simple roots form a subsystem of type $A_{n-1}$ in $B_n,C_n$.

 Let us use $\textbf{P}^{BC}_n$ to denote the Zhelobenko monoid in types $B_n,C_n$, which is of course the same in both cases.  We describe a canonical form for the elements of $\textbf{P}^{BC}_n$.

 One has the following relations

 $$\begin {array}{rcl}
 A_n[n-1,j]&=&[n,n,j], \\
 A_n[i,n,n]&=&[i,n,n-1]:i<n, \\
 A_{i-1}[i,n,j]&=&[i-1,n,j], \\
 A_{j-1}[i,n,j]&=&[i,n,j-1]:j<i,\\
 A_j[i,n,j]&=&[i,n,j-1]:n>j>i.\\
 \end{array} \eqno{(16)}$$

 All remaining expressions (\textit{not involving $n$}) are zero.  Those not involving $n$ are determined by \ref {6.4}.

 \begin {lemma}

  \

  \

  (i)  $\textbf{P}^{BC}_n \setminus \textbf{P}^A_{n-1}= \{[i,n,j]\}_{i,j =1}^n$.

\

(ii)  Every element of $\textbf{P}^{BC}_n$ has length $\leq 2n-1$ and $[1,n,1]$ is the unique element of length $m_\ell=2n-1$.  It is non-zero.

\

(iii) $[1,n,1]$ is the unique element $P \in \textbf{P}^{BC}_n$ such that $A_iP=0,\forall i =1,2,\ldots,n$.

\

(iv)  The elements defined in (i) are all non-zero.

\

(v)   The elements described in $\textbf{P}^{BC}_n$ are distinct.

 \end {lemma}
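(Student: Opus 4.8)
The strategy is to build everything from the canonical-form relations $(16)$ together with the simply-laced results of \ref{6.4} (which govern the letters $1,\ldots,n-1$), and then to verify non-vanishing by exhibiting an explicit polynomial witness. First I would prove (i): starting from a generator $P_k$ and acting by the $A_i$, I argue that any non-zero word either never involves the letter $n$ — in which case by \ref{6.4} it lies in $\textbf{P}^A_{n-1}$ — or it involves $n$, and then using the first two relations of $(16)$ together with the $A_{n-1}$-type relations one can always bring it to the shape $[i,n,j]$. The point is that once $A_n$ has been applied, the only way to ``grow'' the word further in a non-trivial way is via the last three relations of $(16)$, which produce exactly the $[i,n,j]$ family and nothing else; conversely every such $[i,n,j]$ is reachable. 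I would also record here that $[i,n,j]$ with $j>i$ does not arise as a \emph{starting} reduced form but still occurs via the fifth relation, so the index ranges in (i) are as stated.

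Next, (ii) is a length count. By the formula $\ell([i,n,j])=2n-i-j+1$ recorded just before the lemma, the maximum over $1\le i,j\le n$ is attained uniquely at $i=j=1$, giving $2n-1$; and elements of $\textbf{P}^A_{n-1}$ have length at most $n-1<2n-1$. Since $m_\ell=\rho(\beta_0^\vee)=2n-1$ in types $B_n$ and $C_n$, and since Lemma \ref{6.5} guarantees $\textbf{P}^{BC}_n$ possesses an element of length $m_\ell$, that element must be (the sign class of) $[1,n,1]$, which forces $[1,n,1]\neq 0$. For (iii): if $A_iP=0$ for all $i$ then $P$ cannot be any $[i,n,j]$ with $(i,j)\neq(1,1)$ because one of the relations $(16)$ (namely $A_{i-1}$, $A_{j-1}$ or $A_j$, whichever is applicable) produces a strictly longer non-zero word; likewise no element of $\textbf{P}^A_{n-1}$ except scalars is annihilated by all $A_i$ (here use $A_n[n-1,\ldots(\,1)]\ne 0$ together with \ref{6.4}(iii)), and a scalar is not in $\textbf{P}$. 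Hence $[1,n,1]$ is the only candidate, and it \emph{is} annihilated by every $A_i$ since any further application would exceed length $m_\ell$, contradicting the maximality just proved.

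The genuine obstacle is (iv) (hence also (v)): proving that \emph{every} $[i,n,j]$ is non-zero, not merely the top one. I would handle this by an explicit computation in $Q=S(\mathfrak h)/L$ rather than chasing the rational expressions $(14)$ directly. Take $q$ a homogeneous $W$-invariant generator of degree $2n$; by Corollary \ref{3.5} the elements $P^0_k=A_k(\partial q/\partial\varpi_k)$ are all non-zero in $Q$ and satisfy the Zhelobenko relations, and by Corollary \ref{B.3} applied repeatedly one shows that the orbit of these $P^0_k$ under the $A_i$ contains non-zero elements of every length from $0$ up to $m_\ell=2n-1$. Matching lengths with (i)–(ii), every $[i,n,j]$ must then be realized as a non-zero element of $Q$, so it is non-zero as an element of $\textbf{P}$ (a formal word non-zero in any specialization is non-zero). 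For (v), distinctness: two canonical forms $[i,n,j]$ and $[i',n,j']$ have the same support $\{\,j,\ldots,n\,\}\cup\{\,j',\ldots\,\}$-type data and the same length only if $(i,j)=(i',j')$ — here one reads off $j$ as $\min(\mathrm{Supp})$ on the ``descending tail'' and $i$ from the length via $\ell=2n-i-j+1$; elements of $\textbf{P}^A_{n-1}$ are separated from the $[i,n,j]$'s by the presence or absence of the letter $n$ in their support, and within $\textbf{P}^A_{n-1}$ distinctness is \ref{6.4}. The delicate point throughout is that the relations $(15)$ may in principle force unexpected coincidences or cancellations; the safeguard is that each claimed non-zero word is witnessed by a concrete non-zero value in $Q$, so no hidden relation can collapse it.
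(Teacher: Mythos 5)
Your treatments of (i) and (ii) follow the paper's line closely and are sound: (i) by observing the generators lie in the proposed set and that $(16)$ makes it stable under the $A_i$; (ii) by maximizing the length formula and invoking Lemma~\ref{6.5} for the non-vanishing of the top element.  Your route to (iii) is acceptable as well.  The genuine problems are in (iv) and (v).

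For (iv), your plan is to realize the words in $Q=S(\mathfrak h)/L$ and invoke Corollary~\ref{B.3} to produce non-zero elements ``of every length.''  But Corollary~\ref{B.3}, applied to one partial derivative of an invariant of degree $2n$, only guarantees \emph{one} non-zero word in each length $0,\ldots,m_\ell$; it does not tell you which canonical form that word reduces to.  In types $B_n,C_n$ there are several canonical forms $[i,n,j]$ of the same length (for example $[i,n,j]$ and $[j,n,i]$ with $i\ne j$ have the same length and the same support, and there are also forms with different supports sharing a length), so ``matching lengths'' cannot single them all out as non-zero.  The paper instead deduces (iv) from (iii) together with the non-vanishing of $[1,n,1]$ established in (ii): every $[i,n,j]\ne[1,n,1]$ has, by $(16)$, some $A_k$ whose application increases length, and iterating reaches $[1,n,1]$; since $[1,n,1]\ne0$, none of the intermediate words can be $0$.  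That argument needs no excursion to $Q$.

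For (v), you propose to read off $j$ from the minimum of the ``descending tail'' and $i$ from the length.  The descending tail is a feature of one particular presentation of the word, not an invariant under the relations $(15)$, so this does not separate words a priori.  The genuine invariants available are length and support, and these fail to separate $[i,n,j]$ from $[j,n,i]$ when $i\ne j$.  The paper handles precisely this case by a small induction: if $[i,n,m-i]=[j,n,m-j]$ with $i<j$, apply $A_i$ and use the length-plus-$(16)$ constraints to reduce to $i=1$; a support comparison then forces $m-j=1$, and an application of $A_{m-2}$ (when $m>2$) kills one side by $(16)$ but not the other, giving the contradiction.  Your sketch skips the only nontrivial case.
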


 \begin {proof}  It is clear that the proposed expression for $\textbf{P}^{BC}_n$ contains the generators.  By $(16)$ it is stable under the BGG operators. Hence it must equal all of  $ \textbf{P}^{BC}_n$.

  The first part of (ii) obtains from (i) and our formula for length. The second part follows from Lemma \ref {6.5}.  (iii) follows from the relations in $(16)$. (iv) follows from (iii) and the second part of (ii).

  The relations in $(15)$ imply that for all $P,P^\prime \in \textbf{P}$ one has $P=P^\prime$ only if $\ell(P) = \ell (P^\prime)$ and Supp $P= \text{Supp} \ P^\prime$.  Fix $m$ a positive integer $\leq n$.  Thus it is enough to show that $[i,n,m-i]=[j,n,m-j]$ implies $i=j$. If not  we can assume $i<j$.  Apply $A_i$ to this equality.  If the right hand side is non-zero it must equal $[j,n,n-j-1]$, through length and the relations in $(16)$.  By this means we are reduced to the case $i=1$. Then comparison of supports forces $m-j=1$, so we obtain $[1,n,m-1]=[m-1,n,1]$.  If $m=2$, then $j=1$ which is a contradiction.  If $m>2$, apply $A_{m-2}$ to both sides.  Then $A_{m-2}[1,n,m-1]=0$, by $(16)$, which is again a contradiction.

 \end {proof}

\subsection{Types $B_n,C_n$}\label{6.6.1}

\

 Let us describe the map $\mathscr P$ in types $B_n$ and $C_n$.  These will be slightly different.  Use the Bourbaki notation \cite [Planches I-X]{B} for the roots.

 Consider first $B_n$.  It is convenient to use the positive roots of $C_n$ to describe the positive coroots of $B_n$.  These take the form $\varepsilon_i-\varepsilon_j:1\leq i <j \leq n$, which form the positive roots of a subsystem of type $A_{n-1}$ together with  $\varepsilon_i+\varepsilon_j:1\leq i\leq j\leq n$.

 Define $\mathscr P$ by
 $$\mathscr P(\varepsilon_j-\varepsilon_{i+1})=[i,j]:n-1\geq i \geq j \geq 1, \quad \mathscr P(\varepsilon_j+\varepsilon_i)=[i,n,j] :n\geq i \geq j \geq 1).$$

  By \cite [Planche II]{B} one has $\rho(\varepsilon_i)=n-i+1/2$ and so $\rho(\varepsilon_j-\varepsilon_{i+1})=i+1-j, \rho(\varepsilon_j+\varepsilon_i)=n-i+n-j+1$.  Consequently $\ell(\mathscr P(\gamma^\vee))=\rho(\gamma^\vee)$, for all $\gamma \in \Delta^+$.

 Consider $C_n$.  The positive coroots take the form $\varepsilon_i-\varepsilon_j:1\leq i <j \leq n$, which form the positive roots of a subsystem of type $A_{n-1}$ together with  $\varepsilon_i+\varepsilon_j:1\leq i < j\leq n, \varepsilon_i: i=1,2,\ldots,n$.

 Define $\mathscr P$ as before on the subsystem of type $A_{n-1}$.  In contrast we set
$$\mathscr P(\varepsilon_i+\varepsilon_j)=[i,n,j-1]:1\leq i < j \leq n), \quad \mathscr P(\varepsilon_i)=[i,n,n]:1\leq i \leq n).$$

By \cite [Planche III]{B} one has $\rho(\varepsilon_i)=n-i+1$ and so $\rho(\varepsilon_i+\varepsilon_j)=n-i+n-j+2$, for all $1\leq i < j \leq n$. As before we conclude that $\ell(\mathscr P(\gamma^\vee))=\rho(\gamma^\vee)$, for all $\gamma \in \Delta^+$.

The relations in $(16)$ translate to give

$$\begin {array}{rclrcl}
 A_n\mathscr P(\varepsilon_j-\varepsilon_n)&=&\mathscr P(\varepsilon_j+\varepsilon_n)&:&j<n, \quad &\text{type} \  B_n,\\
 A_n\mathscr P(\varepsilon_i)&=&\mathscr P(\varepsilon_i+\varepsilon_n)&:&i<n, \quad &\text {type} \  C_n, \\
 A_{i-1}\mathscr P(\varepsilon_j+\varepsilon_i)&=&\mathscr P(\varepsilon_j+\varepsilon_{i-1})&:&j<i, \quad &\text {type} \ B_n, \\
 A_{i-1}\mathscr P(\varepsilon_i+\varepsilon_{j+1})&=&\mathscr P(\varepsilon_{i-1}+\varepsilon_{j+1})&:&i \leq j, \quad &\text {type} \ C_n,\\
  A_j\mathscr P(\varepsilon_i+\varepsilon_{j+1})&=&\mathscr P(\varepsilon_i+\varepsilon_j)&:&i \leq j, \quad &\text {type} \ C_n,\\
  A_{j-1}\mathscr P(\varepsilon_j+\varepsilon_i)&=&\mathscr P(\varepsilon_{j-1}+\varepsilon_i)&:&j<i, \quad &\text {type} \ B_n. \\

 \end{array} \eqno{(17)}$$

 All other expressions vanish \textit{except} $A_{i-1}\mathscr P(2\varepsilon_i)=[i-1,n,i]:1<i\leq n$ in type $B_n$ and $A_n\mathscr P(\varepsilon_j-\varepsilon_n)=[n,n,j]:j<n$ in type $C_n$.  In these cases the left hand side does not lie in the image of $\mathscr P$.

We have the following analogue of the result described in \ref {6.4} for the simply-laced case.

\begin {lemma} (Types $B,C$).  Take $\alpha \in \pi, \gamma^\vee \in \Delta^{\vee+}$. Suppose $A_{\alpha^\vee} \mathscr P(\gamma^\vee) \in \im \mathscr P$ and $\alpha(\gamma^\vee)=-1$.  Then $A_{\alpha^\vee} \mathscr P(\gamma^\vee)=\mathscr P(s_\alpha\gamma^\vee)$.
\end {lemma}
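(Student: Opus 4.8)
The plan is to deduce the statement from a finite inspection of the relations $(17)$, together with the type $A_{n-1}$ sub-case already controlled by Lemma \ref{6.4}. First I would note that every element of $\im \mathscr P$ is non-zero (Lemma \ref{6.6}(ii),(iv), the elements of $\textbf P^A_{n-1}$ being non-zero by construction), so the hypothesis $A_{\alpha^\vee}\mathscr P(\gamma^\vee)\in\im\mathscr P$ in particular forces $A_{\alpha^\vee}\mathscr P(\gamma^\vee)\neq 0$; this is exactly what excludes the possibility that $\mathscr P(s_\alpha\gamma^\vee)$ is being required to vanish. Now the expressions $A_{\alpha^\vee}\mathscr P(\gamma^\vee)$ that do not vanish are, by the enumeration following $(16)$ and $(17)$, exactly: the ones internal to the type $A_{n-1}$ sub-system (where $\mathscr P$ agrees with the type $A_{n-1}$ bijection and Lemma \ref{6.4}(ii) applies, the sub-monoid on $A_1,\dots,A_{n-1}$ being simply-laced); those listed in $(17)$; and the two ``exceptional'' ones $A_{i-1}\mathscr P(2\varepsilon_i)$ in type $B_n$ and $A_n\mathscr P(\varepsilon_j-\varepsilon_n)$ in type $C_n$. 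The last two do not lie in $\im\mathscr P$ and are therefore excluded by the hypothesis, so the pair $(\alpha,\gamma^\vee)$ must come from $(17)$ or from the type $A_{n-1}$ sub-system.

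The key simplification is that the condition $\alpha(\gamma^\vee)=-1$ turns the reflection into a translation: on $\mathfrak h$ one has $s_\alpha\gamma^\vee=\gamma^\vee-\alpha(\gamma^\vee)\alpha^\vee=\gamma^\vee+\alpha^\vee$, which is again a positive coroot (indeed $\gamma^\vee\neq\alpha^\vee$ since $\alpha(\alpha^\vee)=2$). So the assertion reduces to checking, for each relation $A_{\alpha^\vee}\mathscr P(\gamma^\vee)=\mathscr P(\delta^\vee)$ that actually occurs, that $\delta^\vee=\gamma^\vee+\alpha^\vee$. In the type $A_{n-1}$ sub-system this is Lemma \ref{6.4}(ii) outright; one may note for consistency that there $\alpha(\gamma^\vee)=-1$ is anyway forced by Lemma \ref{6.2} whenever $A_{\alpha^\vee}\mathscr P(\gamma^\vee)\neq 0$.

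It then remains to run through the six families in $(17)$. Imposing the hypothesis $\alpha(\gamma^\vee)=-1$ discards precisely the ``diagonal'' sub-cases: $\gamma^\vee=\varepsilon_i$ with $\alpha=\alpha_n$ in type $C_n$, $\gamma^\vee=\varepsilon_{i-1}+\varepsilon_i$ with $\alpha=\alpha_{i-1}$ in type $B_n$, and $\gamma^\vee=\varepsilon_j+\varepsilon_{j+1}$ with $\alpha=\alpha_j$ in type $C_n$ -- in each of these $\alpha(\gamma^\vee)=0$, so $s_\alpha\gamma^\vee=\gamma^\vee$ while $A_{\alpha^\vee}\mathscr P(\gamma^\vee)$ is a genuinely different member of $\im\mathscr P$ (respectively $\mathscr P(\varepsilon_i+\varepsilon_n)$, $\mathscr P(2\varepsilon_{i-1})$, $\mathscr P(2\varepsilon_j)$), which is exactly why the hypothesis cannot be dropped. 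For every surviving sub-case the identity $\delta^\vee=\gamma^\vee+\alpha^\vee$ is a one-line substitution using $\alpha^\vee_k=\varepsilon_k-\varepsilon_{k+1}$ for $k<n$ and $\alpha^\vee_n=2\varepsilon_n$ (type $B_n$), $\alpha^\vee_n=\varepsilon_n$ (type $C_n$): e.g. in type $B_n$, $(\varepsilon_j-\varepsilon_n)+\alpha^\vee_n=\varepsilon_j+\varepsilon_n$, $(\varepsilon_j+\varepsilon_i)+\alpha^\vee_{i-1}=\varepsilon_j+\varepsilon_{i-1}$, $(\varepsilon_j+\varepsilon_i)+\alpha^\vee_{j-1}=\varepsilon_{j-1}+\varepsilon_i$; and in type $C_n$, $(\varepsilon_i+\varepsilon_{j+1})+\alpha^\vee_{i-1}=\varepsilon_{i-1}+\varepsilon_{j+1}$, $(\varepsilon_i+\varepsilon_{j+1})+\alpha^\vee_j=\varepsilon_i+\varepsilon_j$ -- matching the right-hand sides in $(17)$.

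There is no conceptual obstacle here; the proof is a piece of bookkeeping. The only real care is needed in keeping the Bourbaki conventions for $B_n$ and $C_n$ apart -- in particular the fact that $\alpha^\vee_n$ is $2\varepsilon_n$ in one case and $\varepsilon_n$ in the other -- and in recognising that it is exactly the normalisation $\alpha(\gamma^\vee)=-1$ that makes $s_\alpha$ act on $\gamma^\vee$ by adding $\alpha^\vee$; the borderline values $\alpha(\gamma^\vee)=0$ (the diagonal sub-cases above) and $\alpha(\gamma^\vee)=-2$ (the two exceptional expressions, where indeed $A_{\alpha^\vee}\mathscr P(\gamma^\vee)\notin\im\mathscr P$) are the only ways the conclusion could fail, and the two hypotheses are precisely tailored to rule them out.
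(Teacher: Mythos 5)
Your proposal is correct and is exactly the ``straightforward verification using $(17)$'' that the paper's one-line proof alludes to, carried out in full: you correctly reduce to the list in $(17)$ plus the type $A_{n-1}$ sub-case handled by Lemma \ref{6.4}(ii), observe that $\alpha(\gamma^\vee)=-1$ turns $s_\alpha$ into translation by $\alpha^\vee$, and check coordinate-by-coordinate that the right-hand sides of $(17)$ equal $\gamma^\vee+\alpha^\vee$ while the excluded sub-cases are precisely those with $\alpha(\gamma^\vee)\in\{0,-2\}$. Same approach as the paper, just written out.
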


\begin {proof}  This is a straightforward verification using $(17)$.
\end {proof}

\textbf{Remark 1}.   Notice that line (2) of $(17)$ and line $3$ of $(17)$ for $j=i-1$ do \textit{not} give a similar conclusion; but rather what is stated in Lemma \ref {6.9}(i).

\textbf{Remark 2}.  This result is illustrated for $B_4,C_4$ is Figures $1,2$.  One might compare this to the corresponding (much simpler result) in the simply-laced case illustrated in Figures $4,5$.

 \subsection{Type $F_4$}\label{6.7}

 \

In all cases studied so far the number of elements of the Zhelobenko monoid decreases with length.    Again for the unbranched Dynkin diagrams in all cases so far one may check from the above description of the Zhelobenko monoid, that the elements of the monoid may be obtained from a presentation of the unique largest element by successive cancelling of factors on both sides.  For example in type $B_3$, the unique element of length five is $1232(1)$, those of lengths four are $123(2),232(1)$, those of lengths three are $12(3),23(2),32(1)$, those of length two are $1(2)=2(1),2(3),3(2)$  and those of length one are $(1),(2),(3)$.

This procedure does not work in type $F_4$.   We can write the unique longest element (in type $F_4$) in the form $1234321323(4)$.  Whilst we can cancel off from the left, cancelling off from the right would give $123432132(3)$.  However this element is zero because the factor $32132(3)$ is an element of length $6$ in type $B_3$, whilst $m_\ell=5$.

Again Type $F_4$ is the unique case which does \textit{not} have the property that the number of elements in the Zhelobenko monoid decreases with length.

In view of the above we simply calculated the Zhelobenko monoid by brute force.  We found that the number of elements of length $i=1,2,\ldots,11$ is given by the following sequence $4,4,5,6,5,4,4,4,3,2,1$ which adds to $42$. In particular it admits a unique element of length $11$ which just happens to be the largest exponent in type $F_4$ ! (This element is non-zero by Lemma \ref {6.5}. The argument in Lemma \ref {6.6} was used to prove that the remaining elements are non-zero.) Moreover one checks that it is the unique element annihilated by the $A_i:i=1,2,3,4$.  One may remark that $42$ does not divide the order of the Weyl group and so even numerically the Zhelobenko does not identify with a Weyl group quotient (defined say by a stabilizer of a dominant weight) given its induced (weak left) Bruhat order though there are some superficial resemblances.

In addition to the above we found an injective map $\mathscr P$ of the set of positive coroots to the  Zhelobenko monoid and verified that it satisfied the conclusion of Lemma \ref {6.6.1}.  Unfortunately for technical reasons we could not include this data in Figure $3$ except that the image of $\mathscr P$ is given by the unencircled vertices. Thus we drew a second version of Figure $3$ designated as Figure $3^*$ in which this addition data is included and in which other data is omitted.  The reader needs to imagine these two sets of data superimposed.  Then the above results are easily verified using this presentation.

Risking repetition we summarize the above as the

\begin {lemma} ($\mathfrak g$ simple not of type $G_2$).  There exists an injective map $\mathscr P: \Delta^{\vee +} \rightarrow \textbf{P}$ satisfying
$A_{\alpha^\vee} \mathscr P(\gamma^\vee)=\mathscr P(s_\alpha\gamma^\vee)$, for all $\alpha \in \pi, \gamma^\vee \in \Delta^{\vee +}$, whenever the left hand side lies in the image of $\mathscr P$ \textbf{and} when $\alpha(\gamma^\vee)=-1$.

\end {lemma}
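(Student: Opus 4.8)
The plan is to obtain this Lemma by assembling the three cases already disposed of; as the word ``summarize'' in the statement suggests, no new idea is involved beyond checking that the hypotheses here are covered by what was proved in each type. I would organise the argument case by case on the type of $\mathfrak g$.

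First, the simply-laced types $A_n, D_n, E_6, E_7, E_8$. Here each root coincides with its coroot, so $\Delta^{\vee +}$ is identified with $\Delta^+$ and $\alpha^\vee$ with $\alpha$, and the bijection $\mathscr P : \Delta^+ \iso \textbf{P}$ of Lemma \ref{6.4} is available; in particular $\textbf{P} = \im\mathscr P$. If $A_{\alpha^\vee}\mathscr P(\gamma^\vee)$ lies in $\im\mathscr P$ then it is non-zero, so Lemma \ref{6.4}(ii) gives $A_\alpha\mathscr P(\gamma) = \mathscr P(s_\alpha\gamma)$ at once; the extra hypothesis $\alpha(\gamma^\vee) = -1$ is not even needed, and injectivity is the bijectivity already recorded. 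So this case is immediate.

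Next, types $B_n$ and $C_n$. Here I would take $\mathscr P$ to be the explicit map written down in \ref{6.6.1}, with its two slightly different recipes reflecting that the positive coroots of $B_n$ are the positive roots of $C_n$ and conversely. Its injectivity follows from the length identity $\ell(\mathscr P(\gamma^\vee)) = o(\gamma^\vee) = \rho(\gamma^\vee)$ verified in \ref{6.6.1} together with the distinctness of the listed elements of $\textbf{P}^{BC}_n$ established in Lemma \ref{6.6}(v). The relations $(17)$, deduced from $(16)$, are exactly the instances of $A_{\alpha^\vee}\mathscr P(\gamma^\vee) = \mathscr P(s_\alpha\gamma^\vee)$, and Lemma \ref{6.6.1} is precisely the assertion that these are all the cases in which the left-hand side both lies in $\im\mathscr P$ and satisfies $\alpha(\gamma^\vee) = -1$. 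The two ``exceptional'' non-zero expressions ($A_{i-1}\mathscr P(2\varepsilon_i)$ in type $B_n$ and $A_n\mathscr P(\varepsilon_j-\varepsilon_n)$ in type $C_n$) lie outside $\im\mathscr P$ and, as noted in Remark 1, have $\alpha(\gamma^\vee) \ne -1$, so they are legitimately excluded by the hypotheses; the verification from $(17)$ is then routine.

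Finally, type $F_4$, which is where the only genuine obstacle lies: there is no uniform argument and one must fall back on the brute-force determination of $\textbf{P}$ carried out in \ref{6.7}. Concretely I would use that $\textbf{P}$ has $42$ elements with length distribution $4,4,5,6,5,4,4,4,3,2,1$, exhibit the injection of the $24$ positive coroots of $F_4$ into $\textbf{P}$ whose image is the set of unencircled vertices of Figure $3$ (with the $A_i$-action displayed in Figure $3^*$), and check the required relation edge by edge on that finite diagram. Type $G_2$ is excluded from the statement, so nothing is needed there. Thus the $F_4$ check is purely mechanical but must be done by hand, while the $B_n$, $C_n$ verification reduces to quoting $(16)$, $(17)$ and Lemma \ref{6.6.1}, and the simply-laced case reduces to Lemma \ref{6.4}.
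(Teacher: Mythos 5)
Your proposal is correct and follows essentially the same route as the paper: as the phrase ``Risking repetition we summarize the above as the [Lemma]'' indicates, the paper's proof \emph{is} the preceding case-by-case material — Lemma \ref{6.4} for the simply-laced types, the explicit map, relations $(16)$--$(17)$ and Lemma \ref{6.6.1} for $B_n,C_n$, and the brute-force computation recorded in Figures $3$ and $3^*$ for $F_4$ — and you have correctly identified and assembled exactly those ingredients, including the observation that in the simply-laced case the condition $\alpha(\gamma^\vee)=-1$ is automatic once $A_\alpha\mathscr P(\gamma)\neq 0$.
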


\subsection{}\label{6.8}

Suppose $\mathfrak g$ not of type $G_2$.

\begin {lemma}

\

(i) \ Suppose $\alpha \in \pi, \gamma \in \Delta^+$ are such that $\gamma^\vee \pm \alpha^\vee$ are both coroots.  Then $A_{\alpha^\vee}\mathscr P(\gamma^\vee)=\mathscr P(\alpha^\vee +\gamma^\vee)$.

\

(ii) \ Suppose $\alpha \in \pi, \gamma \in \Delta^+$ are such that neither $\gamma^\vee \pm \alpha^\vee$ are coroots.  Then  $A_{\alpha^\vee}\mathscr P(\gamma^\vee)=0$.
\end {lemma}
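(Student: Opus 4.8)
The plan is to reduce both parts to the explicit combinatorial descriptions of $\textbf{P}$ and $\mathscr P$ assembled in \ref{6.4}--\ref{6.7}, after first pinning down the value of $\alpha(\gamma^\vee)$ by elementary root-string considerations in the root system $\Delta^\vee$ of $\mathfrak g^\vee$. Since $\mathfrak g$ (hence $\mathfrak g^\vee$) is not of type $G_2$, every root string in $\Delta^\vee$ has at most three members. Thus in case (i) the $\alpha^\vee$-string through $\gamma^\vee$ is forced to be exactly $\{\gamma^\vee-\alpha^\vee,\ \gamma^\vee,\ \gamma^\vee+\alpha^\vee\}$, so $\alpha(\gamma^\vee)=0$ and $s_\alpha\gamma^\vee=\gamma^\vee$. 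In case (ii), either $\gamma^\vee=\alpha^\vee$ — in which case $A_{\alpha^\vee}\mathscr P(\gamma^\vee)=A_iP_i=0$ immediately from $(15)$ — or else, by the standard fact that $\alpha(\gamma^\vee)>0$ (resp. $<0$) forces $\gamma^\vee-\alpha^\vee$ (resp. $\gamma^\vee+\alpha^\vee$) into $\Delta^\vee$, one again gets $\alpha(\gamma^\vee)=0$, now with $\gamma^\vee$ strongly orthogonal to $\alpha^\vee$. So the remaining content is: case (i) with a genuine three-element string, and case (ii) with $\gamma^\vee$ strongly orthogonal to $\alpha^\vee$.

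For $\mathfrak g$ simply-laced, case (i) is vacuous since then all root strings have at most two members. In case (ii), $\mathscr P$ is indexed by $\Delta^+=\Delta^{\vee+}$ and $\gamma^\vee,\alpha^\vee$ are orthogonal; applying Lemma \ref{6.4}(ii) to $\gamma:=\gamma^\vee$ shows that $A_{\alpha^\vee}\mathscr P(\gamma^\vee)$ is either $0$ or $\mathscr P(s_\alpha\gamma^\vee)=\mathscr P(\gamma^\vee)$. The latter is impossible, because by Lemma \ref{6.4}(i) it has length $\ell(\gamma^\vee)$, whereas a non-zero $A_{\alpha^\vee}\mathscr P(\gamma^\vee)$ has length $\ell(\gamma^\vee)+1$. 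Hence $A_{\alpha^\vee}\mathscr P(\gamma^\vee)=0$, as required.

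It remains to treat types $B_n, C_n$ and $F_4$, and here I would read both assertions off the explicit data of \ref{6.6}--\ref{6.7}. For $B_n$ and $C_n$ one uses the description of $\mathscr P$ in \ref{6.6.1} together with the relations $(16)$/$(17)$ and the accompanying statement that every other value $A_{\alpha^\vee}\mathscr P(\gamma^\vee)$ vanishes apart from the two named exceptions. Translating the hypotheses "$\gamma^\vee\pm\alpha^\vee$ both coroots" and "neither a coroot" into the $\varepsilon_i$-coordinates, one checks that the case-(i) pairs are precisely those for which $(16)$/$(17)$ produces a non-zero value with $\alpha(\gamma^\vee)=0$, and that this value is $\mathscr P(\gamma^\vee+\alpha^\vee)$; while the strongly orthogonal pairs fall under the "all other expressions vanish" clause. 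The two exceptional relations both have $\alpha(\gamma^\vee)=-2$, so they satisfy neither hypothesis and cause no conflict. For $F_4$ the same verification is carried out against the brute-force table of the Zhelobenko monoid and the map $\mathscr P$ recorded in Figure $3^*$.

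I expect the essential difficulty to be bookkeeping rather than conceptual: confirming exhaustiveness of the coordinate case analysis for $B_n, C_n$, and checking the $F_4$ table (including the signs forced by $(15)$). The point worth flagging is that case (i) genuinely lies outside the reach of Lemma \ref{6.7}: there $\alpha(\gamma^\vee)=0$, so $s_\alpha\gamma^\vee=\gamma^\vee$ while $\ell(A_{\alpha^\vee}\mathscr P(\gamma^\vee))=\ell(\mathscr P(\gamma^\vee))+1$, which is precisely why the conclusion must be stated with $\gamma^\vee+\alpha^\vee$ in place of $s_\alpha\gamma^\vee$ and why, once non-vanishing is read off the tables, a length-and-support argument (uniqueness of the positive coroot of height $\ell(\mathscr P(\gamma^\vee))+1$ with the appropriate support) is what actually identifies $A_{\alpha^\vee}\mathscr P(\gamma^\vee)$ with $\mathscr P(\gamma^\vee+\alpha^\vee)$.
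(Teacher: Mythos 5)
Your proposal is correct and follows essentially the same route as the paper: reduce to the observation that the hypotheses force $\alpha(\gamma^\vee)=0$ (with (i) vacuous in the simply-laced case), deal with (ii) in the simply-laced case via Lemma \ref{6.4}(ii) plus the length obstruction, and handle the non-simply-laced cases by reading the explicit descriptions of $\mathscr P$ in \ref{6.6}--\ref{6.7}. The only presentational difference is that the paper carries out the $B_n$, $C_n$, $F_4$ enumeration for (i) explicitly (e.g.\ $A_i[i+1,n,i]=[i,n,i]=\mathscr P(\alpha^\vee+\gamma^\vee)$) rather than invoking a length-and-support identification, but this is a matter of bookkeeping rather than substance.
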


\begin {proof} The hypothesis of (i) implies that $\mathfrak g$ is not simply-laced and moreover that $\alpha^\vee,\gamma^\vee$ are short orthogonal coroots.  In particular $\gamma^\vee \pm \alpha^\vee$ are both long coroots.

In type $B_n$ only the $2\varepsilon_i:i=1,2,\ldots,n$ are long positive coroots, so there exists $i \in \{1,2,\ldots,n-1\}$ such that $\alpha^\vee= \varepsilon_i-\varepsilon_{i+1},\gamma^\vee =\varepsilon_i+\varepsilon_{i+1}$.  Then $A_{\alpha^\vee}\mathscr P(\gamma^\vee)=A_i[i+1,n,i]=[i,n,i] = \mathscr P(\alpha^\vee + \gamma^\vee)$, as required.

In type $C_n$ only the $\varepsilon_i:i=1,2,\ldots,n$ are short positive coroots, so $\alpha^\vee =\varepsilon_n$ and there exists $i \in \{1,2,\ldots,n-1\}$ such that $\gamma^\vee =\varepsilon_i$. Then $A_{\alpha^\vee}\mathscr P(\gamma^\vee)=A_n[i,n,n]=[i,n,n-1] = \mathscr P(\alpha^\vee + \gamma^\vee)$, as required.

In type $F_4$, using the \textit{reverse} labelling compared to \cite [Planche VIII]{B}, the short positive coroots take the form $\varepsilon_i:i=1,2,3,4, \frac {1}{2}(\varepsilon_1\pm \varepsilon_2 \pm \varepsilon_3 \pm \varepsilon_4)$.  This forces $\alpha^\vee = \varepsilon_4=\alpha_3^\vee$ and $\gamma^\vee = \varepsilon_i:i=1,2,4$.  Writing $\alpha_i^\vee:i=1,2,3,4$ as $a,b,c,d$, this means that $\alpha^\vee=c$ and $\gamma^\vee = b+c,a+b+c, a+2b+3c+d$.  Then the assertion can be read off from Figure $3$ and Figure $3^*$.

The hypothesis of (ii) implies that $\alpha(\gamma^\vee)=0$.  Thus the assertion holds by Lemma \ref {6.4}(ii) in the simply-laced case.  In types $B_n,C_n$ the assertion follows from the fact that $(17)$ together with the remarks following it give all non-vanishing relations.  In type $F_4$ the assertion can be read off from Figure $3$ and Figure $3^*$.
\end {proof}

\textbf{Remark}.  Suppose $\alpha \in \pi, \gamma^\vee \in \Delta^{\vee +}$ satisfy $\alpha(\gamma^\vee)=-2$.  Then $A_\alpha\mathscr P(\gamma^\vee)$ cannot equal $\mathscr P(s_\alpha \gamma^\vee)$ due to length considerations. In general the left hand side does not belong to the image of $\mathscr P$. (See the line following $(17)$ for example.)  This difficulty imposes additional mental gymnastics.  However apart from \ref {6.8}, \ref {7.4} and \ref {7.5} our analysis in quite intrinsic not relying on the details of any particular case.

\subsection{}\label{6.9}

We recall a simple fact which is true for all $\mathfrak g$ simple, though it usually stated for roots rather than coroots.

\begin {lemma}  Every positive coroot $\gamma^\vee$ can be written in the form
$s_{j_1}s_{j_2}\ldots s_{j_s}\alpha_{j_{s+1}}$ with $o(s_t \ldots s_{j_s}\alpha_{j_{s+1}})$ strictly decreasing with $t$.
\end {lemma}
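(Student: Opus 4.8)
The plan is to recognise this as the classical fact that, in any reduced root system, every positive root is obtained from a simple root by a chain of simple reflections along which the height strictly decreases, applied here to the dual root system $\Delta^\vee$: the latter has base $\pi^\vee=\{\alpha_i^\vee\}_{i\in I}$, Weyl group again $W$, and height function precisely $o(\cdot)$. So the whole content is a one-variable induction, which I would run directly on the coroots to keep the argument self-contained.

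I would induct on $o(\gamma^\vee)$. If $o(\gamma^\vee)=1$ then $\gamma^\vee$ is a simple coroot and we take $s=0$. Suppose $o(\gamma^\vee)\geq 2$. The one step that needs an argument is the existence of a simple coroot $\alpha_{j_1}^\vee$ with $\alpha_{j_1}(\gamma^\vee)>0$. Write $\gamma^\vee=\sum_{i\in I}c_i\alpha_i^\vee$ with all $c_i\in\mathbb Z_{\geq 0}$ (a positive coroot is a nonnegative integral combination of the simple coroots, since $\pi^\vee$ is a base of $\Delta^\vee$). Positive-definiteness of the Cartan form gives $0<(\gamma^\vee,\gamma^\vee)=\sum_{i\in I}c_i(\alpha_i^\vee,\gamma^\vee)$, and as $(\alpha_i^\vee,\gamma^\vee)$ has the same sign as $\alpha_i(\gamma^\vee)$, some $\alpha_{j_1}(\gamma^\vee)$ must be strictly positive. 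Since $o(\gamma^\vee)\geq 2$ we have $\gamma^\vee\neq\alpha_{j_1}^\vee$, so $\beta^\vee:=s_{j_1}\gamma^\vee=\gamma^\vee-\alpha_{j_1}(\gamma^\vee)\,\alpha_{j_1}^\vee$ is again a positive coroot (a simple reflection permutes $\Delta^{\vee+}\setminus\{\alpha_{j_1}^\vee\}$) with $o(\beta^\vee)=o(\gamma^\vee)-\alpha_{j_1}(\gamma^\vee)<o(\gamma^\vee)$. Applying the inductive hypothesis to $\beta^\vee$ gives $\beta^\vee=s_{j_2}\cdots s_{j_s}\alpha_{j_{s+1}}$ with the corresponding values of $o$ strictly decreasing; prepending $s_{j_1}$ and using $o(\gamma^\vee)>o(\beta^\vee)$ finishes the induction.

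I do not expect a genuine obstacle here: this is a standard lemma, essentially the assertion that every positive root lies in the $W$-orbit of a simple root, refined to control the height. The only inputs requiring mild care are the two standard facts about the coroot system $\Delta^\vee$ used above — that its positive elements (the images of $\Delta^+$) are exactly the nonnegative integral combinations of the simple coroots, and that a simple reflection permutes $\Delta^{\vee+}\setminus\{\alpha_{j_1}^\vee\}$ — together with the sign compatibility between $\alpha_i(\gamma^\vee)$ and the Cartan pairing $(\alpha_i^\vee,\gamma^\vee)$ that makes the existence step work. Everything else is routine.
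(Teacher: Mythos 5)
Your proof is correct. The paper states this lemma as a recalled ``simple fact'' without supplying a proof; your argument is the standard one (induct on $o(\gamma^\vee)$, use positive-definiteness of the Cartan form to find a simple $\alpha_{j_1}$ with $\alpha_{j_1}(\gamma^\vee)>0$, apply $s_{j_1}$ and invoke that a simple reflection permutes $\Delta^{\vee+}\setminus\{\alpha^\vee_{j_1}\}$), and it fills the gap cleanly.
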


\section{The $\mathscr P(\gamma^\vee):\gamma \in \Delta^+$ as Polynomials}

\subsection{}\label{7.1}

Assume $\mathfrak g$ not of type $G_2$.  In this section we introduce polynomials $P_{\gamma^\vee}:\gamma^\vee \in \Delta^{\vee +}$ which form a realization of the Zhelobenko elements $\mathscr P(\gamma^\vee)$. Eventually we show (Proposition \ref {8.2}) that they satisfy some rather simple recurrence relations $(24)$ when evaluated at $\rho$.  This is relatively straightforward in the simply-laced case and little tedious for types $B_n,C_n,F_4$.

 In Section $8$, the above results are used to establish the truth of the analogue Kostant problem.  Basically this involves some linear algebra combined with a knowledge of how exponents behave.  As already pointed out in \ref {C.2} the analogue Kostant problem is trivial in type $G_2$ so it is unnecessary to adapt the present computations to include this case.  Nevertheless the key result (Proposition \ref {8.2}) easily adapts to this case for the trivial reason that $\{\gamma^\vee \in \Delta^{\vee+}|o(\gamma^\vee)=r\}$ has cardinality $\leq 1$ for $r\geq 2$ if $\mathfrak g$ has rank $\leq 2$.

Thus in the remainder of this section and all but the final part (namely \ref {8.5}) of Section $8$ we shall assume that $\mathfrak g$ is simple but not of type $G_2$.

\subsection{}\label{7.2}

Let $x_{\alpha^\vee}:\alpha \in \Delta$ be the root vectors in a Chevalley basis for $\mathfrak g^\vee$.

Suppose that $\alpha^\vee, \beta^\vee, \alpha^\vee+\beta^\vee$ are non-zero coroots. Then in standard notation we may write
$$[x_{\alpha^\vee},x_{\beta^\vee}]=
N_{\alpha^\vee,\beta^\vee}x_{\alpha^\vee+\beta^\vee},$$
where the coefficients $N_{\alpha^\vee,\beta^\vee}$ are integers (just $\pm 1$ in the simply-laced case). We shall use the convention that $N_{\alpha^\vee,\beta^\vee}=0$ if any one of the three elements $\alpha^\vee, \beta^\vee, \alpha^\vee+\beta^\vee$ is not a coroot.

Take $\alpha, \beta, \delta \in \pi, \gamma \in \Delta^+$, in (i)-(v) below.

Given $\alpha(\gamma^\vee)<0$, then $\gamma^\vee-\alpha^\vee$ is not a coroot, since we have excluded $\mathfrak g$ of type $G_2$.

Via the Jacobi identity we obtain

\

(i) If $\alpha(\gamma^\vee)<0$, then $N_{\alpha^\vee,\gamma^\vee}N_{-\alpha^\vee, \alpha^\vee+\gamma^\vee}=-\alpha(\gamma^\vee)$.

\

Similarly

 \

(ii) If $\alpha(\gamma^\vee)>0$, then $N_{-\alpha^\vee,\gamma^\vee}N_{\alpha^\vee, -\alpha^\vee+\gamma^\vee}=\alpha(\gamma^\vee)$.

\

(iii) If $\alpha(\gamma^\vee)=0$, then both $N_{\alpha^\vee,\gamma^\vee}$ and $N_{-\alpha^\vee,\gamma^\vee}$ are zero unless both $\alpha^\vee$ and $\gamma^\vee$ are short coroots.

\

Again

\

(iv) Suppose $\alpha,\beta$ are distinct. Then $N_{\alpha^\vee,\gamma^\vee}N_{-\beta^\vee, \alpha^\vee+\gamma^\vee}=N_{-\beta^\vee,\gamma^\vee}N_{\alpha^\vee, \gamma^\vee-\beta^\vee}$.

\

(v)  Suppose $\alpha,\beta$ are distinct and that $\alpha^\vee+\gamma^\vee$ is a coroot, but $\alpha^\vee+\beta^\vee+\gamma^\vee$ is not a coroot.  Then $N_{-\delta^\vee,\gamma^\vee}N_{\alpha^\vee, \gamma^\vee-\delta^\vee}N_{\beta^\vee,\alpha^\vee +\gamma^\vee-\delta^\vee}=0$, unless $\beta=\delta$ in which case it equals $N_{\alpha^\vee,\gamma^\vee}\beta(\alpha^\vee+\gamma^\vee).$

\subsection{}\label{7.3}

Given $\gamma \in \Delta$, then $\gamma$ is a long (resp. short) root exactly when $\gamma^\vee$ is a short (resp. long) coroot.

In $\mathfrak g$ is simply-laced we declare all roots to be short and all coroots to be long.

Take $\alpha,\gamma \in \Delta$ such that $\alpha(\gamma^\vee) \neq 0$.  Then $|(\alpha(\gamma^\vee)|=1$ fails exactly when $\alpha,\gamma^\vee$ are both long.

More specifically we take $\alpha \in \pi,\gamma^\vee \in \Delta^{\vee+}$ in the above and we call this pair ``good" if $\alpha(\gamma^\vee)=-1$ and ``bad" if $\alpha(\gamma^\vee)=-2$ \textit{and} if some additional condition is satisfied (\ref {7.4}.

Our analysis is straightforward in the good case, with some minor complications in the bad case.

\subsection{}\label{7.4}

If $\alpha:=\alpha_i \in \pi$ we set $A_{\alpha^\vee}=A_i,P_{\alpha^\vee}=P_i$.  If $\alpha \in \pi,\gamma^\vee \in \Delta^{\vee+}$ is a good pair and $\alpha(\gamma^\vee)=-1$, we define
$$A_{\alpha^\vee}P_{\gamma^\vee}:=N_{\alpha^\vee,\gamma^\vee}
P_{\alpha^\vee+\gamma^\vee}. \eqno {(18)}$$

 Suppose that $\alpha \in \pi,\gamma^\vee \in \Delta^{\vee+}$ is a bad pair.  Then $2\alpha^\vee+\gamma^\vee$ is a coroot and we must define $P_{2\alpha^\vee+\gamma^\vee}$.  It can happen that it is defined by the previous rule (by taking a different path in the Zhelobenko graph - as per Figures).  The new rule which we propose below is only applied in two circumstances and in particular when no alternative paths exist.

 \

 \textbf{The starting bad pair}.

 \

 This is when both $\alpha$ and $\gamma$ are simple roots. In this case we set

$$A_{\gamma^\vee}A_{\alpha^\vee} P_{\gamma^\vee}=\frac{\alpha(\gamma^\vee)N_{\gamma^\vee,
\alpha^\vee}}{N_{-\alpha^\vee,\gamma^\vee+2\alpha^\vee}} P_{2\alpha^\vee+ \gamma^\vee}. \eqno {(19)}$$

Notice that here $\alpha^\vee$ is a unique simple coroot $\delta^\vee$ such that $2\alpha^\vee+ \gamma^\vee-\delta^\vee$ is a coroot.

We remark that there is a starting bad pair for every non-simply-laced $\mathfrak g$ (outside $G_2$).

\

\textbf{The intermediate bad pairs}.

\

This is when $\alpha^\vee$ is a unique simple coroot $\delta^\vee$ such that $2\alpha^\vee+ \gamma^\vee-\delta^\vee$ is a coroot, but $\gamma$ is not simple.
In this case there is a neighbour $\beta\neq \gamma$ of $\alpha$ in the Dynkin diagram. Then $\beta(\alpha^\vee)=-1$, since $\alpha$ is a long root.  Then $\beta(2\alpha^\vee+\gamma^\vee)\geq 0$, since $\beta^\vee\neq \alpha^\vee$.  Since we have excluded type $G_2$, this forces $\beta(\gamma^\vee)=2$.  Yet $\beta\neq \gamma$, so $\beta$ is also a long root (which is important and rather curious).

We conclude that there are no intermediate bad pairs in type $C$.  The intermediate bad pairs in type $B_n$ are given by the pairs $(\varepsilon_{i-1}-\varepsilon_i,2\varepsilon_i:i=2,3,\ldots,n-1$ with $\beta=\varepsilon_i-\varepsilon_{i+1}$.

Suppose that $\alpha \in \pi,\gamma^\vee \in \Delta^{\vee+}$ is an intermediate bad pair in type $F_4$.  Then by the above $\alpha$ is the unique long simple root with just one root (necessarily long) as a neighbour and this neighbour is $\beta$.  So as to use \cite [Planche VIII]{B} for the \textit{coroots} we shall label the simple roots in the reverse sense compared to Bourbaki.  Thus $\alpha=\alpha_4,\beta =\alpha_3$.  One checks that there are just two possible choices of $\gamma^\vee$, namely $\alpha^\vee_2+2\alpha^\vee_3,\alpha^\vee_1+2\alpha^\vee_2+
2\alpha^\vee_3+2\alpha^\vee_4$.  (We do not need to know the intermediate bad pairs explicitly or even their number, though all this would be a bit futile if there were not any!)

Finally if $\alpha \in \pi,\gamma^\vee \in \Delta^{\vee+}$ is an intermediate bad pair we set
$$A_{\beta^\vee}A_{\alpha^\vee} P_{\gamma^\vee}=\frac{\beta
(\alpha^\vee+\gamma^\vee)N_{\alpha^\vee,\gamma^\vee}}{N_{-\alpha^\vee,\alpha^\vee+\gamma^\vee}} P_{2\alpha^\vee+ \gamma^\vee}. \eqno {(20)}$$

\subsection{}\label{7.5}

We need the preliminary

\begin {lemma} Take $\alpha \in \pi, \gamma \in \Delta^+$.

\

(i)  Suppose $\alpha,\gamma^\vee$ is a good pair.  Then $A_{\alpha^\vee}\mathscr P(\gamma^\vee) = \mathscr P(\alpha^\vee+\gamma^\vee)$.

\

(ii)  Suppose $\alpha,\gamma^\vee$ is a bad pair and take either $\beta=\gamma$ (for the starting bad pair) or $\beta$ as in $(19)$ for an intermediate bad pair). Then $A_{\beta^\vee}A_{\alpha^\vee}\mathscr P(\gamma^\vee) = \mathscr P(2\alpha^\vee+\gamma^\vee)$.

\end {lemma}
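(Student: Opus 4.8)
The plan is to deduce this lemma from the combinatorial results already established about the Zhelobenko monoid (Lemmas \ref{6.4}, \ref{6.6.1}, \ref{6.7}, \ref{6.8}) together with the defining formulae $(18)$--$(20)$, by checking that the polynomials $P_{\gamma^\vee}$ built via those formulae realize the monoid elements $\mathscr P(\gamma^\vee)$ up to sign, and that the sign ambiguity is pinned down consistently by the structure constants $N_{\alpha^\vee,\beta^\vee}$ of the Chevalley basis for $\mathfrak g^\vee$. In other words, I would argue that $(18)$ is exactly the rule $A_{\alpha^\vee}\mathscr P(\gamma^\vee)=\mathscr P(s_\alpha\gamma^\vee)$ of Lemma \ref{6.7}, now with a specific choice of sign, so part (i) is essentially a restatement once one observes that $s_\alpha\gamma^\vee=\alpha^\vee+\gamma^\vee$ when $\alpha(\gamma^\vee)=-1$; the content is that the sign prescribed by $N_{\alpha^\vee,\gamma^\vee}$ is the correct one, i.e. compatible with the relations $(15)$ of the Zhelobenko monoid.

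First I would treat (i). By Lemma \ref{6.7} (the amalgam of \ref{6.4}(ii), \ref{6.6.1} and the $F_4$ computation) we already know that in the good case $A_{\alpha^\vee}\mathscr P(\gamma^\vee)$ lies in $\im\mathscr P$ and equals $\mathscr P(s_\alpha\gamma^\vee)=\mathscr P(\alpha^\vee+\gamma^\vee)$ up to sign. Since we are free to fix the signs of the $P_{\gamma^\vee}$ by decreeing $(18)$ along a chosen path (Lemma \ref{6.9} guarantees each positive coroot is reached by such a path of strictly decreasing height), the only thing to verify is consistency: if $\alpha^\vee+\gamma^\vee$ can also be written as $\beta^\vee+\delta^\vee$ with $(\beta,\delta^\vee)$ good, then $N_{\alpha^\vee,\gamma^\vee}P_{\alpha^\vee+\gamma^\vee}$ and $N_{\beta^\vee,\delta^\vee}P_{\beta^\vee+\delta^\vee}$ must agree. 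This is where the Jacobi-identity relations of \ref{7.2} enter: relation (iv) there, $N_{\alpha^\vee,\gamma^\vee}N_{-\beta^\vee,\alpha^\vee+\gamma^\vee}=N_{-\beta^\vee,\gamma^\vee}N_{\alpha^\vee,\gamma^\vee-\beta^\vee}$, is precisely the cocycle condition that makes the two paths give the same signed polynomial, matching the braid-type relations $A_iP_j=-A_jP_i$ etc.\ in $(15)$. So (i) reduces to checking that $(18)$ applied along two paths meeting at the same coroot gives the same answer, and this follows from \ref{7.2}(i)--(iv).

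For (ii) the same philosophy applies, but now the target $2\alpha^\vee+\gamma^\vee$ is reached by a length-two step $A_{\beta^\vee}A_{\alpha^\vee}$ rather than a single BGG operator, and the relevant combinatorial fact is the ``bad pair'' case, which is not covered by Lemma \ref{6.7} (as the Remark after \ref{6.8} warns, $A_\alpha\mathscr P(\gamma^\vee)$ need not lie in $\im\mathscr P$ when $\alpha(\gamma^\vee)=-2$). Here I would invoke the $m_{i,j}=2$ relation in $(14)$, $A_jA_iP_j=\tfrac{2(P_i-P_j)}{(1+\alpha_i^\vee+\alpha_j^\vee)(1+\alpha_i^\vee+2\alpha_j^\vee)}$, to see that $A_{\beta^\vee}A_{\alpha^\vee}\mathscr P(\gamma^\vee)$ does lie in $\im\mathscr P$ and equals $\mathscr P(2\alpha^\vee+\gamma^\vee)$ up to sign (for the starting bad pair, where $\gamma$ is simple, directly; for an intermediate bad pair, by first routing through the neighbour $\beta$ with $\beta(\gamma^\vee)=2$ identified in \ref{7.4}). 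The normalizing constants in $(19)$ and $(20)$ — the ratios $\alpha(\gamma^\vee)N_{\gamma^\vee,\alpha^\vee}/N_{-\alpha^\vee,\gamma^\vee+2\alpha^\vee}$ and $\beta(\alpha^\vee+\gamma^\vee)N_{\alpha^\vee,\gamma^\vee}/N_{-\alpha^\vee,\alpha^\vee+\gamma^\vee}$ — are chosen exactly so that, after feeding in \ref{7.2}(i),(ii),(v), the signed polynomial produced agrees with what one gets from an alternative good-pair path whenever such a path exists; relation \ref{7.2}(v) is the one tailored to the bad-pair situation where $\alpha^\vee+\beta^\vee+\gamma^\vee$ fails to be a coroot.

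The main obstacle I anticipate is exactly the sign-consistency bookkeeping of (ii): one must check that the two different normalizations $(19)$ (starting) and $(20)$ (intermediate) are mutually compatible, and compatible with $(18)$, along every pair of paths reaching $2\alpha^\vee+\gamma^\vee$ in the Zhelobenko graph — and for this the case analysis of \ref{7.4} (types $B_n$, $C_n$ with its absence of intermediate bad pairs, and the two explicit intermediate bad pairs in $F_4$) together with Figures $3$, $3^*$ must be used. I expect the proof to proceed by first disposing of (i) uniformly via \ref{7.2}(iv), then handling the starting bad pair via \ref{7.2}(i),(ii) and the $m_{i,j}=2$ line of $(14)$, and finally the intermediate bad pairs via \ref{7.2}(v), invoking the type-by-type lists of \ref{7.4} only to confirm that no further exotic configurations arise; in effect this lemma is the bridge that transfers the purely combinatorial monoid of Section $6$ into a polynomial realization compatible with the Chevalley-basis signs, so that the recurrences $(24)$ of Section $8$ can be derived.
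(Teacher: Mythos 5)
You have misread what the lemma asserts, and this misreading produces a genuine gap. The map $\mathscr P$ takes values in the Zhelobenko monoid $\textbf{P}$, where (per \ref{6.2}) elements are identified up to sign; thus $\mathscr P(\gamma^\vee)$ is a sign-quotiented monoid element, \emph{not} the signed polynomial $P_{\gamma^\vee}$ defined via $(18)$--$(20)$. Indeed Lemma~\ref{7.7}(ii) later records that $P_{\gamma^\vee}$ is merely ``a multiple of $\mathscr P(\gamma^\vee)$ as elements of $k\textbf{P}$,'' and the remark after it explicitly defers the path-independence of that scalar to Proposition~\ref{7.8}. Your entire programme --- using \ref{7.2}(iv) and \ref{7.2}(v) to establish a cocycle/sign-consistency for the normalizations $(18)$, $(19)$, $(20)$ across different paths --- is the content of Proposition~\ref{7.8}, not of this lemma, and building the present lemma on it inverts the logical order of the section.

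The actual content of Lemma~\ref{7.5} is two-fold. First, for (i), Lemma~\ref{6.7} gives the identity $A_{\alpha^\vee}\mathscr P(\gamma^\vee)=\mathscr P(s_\alpha\gamma^\vee)$ only \emph{conditionally}, namely when the left-hand side is already known to lie in $\im\mathscr P$; you cite Lemma~\ref{6.7} as if it delivered that containment, but establishing that containment for good pairs is precisely what needs proof. Second, for (ii), the bad pairs are \emph{exactly} the situations where a single BGG operator pushes $\mathscr P(\gamma^\vee)$ out of $\im\mathscr P$, so neither Lemma~\ref{6.7} nor the monoid relation in $(14)$ for $m_{i,j}=2$ (which only concerns the generating pair $P_i, P_j$ with $i,j$ simple) applies directly. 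The paper's proof is therefore a type-by-type inspection: in $B_n$ the only elements pushed out are the $[i,n,i]=\mathscr P(2\varepsilon_i)$, under $A_{i-1}$, which match exactly the bad pairs, and one computes $A_iA_{i-1}[i,n,i]=[i-1,n,i-1]=\mathscr P(2\alpha^\vee+\gamma^\vee)$ from $(16)$; in $C_n$ only $(n-1)$ is pushed out, under $A_n$, matching the single starting bad pair, and $A_{n-1}A_n((n-1))=[n-1,n,n-1]$; in $F_4$ one reads the corresponding facts off Figures~$3$ and~$3^*$ (and observes that not every pushed-out case is a bad pair in the required sense). Since that enumeration shows good pairs never leave $\im\mathscr P$, (i) then drops out of the conditional in Lemma~\ref{6.7}. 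None of this case-by-case verification appears in your proposal, and no amount of Jacobi-identity bookkeeping replaces it, because the statement being proved lives upstream of the sign-normalized polynomials.
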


\begin {proof}   If $\mathfrak g$ is simply-laced, the assertions follow from Lemma \ref {6.4}(i).

Consider (ii) in the non-simply-laced cases.

In type $B_n$, the only elements of the image of $\mathscr P$ pushed out of the image of $\mathscr P$ by applying some BGG operator are the $[i,n,i]$ which are the images of the long coroots $2\varepsilon_i: i=1,2,\ldots,n$ and indeed exactly by $A_{i-1}$, where in this : $i=2,3,\ldots,n$.  This corresponds exactly to the bad pair $\alpha=\alpha_{i-1}, \gamma^\vee=2\epsilon_i$.  Moreover in this $A_{\beta^\vee}=A_i$ and $A_iA_{i-1}[i,n,i]=[i-1,n,i-1]=\mathscr P(2\epsilon_{i-1})=\mathscr P(2\alpha^\vee+\gamma^\vee)$, as required.

In type $C_n$, there is just one element of the image of $\mathscr P$ pushed out of the image of $\mathscr P$ by applying some BGG operator.  It is $(n-1)$ with the BGG operator being $A_n$. This corresponds exactly to the starting bad pair $\alpha=\alpha_n, \gamma^\vee=\epsilon_{n-1}-\epsilon_n$.  Yet $A_{n-1}A_n((n-1))=[n-1, n,n-1]=\mathscr P(\epsilon_{n-1}+\epsilon_n)=\mathscr P(2\alpha^\vee+\gamma^\vee)$, as required.

In type $F_4$ one checks from Figures $3$ and $3^*$, that $A_{\alpha^\vee}\mathscr P(\gamma^\vee) \notin \im \mathscr P$ only when $\alpha$ and $\gamma^\vee$ are both long.  However unlike the classical case, the pair $\alpha,\gamma^\vee$ is not always a bad pair in our sense.  In the three cases that is a bad pair one checks that (ii) and (iii) hold again by inspection of Figures $3$ and $3^*$.

This proves (ii).  Moreover we have also shown that if $\alpha,\gamma^\vee$ is a good pair, then $A_{\alpha^\vee}\mathscr P(\gamma^\vee) \in \im \mathscr P$.  Consequently (i) follows from Lemma \ref {6.8}.

\end {proof}

\subsection{}\label{7.6}

\begin {lemma} Take $\alpha,\beta \in \pi$ and suppose that $\gamma^\vee-2\alpha^\vee,\gamma^\vee-2\beta^\vee,\gamma^\vee$ are all positive coroots.  The $\alpha=\beta$.

\end {lemma}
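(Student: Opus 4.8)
The statement is a purely combinatorial fact about root systems: if $\gamma^\vee - 2\alpha^\vee$, $\gamma^\vee - 2\beta^\vee$ and $\gamma^\vee$ are all positive coroots with $\alpha,\beta$ simple, then $\alpha=\beta$. The plan is to work in the dual root system $\Delta^\vee$, where $\alpha^\vee,\beta^\vee$ are simple roots and $\gamma^\vee$ is a positive root, and to exploit the fact that the coefficient $2$ appearing twice forces a very tight geometric configuration. First I would record the standard fact that if $\delta$ is a root and $\alpha$ a simple root with $\delta - 2\alpha$ also a root, then $\alpha$ must be a \emph{long} simple root, $\delta$ involves $\alpha$ with coefficient $\geq 2$ in its expansion in the simple (co)roots, and the $\alpha$-string through $\delta-2\alpha$ has length at least $3$; in particular this situation never occurs in the simply-laced case and only in types $B_n, C_n, F_4$ (and $G_2$, which is excluded throughout this section). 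So both $\alpha^\vee$ and $\beta^\vee$ are long simple roots of $\Delta^\vee$, and $\gamma^\vee$ has coefficient $\geq 2$ on each of $\alpha^\vee$ and $\beta^\vee$.

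Next I would argue that there is at most one long simple (co)root which can occur with coefficient $\geq 2$ in a single positive (co)root in the required way, by going case by case. In type $C_n^\vee = B_n$ there is a unique long simple root, so $\alpha=\beta$ immediately; dually, in type $B_n^\vee = C_n$ there is also a unique long simple coroot (using the Bourbaki labelling of $C_n$, it is $\alpha_n$), so again $\alpha=\beta$. The only remaining case is $F_4$, where there are two long simple coroots. Here I would observe that the highest coroot has simple-coroot coefficients $(2,3,4,2)$ (in the reverse labelling of \cite[Planche VIII]{B} used in \ref{7.4}, \ref{7.5}), and more importantly inspect which positive coroots of $F_4$ have \emph{both} long-simple-coroot coefficients equal to a value from which one can subtract $2$ and still land in $\Delta^{\vee+}$; a short check of the $F_4$ coroot table shows no positive coroot simultaneously admits subtraction of $2\alpha^\vee$ and $2\beta^\vee$ for the two distinct long simple coroots $\alpha^\vee\neq\beta^\vee$. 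That rules out $\alpha\neq\beta$ in $F_4$ as well, completing the proof.

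The main obstacle will be making the $F_4$ argument clean rather than a brute table lookup. The cleanest route I see is: the two long simple coroots of $F_4$ (which I will write as $\alpha_4^\vee$ and $\alpha_3^\vee$ in the reversed labelling, the latter being the long root with a long neighbour, as in \ref{7.4}) are orthogonal to each other, and if $\gamma^\vee$ had $\alpha_3^\vee$-coefficient and $\alpha_4^\vee$-coefficient both $\geq 2$, then $\gamma^\vee - 2\alpha_3^\vee - 2\alpha_4^\vee$ would still need to be a weight with nonnegative coefficients on \emph{all} simple coroots, which together with the bound $\gamma^\vee \leq \beta_0^\vee$ (the highest coroot, with coefficients $(2,3,4,2)$ in the relevant ordering) and a parity/height computation forces a contradiction: subtracting $2\alpha_3^\vee+2\alpha_4^\vee$ from any positive coroot $\leq \beta_0^\vee$ drops the $\alpha_3^\vee$ or $\alpha_4^\vee$ coefficient below what is attained by any coroot of the relevant height. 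I would present this as: reduce to $F_4$, note both simple coroots are long and hence (by the first paragraph) each coefficient of $\gamma^\vee$ on them is $\geq 2$; then invoke the explicit list of $F_4$ positive coroots to see the only ones with an $\alpha_i^\vee$-coefficient $\geq 2$ on a long simple coroot already fail the other condition. This keeps the case analysis to the one genuinely exceptional type and isolates the verification to a bounded finite check, which is exactly the style used in \ref{6.7} and \ref{7.4}--\ref{7.5}.
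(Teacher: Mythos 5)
Your proposal has a genuine gap, and it starts right at your ``standard fact.'' If $\delta$ and $\delta-2\alpha$ are both roots (with $\alpha$ simple, $\delta\neq\pm\alpha$), then the $\alpha$-string through $\delta-2\alpha$ has length at least $3$; outside $G_2$ the string length is at most $3$, so it is exactly $\{\delta-2\alpha,\delta-\alpha,\delta\}$, and the ends of a length-$3$ string are long while the middle is short. For such a string to exist the adding root $\alpha$ must be \emph{short}, not long. (Check in $B_2$: the $\alpha_2=\varepsilon_2$-string through $\varepsilon_1-\varepsilon_2$ is $\{\varepsilon_1-\varepsilon_2,\varepsilon_1,\varepsilon_1+\varepsilon_2\}$ of length $3$, while every $\alpha_1$-string has length $\leq 2$.) Applied in $\Delta^\vee$, this means $\alpha^\vee$ is a \emph{short} simple root of $\Delta^\vee$ (equivalently $\alpha$ a long root of $\Delta$), the opposite of what you assert. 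This error then sinks the case analysis: whichever of ``long'' or ``short'' you take, exactly one of $B_n$ or $C_n$ has $n-1$ candidate simple roots of the relevant length (the $n-1$ short simple coroots of $C_n=\Delta^\vee$ for $\mathfrak g=B_n$, or the $n-1$ long simple coroots of $B_n=\Delta^\vee$ for $\mathfrak g=C_n$), so ``there is a unique such simple (co)root'' fails in that type and the lemma is not proved there. Nothing in the proposal recovers from this --- you would still need to argue, for a fixed $\gamma^\vee$, that only one of those $n-1$ simple coroots can actually be subtracted twice.

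The paper's own proof is a short, case-free and entirely elementary argument you should compare against. From the string of length $3$ one gets $\alpha(\gamma^\vee)=2$ and $\beta(\gamma^\vee)=2$, with $\gamma^\vee-\alpha^\vee$ a short coroot. If $\alpha\neq\beta$ then $\beta(\alpha^\vee)\leq 0$, so $\beta(\gamma^\vee-2\alpha^\vee)=2-2\beta(\alpha^\vee)\geq 2$; since $\gamma^\vee-2\alpha^\vee$ is a coroot distinct from $\beta^\vee$ and we are outside $G_2$, this forces $\beta(\alpha^\vee)=0$. Then $\beta(\gamma^\vee-\alpha^\vee)=2$, which forces $\gamma^\vee-\alpha^\vee$ to be a long coroot --- contradicting that it is the middle (short) term of the $\alpha^\vee$-string. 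This avoids all case checking (including the $F_4$ table lookup you were worried about) and should replace the case analysis in your plan.
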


\begin {proof} Under the hypothesis these must all be long coroots, with $\gamma^\vee-\alpha^\vee$ and $\gamma^\vee-\beta^\vee$ short coroots. Again $\alpha(\gamma^\vee)=2$ and
$\beta(\gamma^\vee)=2$.  If $\beta\neq \alpha$, then $\beta(\gamma^\vee-2\alpha^\vee)=2-2\beta(\alpha^\vee)\geq 2$, which forces $\beta(\alpha^\vee)=0$. Then $\beta(\gamma^\vee-\alpha^\vee)=2$, implying that $\gamma^\vee-\alpha^\vee$ is a long coroot.  This contradiction proves the lemma.
\end {proof}

\subsection{}\label{7.7}

Combining Lemmas \ref {6.9} and \ref {7.6} we deduce that every positive coroot $\gamma^\vee$ can be written in the form $s_{j_1}s_{j_2}\ldots s_{j_s}\alpha^\vee_{j_{s+1}}$, with $\alpha_{j_t}, s_{j_{t+1}}\ldots s_{j_s}\alpha^\vee_{j_{s+1}}$ a good or a bad pair for $t=1,2,\ldots,s$.

This and Lemma \ref {7.5} have the following consequence

\begin {lemma}

\

\smallskip

(i) \ In the $P_{\gamma^\vee}$ constructed through $(18),(19),(20)$ all the positive coroots appear.

\

(ii)\  For all $\gamma^\vee \in \Delta^{\vee +}$, $P_{\gamma^\vee}$ is a multiple of $\mathscr P(\gamma^\vee)$ as elements of $k\textbf{P}$.

\end {lemma}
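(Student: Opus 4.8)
The plan is to prove both parts by induction on $o(\gamma^\vee)$, using the factorization established in \ref{7.7}: every positive coroot $\gamma^\vee$ can be written as $s_{j_1}s_{j_2}\ldots s_{j_s}\alpha^\vee_{j_{s+1}}$ where each consecutive pair $(\alpha_{j_t},\ s_{j_{t+1}}\ldots s_{j_s}\alpha^\vee_{j_{s+1}})$ is either a good pair or a bad pair. I would run the induction in tandem: the inductive hypothesis at height $r$ asserts that every $P_{\gamma^\vee}$ with $o(\gamma^\vee)\le r$ has been constructed through $(18)$, $(19)$, $(20)$ and that it equals a nonzero scalar multiple of $\mathscr P(\gamma^\vee)$ in $k\textbf{P}$.

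For the base case $o(\gamma^\vee)=1$ the coroot is simple, $P_{\alpha_i^\vee}=P_i$ and $\mathscr P(\alpha_i^\vee)=P_i$ by definition, so both assertions are immediate. For the inductive step, take $\gamma^\vee$ with $o(\gamma^\vee)=r+1$ and write it via the factorization of \ref{7.7}. If the outermost pair $(\alpha,\gamma'^\vee)$ with $\gamma'^\vee := s_{j_2}\ldots s_{j_s}\alpha^\vee_{j_{s+1}}$ and $\alpha=\alpha_{j_1}$ is a good pair, then $o(\gamma'^\vee)=r$, so by induction $P_{\gamma'^\vee}$ is already constructed and equals a nonzero multiple of $\mathscr P(\gamma'^\vee)$; rule $(18)$ then defines $P_{\gamma^\vee}=N_{\alpha^\vee,\gamma'^\vee}P_{\alpha^\vee+\gamma'^\vee}$ where $\alpha^\vee+\gamma'^\vee=\gamma^\vee$, and since $N_{\alpha^\vee,\gamma'^\vee}\neq 0$ (because $\alpha(\gamma'^\vee)<0$ forces $\gamma'^\vee-\alpha^\vee$ not a coroot and then \ref{7.2}(i) gives $N_{\alpha^\vee,\gamma^\vee}N_{-\alpha^\vee,\alpha^\vee+\gamma^\vee}=-\alpha(\gamma^\vee)>0$), and by Lemma \ref{7.5}(i) $A_{\alpha^\vee}\mathscr P(\gamma'^\vee)=\mathscr P(\gamma^\vee)$, applying $A_{\alpha^\vee}$ to the inductive relation $P_{\gamma'^\vee}=c\,\mathscr P(\gamma'^\vee)$ shows $P_{\gamma^\vee}$ is a nonzero multiple of $\mathscr P(\gamma^\vee)$. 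The bad-pair case is handled symmetrically: here $o(\gamma^\vee)=o(\gamma'^\vee)+2$, the relevant intermediate coroot $\alpha^\vee+\gamma'^\vee$ has height $r$, and rules $(19)$, $(20)$ express $P_{\gamma^\vee}$ via a two-step application $A_{\beta^\vee}A_{\alpha^\vee}$, matched by Lemma \ref{7.5}(ii) which gives $A_{\beta^\vee}A_{\alpha^\vee}\mathscr P(\gamma'^\vee)=\mathscr P(2\alpha^\vee+\gamma'^\vee)=\mathscr P(\gamma^\vee)$; one checks the scalar in $(19)$, $(20)$ is nonzero using parts (i)--(ii) of \ref{7.2} together with the fact that $\alpha(\gamma'^\vee)\neq 0$ and $\beta(\alpha^\vee+\gamma'^\vee)\neq 0$ in those configurations. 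This proves (i) (all positive coroots appear) and (ii) (each $P_{\gamma^\vee}$ is a multiple of $\mathscr P(\gamma^\vee)$) simultaneously.

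The main obstacle is \emph{well-definedness}: a priori $\gamma^\vee$ admits several factorizations of the type in \ref{7.7}, corresponding to distinct paths in the Zhelobenko graph, and one must check that the resulting $P_{\gamma^\vee}$ does not depend on the chosen path. This is exactly the sign/consistency issue flagged in \ref{6.2} — the analogue of choosing signs in a Chevalley basis. The key point, which I would invoke, is that the relations among the $A_i$ acting on the $P_j$ are precisely the relations $(15)$ of the Zhelobenko monoid, and these are themselves a \emph{consequence} of the structure constant identities \ref{7.2}(iv),(v) (Jacobi identities for $\mathfrak g^\vee$); so the $N$-coefficient prefactors in $(18)$--$(20)$ conspire exactly so that two paths differing by a braid-type move in $\textbf{P}$ produce the same element. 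Concretely, whenever two factorizations first diverge, the two ways of descending are related by one of the relations in $(15)$ applied within $\textbf{P}$, and the corresponding product of $N$'s on each side agrees by \ref{7.2}(iv) (for the good--good commuting/braiding case) or \ref{7.2}(v) (for the situations where a bad pair is involved and a length-two detour replaces a length-two detour). Since part (ii) already forces $P_{\gamma^\vee}$ to be \emph{some} multiple of the fixed monoid element $\mathscr P(\gamma^\vee)$, the only ambiguity is a scalar, and the $N$-identities pin that scalar down uniquely; hence the construction is consistent. With well-definedness in hand, the inductive argument above goes through and both assertions follow.
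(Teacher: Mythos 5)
Your inductive argument for the lemma proper follows exactly the same route as the paper: factor $\gamma^\vee$ as $s_{j_1}\cdots s_{j_s}\alpha^\vee_{j_{s+1}}$ via Lemmas \ref{6.9} and \ref{7.6} so that each consecutive pair is good or bad, then induct on $o(\gamma^\vee)$ using the rules $(18)$--$(20)$ to construct $P_{\gamma^\vee}$ and Lemma \ref{7.5} to match the result against $\mathscr P(\gamma^\vee)$. That part is sound (the non-vanishing of the $N$-scalars is correctly extracted from \ref{7.2}), and it is a legitimate fleshing-out of what the paper compresses into one line.

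Where you depart from the paper is in trying to establish, inside the proof of this lemma, that the scalar is path-independent. The paper deliberately does \emph{not} do this here: the remark immediately following the lemma says explicitly that ``it is not immediate that different paths give the same scalar'' and defers the resolution to Proposition \ref{7.8}, whose recurrence $(1+\gamma^\vee)P_{\gamma^\vee}=\sum_\alpha N_{-\alpha^\vee,\gamma^\vee}P_{\gamma^\vee-\alpha^\vee}$ manifestly determines $P_{\gamma^\vee}$ independently of any chosen path. Your alternative is to argue that the $N$-coefficient prefactors in $(18)$--$(20)$ ``conspire exactly'' so that paths differing by a relation of $(15)$ give the same scalar, citing \ref{7.2}(iv),(v). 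As stated this is a sketch rather than a proof: \ref{7.2}(iv) handles only the two-step commuting exchange and \ref{7.2}(v) only a very specific three-step configuration tied to a bad pair, whereas the monoid relations $(15)$ include genuine braid relations of lengths $3$, $4$, and $6$, none of which you actually verify preserve the product of structure constants. So the extra claim is not established by the argument given. Fortunately Lemma \ref{7.7}(ii) as stated does not need well-definedness --- it asserts only that each $P_{\gamma^\vee}$ so produced is \emph{some} multiple of $\mathscr P(\gamma^\vee)$, which your induction proves for every choice of path --- so your proof of the lemma is correct; but the well-definedness you attempt to add is better left to the paper's mechanism in \ref{7.8}, which sidesteps the case-by-case braid check entirely.
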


\textbf{Remark}.  There can be several different paths to obtain $P_\gamma$. It is not immediate that different paths give the same scalar.  However this will be shown in \ref {7.8} below.

\subsection{}\label{7.8}

\begin {prop}

\

\smallskip

(i) For all $\gamma \in \Delta^+ \setminus \pi$, one has
$$P_{\gamma^\vee} =\frac {\sum_{\alpha \in \pi}N_{-\alpha^\vee,\gamma^\vee} P_{\gamma^\vee-\alpha^\vee}}{1+\gamma^\vee}.$$

\

(ii) For all $\alpha \in \pi, \gamma \in \Delta^+$, with $\alpha(\gamma^\vee) > 0$, one has $A_{\alpha^\vee} P_{\gamma^\vee}=0$.

\

(iii)  For all $\alpha \in \pi, \gamma \in \Delta^+$, with $\alpha(\gamma^\vee) = 0$, one has $A_{\alpha^\vee} P_{\gamma^\vee}=0$, unless $\gamma^\vee +\alpha^\vee$ is a coroot.

\end {prop}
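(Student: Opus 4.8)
\textbf{Proof proposal for Proposition \ref{7.8}.}

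The plan is to prove the three parts more or less simultaneously by induction on $o(\gamma^\vee)$, using the defining rules $(18)$, $(19)$, $(20)$ together with the Chevalley-basis structure constant identities collected in \ref{7.2}. The base case is $\gamma\in\pi$, where $P_{\gamma^\vee}=P_i$ and the three assertions are exactly lines one and two of the relations $(15)$ together with the corollary of \ref{2.5} (in particular (i) is vacuous, (ii) and (iii) are the vanishing relations $A_iP_j=0$ in the appropriate cases). For the inductive step I would fix $\gamma^\vee$ with $o(\gamma^\vee)\geq 2$ and, using Lemma \ref{7.7} (i.e. the combination of Lemmas \ref{6.9} and \ref{7.6}), write $\gamma^\vee=s_\alpha\delta^\vee$ with $o(\delta^\vee)<o(\gamma^\vee)$ and $(\alpha,\delta^\vee)$ a good pair, or else $\gamma^\vee=s_\beta s_\alpha\delta^\vee$ arising from a bad pair; in the good case $P_{\gamma^\vee}=N_{\alpha^\vee,\delta^\vee}^{-1}A_{\alpha^\vee}P_{\delta^\vee}$ by $(18)$, with $\delta^\vee=\gamma^\vee-\alpha^\vee$.

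For part (i) I would start from such a presentation $P_{\gamma^\vee}=N_{\alpha^\vee,\gamma^\vee-\alpha^\vee}^{-1}A_{\alpha^\vee}P_{\gamma^\vee-\alpha^\vee}$, expand $A_{\alpha^\vee}P_{\gamma^\vee-\alpha^\vee}=(P_{\gamma^\vee-\alpha^\vee}-s_\alpha P_{\gamma^\vee-\alpha^\vee})/h_\alpha$, and then apply the inductive hypothesis (all three parts) to $P_{\gamma^\vee-\alpha^\vee}$ to rewrite $s_\alpha P_{\gamma^\vee-\alpha^\vee}$ and hence $A_{\alpha^\vee}P_{\gamma^\vee-\alpha^\vee}$ in terms of the lower $P_{\gamma^\vee-\alpha^\vee-\beta^\vee}$. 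Matching this against the claimed formula $P_{\gamma^\vee}=(1+\gamma^\vee)^{-1}\sum_\alpha N_{-\alpha^\vee,\gamma^\vee}P_{\gamma^\vee-\alpha^\vee}$ is then a bookkeeping exercise: the denominator $1+\gamma^\vee$ appears because $s_\alpha(h_{\gamma^\vee-\alpha^\vee})$-type shifts combine with $h_\alpha$ to give $1+\gamma^\vee$ on the nose (this is the coroot-level analogue of the computation $h_j+1-h_j(\alpha_i)h_i=1+s_i(h_j)$ used in Corollary \ref{2.5}), and the numerator coefficients collapse to $N_{-\alpha^\vee,\gamma^\vee}$ precisely because of the Jacobi-identity relations (i), (ii), (iv) of \ref{7.2}. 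Here the independence-of-path remark at the end of \ref{7.7} is subsumed: since the right-hand side of (i) manifestly does not depend on which $\alpha$ was used, any two paths give the same scalar, which also discharges the Remark after Lemma \ref{7.7}.

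Parts (ii) and (iii) I would then read off. If $\alpha(\gamma^\vee)>0$, respectively $\alpha(\gamma^\vee)=0$ with $\gamma^\vee+\alpha^\vee$ not a coroot, I would apply $A_{\alpha^\vee}$ to the formula in (i) and use $(13)$ together with the inductive hypothesis on the $P_{\gamma^\vee-\beta^\vee}$; the key point is that $A_{\alpha^\vee}$ annihilates the $s_\alpha$-invariant denominator $1+\gamma^\vee$ only after one checks $s_\alpha(1+\gamma^\vee)=1+\gamma^\vee$ fails — so instead I would use $A_{\alpha^\vee}(fg)$ with $f=(1+\gamma^\vee)^{-1}$, reducing to showing $A_{\alpha^\vee}$ of the numerator lies in the ideal generated by $s_\alpha(1+\gamma^\vee)-(1+\gamma^\vee)$; alternatively, and more cleanly, reprove (ii),(iii) directly from the good/bad-pair description of which $A_{\alpha^\vee}P_{\gamma^\vee}$ are forced to vanish, invoking Lemma \ref{6.8}(ii) and the classification of non-vanishing relations $(17)$ (plus Figures $3$, $3^*$ for $F_4$). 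I expect the main obstacle to be the bad-pair case: when $(\alpha,\delta^\vee)$ is bad one only controls the length-two composite $A_{\beta^\vee}A_{\alpha^\vee}P_{\delta^\vee}$ via $(19)$--$(20)$, not $A_{\alpha^\vee}P_{\delta^\vee}$ itself, so verifying that the formula in (i) still holds — with the correct structure constant $N_{-\alpha^\vee,\gamma^\vee}$ rather than some spurious multiple — requires carefully feeding the special identity \ref{7.2}(v) into the induction, and checking compatibility of the two bad-pair normalizations $(19)$ and $(20)$ against the good-pair rule $(18)$ along every alternative path. This is exactly the point where the list in \ref{7.2} and Lemma \ref{7.6} (uniqueness of the simple coroot $\delta^\vee$ with $2\alpha^\vee+\gamma^\vee-\delta^\vee$ a coroot) do the real work.
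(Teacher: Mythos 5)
Your overall plan is the paper's: induct on $o(\gamma^\vee)$, work with the cleared identity $(1+\gamma^\vee)P_{\gamma^\vee}=\sum_{\beta\in\pi}N_{-\beta^\vee,\gamma^\vee}P_{\gamma^\vee-\beta^\vee}$, apply $A_{\alpha^\vee}$ (or $A_{\beta^\vee}A_{\alpha^\vee}$ for bad pairs), and exploit the skew-Leibniz rule $(13)$, the defining rules $(18)$--$(20)$, and the structure-constant identities of \ref{7.2}. (The paper keeps $1+\gamma^\vee$ on the left rather than as a ``denominator'', so the $s_\alpha$-invariance issue you raise does not arise; the new denominator $1+(\gamma^\vee+\alpha^\vee)$ comes directly from $s_\alpha(1+\gamma^\vee)=1+s_\alpha\gamma^\vee=1+\gamma^\vee+\alpha^\vee$ in the good-pair case, just as you say.) You also correctly anticipate that $\ref{7.2}$(v) is what rescues the intermediate bad-pair case, and that path-independence of the scalars is an output of (i) rather than an input.

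The genuine gap is in the good-pair case (Case $1$), which you describe as ``a bookkeeping exercise \ldots\ precisely because of the Jacobi-identity relations (i), (ii), (iv) of \ref{7.2}.'' That is not enough. After applying $A_{\alpha^\vee}$ to the cleared sum, the terms with $\alpha(\gamma^\vee-\beta^\vee)>0$ die by (ii), and the good-pair terms collapse via $\ref{7.2}$(iv); but the terms with $\alpha(\gamma^\vee-\beta^\vee)=0$ are only \emph{partially} killed by (iii), which has the escape clause ``unless $\gamma^\vee-\beta^\vee+\alpha^\vee$ is a coroot.'' The paper disposes of that residual possibility by a separate root-length and coroot-length contradiction (the paragraph ``Suppose $\alpha(\gamma^\vee-\beta^\vee)=0$ and $A_{\alpha^\vee}P_{\gamma^\vee-\beta^\vee}\neq 0$\ldots'', with a parallel argument with $a=0$ and $b=0$ inside Case $3$). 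This is root geometry in the non-simply-laced case, not a Jacobi identity, and without it the inductive step for (i) does not close. A second, smaller, issue: your ``cleaner'' alternative for (ii) and (iii) via Lemma \ref{6.8}(ii) and the table $(17)$ only covers the case in which neither $\gamma^\vee\pm\alpha^\vee$ is a coroot; the paper actually proves (ii) by applying $A_{\alpha^\vee}$ to the cleared identity, using $(13)$, $\ref{7.2}$(i)/(ii), and solving for $A_{\alpha^\vee}P_{\gamma^\vee}$, and this route is not optional.
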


\begin {proof}  The assertions are proved successively via induction on $\rho(\gamma^\vee)=o(\gamma^\vee)$.

Consider (ii),(iii) and suppose $o(\gamma^\vee)=1$.  Then either $\alpha = \gamma$ or $\alpha(\gamma^\vee)=0$.  In either case our assertion follows from $(15)$.

Now assume that $r:=o(\gamma^\vee) >1$ and that (ii),(iii) has been proved for $o(\gamma^\vee)=r-1$ and (i) when $o(\gamma^\vee)=r$.

Suppose $o(\gamma^\vee)=r$ and consider (ii),(iii).  By (i) and the induction hypothesis
$$(1+\gamma^\vee) P_{\gamma^\vee}=\sum_{\beta \in \pi}N_{-\beta^\vee,\gamma^\vee} P_{\gamma^\vee-\beta^\vee}.\eqno {(21)}$$

Under the hypothesis of (ii), the inequality $\alpha(\gamma^\vee-\beta^\vee) > 0$ results, unless $\alpha=\beta$. Hence $A_{\alpha^\vee} P_{\gamma^\vee-\beta^\vee}=0$, by the induction hypothesis if $\alpha \neq \beta$. The applying $A_{\alpha^\vee}$ to both sides of $(21)$ we obtain

$$A_{\alpha^\vee}(1+\gamma^\vee)P_{\gamma^\vee}=N_{-\alpha^\vee,\gamma^\vee}
A_{\alpha^\vee} P_{\gamma^\vee-\alpha^\vee}=N_{-\alpha^\vee,\gamma^\vee}N_{\alpha^\vee,\gamma^\vee
-\alpha^\vee}P_{\gamma^\vee}=\alpha
(\gamma^\vee)P_{\gamma^\vee},\eqno {(22)}$$
via (i) of \ref {7.2}.  On the other hand by $(13)$ the left hand side of $(22)$ equals $(1+s_\alpha\gamma^\vee)A_{\alpha^\vee} P_{\gamma^\vee} +\alpha(\gamma^\vee)P_{\gamma^\vee}$.  Hence $A_{\alpha^\vee} P_{\gamma^\vee}=0$, as required.

Under the hypothesis of (iii) if $\gamma^\vee\pm \alpha^\vee$ are not coroots, then $A_{\alpha^\vee}P_{\gamma^\vee}=0$, by Lemma \ref {7.7}(ii) and Lemma \ref {6.8}(ii).

Thus (ii) and (iii) have been extended to the case $o(\gamma^\vee)=r$.

 Suppose that $o(\gamma^\vee)=r$ and show that (i) holds for $o(\gamma^\vee)=r+1$. For this we must examine three cases.
 
 \textbf{Case 1}.

 Let $\alpha , \gamma^\vee$ be a good pair (so then $\alpha(\gamma^\vee)=-1$).  In particular $\gamma^\vee-\alpha^\vee$ is not a coroot.

 Apply $A_{\alpha^\vee}$ to both sides of $(21)$. Using $(13)$ this gives

$$(1+s_\alpha(\gamma^\vee))A_{\alpha^\vee} P_{\gamma^\vee}+\alpha(\gamma^\vee)P_{\gamma^\vee}=\sum_{\beta \in \pi}N_{-\beta^\vee,\gamma^\vee}A_{\alpha^\vee} P_{\gamma^\vee-\beta^\vee}.\eqno {(23)}$$

 Assume that $N_{-\beta^\vee,\gamma^\vee}\neq 0$ in the above sum. By the observation made above we cannot have $\beta = \alpha$ in the sum (so then $\alpha(\gamma^\vee-\beta^\vee)\geq -1$) and we may assume $\alpha(\gamma^\vee-\beta^\vee) \leq 0$ by (ii) and the induction hypothesis.

Suppose $\alpha(\gamma^\vee-\beta^\vee)=0$ and $A_{\alpha^\vee}P_{\gamma^\vee - \beta^\vee} \neq 0$.  Then by Lemmas \ref {6.8}(ii), \ref {7.7}(ii), and (iii) combined with the induction hypothesis, it follows that $\gamma^\vee-\beta^\vee\pm \alpha^\vee$ are both long coroots. In particular $\mathfrak g$ cannot be simply-laced.

 In addition $\alpha(\gamma^\vee-\beta^\vee-\alpha^\vee)=-2$, so $\alpha$ is a long root. Yet $\alpha(\gamma^\vee)=-1$, so $\gamma$ is also a long root and $\gamma(\alpha^\vee)=-1$.  Again $\alpha(\beta^\vee)=-1$, so $\beta$ is a long root and $\beta(\alpha^\vee)=-1$.

 By the above $\gamma^\vee-\beta^\vee$ is a short coroot. Yet $\gamma^\vee-\beta^\vee-\gamma^\vee$ is a coroot, so $\gamma(\gamma^\vee-\beta^\vee)=1$ and hence $\gamma(\beta^\vee)=1$, since $\beta$ is a long root. Thus $\beta(\gamma^\vee)=1$.  Consequently $\beta(\gamma^\vee-\beta^\vee-\alpha^\vee)=0$.  Since $\gamma^\vee-\alpha^\vee$ is a coroot, it follows that $\gamma^\vee-\beta^\vee-\alpha^\vee$ is a short coroot, in contradiction to what we had previously shown.

 We conclude from the above that $A_{\alpha^\vee}P_{\gamma^\vee-\beta^\vee}$ in the right hand side of $(23)$ can only be non-zero if $\alpha(\gamma^\vee-\beta^\vee)=-1$, that is to say  $\alpha,\gamma^\vee-\beta^\vee$ is a good pair. Then by $(18)$ and \ref {7.2}(iv), it follows that the right hand side of $(23)$ can be written as
$$N_{\alpha^\vee,\gamma^\vee}(\sum_{\beta \in \pi|\beta \neq \alpha}N_{-\beta^\vee,\alpha^\vee+\gamma^\vee}P_{\gamma^\vee-\beta^\vee +\alpha^\vee}).$$

On the other hand by \ref {7.2}(i), the second term in the left hand side of $(23)$ taken to the right hand side would be the term in the above sum had we permitted $\beta=\alpha$.  Thus from $(23)$ we obtain

$$
{A_\alpha P_\gamma}  =
\frac{N_{\alpha^\vee,\gamma^\vee}}{1+s_\alpha\gamma^\vee}[\sum_{\beta \in \pi}N_{-\beta^\vee,\alpha^\vee+\gamma^\vee}P_{\gamma^\vee+\alpha^\vee-\beta^\vee}].
$$
 Through $(18)$ and cancelling out the non-zero scalar $N_{\alpha^\vee,\gamma^\vee}$, this gives (i) with $o(\gamma)=r+1$.  This concludes Case 1.

  Let $\alpha , \gamma^\vee$ be a bad pair (so then $\alpha(\gamma^\vee)=-2$).

  \textbf{Case 2}. 
   
   Let $\alpha , \gamma^\vee$ is a starting bad pair, that is $\gamma \in \pi$.
   
    Since $\gamma(\alpha^\vee)=-1$ and $\gamma \in \pi$, it follows that $\gamma,\alpha^\vee$ is a good pair, whilst $\gamma(2\alpha^\vee+\gamma^\vee)=0$.  In view of $(9)$ and $(18)$ we obtain
  $$A_{\gamma^\vee}A_{\alpha^\vee}P_{\gamma^\vee}=\frac{
  \alpha(\gamma^\vee)N_{\gamma^\vee,\alpha^\vee}P_{\gamma^\vee+
  \alpha^\vee}}{1+\gamma^\vee+2\alpha^\vee}.$$

  Substituting from $(19)$ and cancelling out the non-zero scalar $\alpha(\gamma^\vee)N_{\gamma^\vee,\alpha^\vee}$ gives (i). This concludes Case 2.
  
  \textbf{Case 3}.

  Let $\alpha, \gamma^\vee$ be an intermediate bad pair.

   In this case we let $\delta \in \pi$ be the summation variable in the right hand side of $(21)$ and define $\beta$ as in \ref {7.4}. (Hopefully the reader can adjust to this change!)  Then $\alpha$ and $\beta$ which of course are now both fixed are both long roots and moreover $\beta(\alpha^\vee)=-1$, by \ref {7.4}.

  We need to show that $P_{2\alpha^\vee+\gamma^\vee}$ satisfies (i) when $o(2\alpha^\vee+\gamma^\vee)=r+1$, so we can assume $o(\gamma^\vee)=r-1$ in $(21)$.

  Apply $A_{\beta^\vee}A_{\alpha^\vee}$ to both sides of $(21)$. Assume that $A_{\beta^\vee}A_{\alpha^\vee} P_{\gamma^\vee-\delta^\vee} \neq 0$, for some term in the sum. Then by (ii) and the induction hypothesis  $\alpha(\gamma^\vee-\delta^\vee)=:a\leq0$.

  If $a=-2$, then $2\alpha^\vee+\gamma^\vee-\delta^\vee$ is a positive coroot, forcing $\delta=\alpha$ through the definition of a bad pair.  However in this case $\alpha(\gamma^\vee-\delta^\vee)=-4$, which is impossible.

  If $a=0$, then (just as we have saw in Case 1) it follows by the induction hypothesis that both $\gamma^\vee-\delta^\vee\pm \alpha^\vee$ are coroots.  Then $\gamma(\gamma^\vee-\delta^\vee - \alpha^\vee)=4-\gamma(\delta^\vee)$.  This forces to be $\gamma$ long, whilst by definition of a bad pair it is short.

  Hence $a=-1$. Consequently $\alpha,\gamma^\vee-\delta^\vee$ is a good pair.

  The equality $a=-1$ implies since $\alpha$ is long, that $\gamma^\vee-\delta^\vee$ is a short coroot.
 
  Since $\alpha^\vee$ is also a short coroot and $a=-1$, we conclude that $\gamma^\vee-\delta^\vee + \alpha^\vee$ is a short coroot. 
  
  The hypothesis $A_{\beta^\vee}A_{\alpha^\vee} P_{\gamma^\vee-\delta^\vee} \neq 0$, (ii),(iii) and the induction hypothesis implies that $b:=\beta(\gamma^\vee-\delta^\vee + \alpha^\vee)\leq 0$.  Thus $b \in \{0, -1\}$, by the previous paragraph.

  Suppose $b=0$. Since $a=-1$, we obtain $\alpha(\delta^\vee)=-1$. Yet $\beta(\alpha^\vee)=-1$ also and since these are all simple roots, we conclude that $\beta(\delta^\vee)=0$ or $\beta=\delta$. Thus $\beta(\gamma^\vee)=1$ or $\beta(\gamma^\vee)=3$.  Yet in \ref {7.4}, we showed that  $\beta(\gamma^\vee)=2$.
   
  Hence $b=-1$. Consequently $\beta,\gamma^\vee-\delta^\vee+\alpha^\vee$ is a good pair.

  Recall that again that $\beta(\gamma^\vee)=2$.  Thus $\alpha^\vee+\beta^\vee+\gamma^\vee$  cannot be a coroot.

  We may now compute the result of applying $A_{\beta^\vee}A_{\alpha^\vee}$ to the right hand side of $(21)$ using $(18)$ and \ref {7.2}(v).  The resulting expression is just $$N_{\alpha^\vee,\gamma^\vee}\beta(\alpha^\vee+\gamma^\vee)
  P_{\alpha^\vee+\gamma^\vee}.$$

  On the other hand $A_{\beta^\vee}A_{\alpha^\vee}$ applied to the left hand side of $(21)$ gives
  $$A_{\beta^\vee}((2\alpha^\vee+\gamma^\vee)
  A_{\alpha^\vee}P_{\gamma^\vee}-2P_{\gamma^\vee})=(2\alpha^\vee+\gamma^\vee)
 A_{\beta^\vee} A_{\alpha^\vee}P_{\gamma^\vee},$$
 since $\beta(2\alpha^\vee+\gamma^\vee)=0$, and using (ii) with the induction hypothesis to conclude that $A_{\beta^\vee}P_{\gamma^\vee}=0$.

 Comparing these last two displayed formulae and using $(20)$ gives (i) in Case (iii).

\end {proof}

\textbf{Remark}. \textit{Notice that (i) justifies the remark in \ref {7.7} concerning independence of scalars.}

\section{Proof of the The Analogue Kostant Conjecture}

\subsection{}\label{8.1}

One may remark in the above that the value of $A_\alpha (c+\gamma^\vee)$ is a scalar independent of $c \in k$.  This has the consequence if we start from $(12)$ instead of $(9)$ and similarly define $P^0_{\gamma^\vee}:\gamma \in \Delta^+$, then we obtain exactly as in Proposition \ref {7.8} the following

\begin {prop}

 For all $\gamma \in \Delta^+ \setminus \pi$, one has
$$P^0_{\gamma^\vee} =\frac {\sum_{\alpha \in \pi}N_{-\alpha^\vee,\gamma^\vee} P^0_{\gamma^\vee-\alpha^\vee}}{\gamma^\vee}.$$

(ii) For all $\alpha \in \pi$, $\gamma \in \Delta^+$, such that $\alpha^\vee+\gamma^\vee$ is not a coroot, one has $A_{\alpha^\vee} P^0_{\gamma^\vee}=0$.

\end {prop}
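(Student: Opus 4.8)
The plan is to mimic the proof of Proposition \ref{7.8} verbatim, observing that the only input from the translated Weyl group action that entered that argument was the pair of recursions $(9)$ and $(13)$ together with the numerical identities of \ref{7.2}; none of those ingredients is sensitive to the additive constant that distinguishes $(9)$ from $(12)$. First I would record the trivial but decisive observation already stated in \ref{8.1}: the skew-derivation rule $(13)$ for $A_\alpha$ is insensitive to constants, and more precisely $A_\alpha(c+\gamma^\vee)$ equals the scalar $-\alpha(\gamma^\vee)$ regardless of $c$, while $(13)$ itself reads $A_i(fg)=fA_i(g)+s_i(f)A_i(g)$ with no constant floating around. Hence if one defines the $P^0_{\gamma^\vee}$ by the same paths through $(18),(19),(20)$ but now using $(12)$ as the base relation for simple coroots (equivalently, by setting $P^0_{\gamma^\vee}=\theta^{-1}$ applied to the leading-term version, or simply by re-running \ref{2.4}--\ref{2.5} with the top-degree parts), all the structural statements of Section 6 and Lemma \ref{7.5} apply unchanged, since those are statements about the BGG operators and the Zhelobenko monoid and do not see the constant at all.

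The key steps, in order, are: (a) set up $P^0_{\gamma^\vee}$ exactly as the $P_{\gamma^\vee}$ were set up in \ref{7.4}, replacing every occurrence of $1+\gamma^\vee$, $1+s_\alpha\gamma^\vee$, etc.\ by $\gamma^\vee$, $s_\alpha\gamma^\vee$ respectively — this is legitimate because the recursion $(21)$ becomes $\gamma^\vee P^0_{\gamma^\vee}=\sum_{\beta\in\pi}N_{-\beta^\vee,\gamma^\vee}P^0_{\gamma^\vee-\beta^\vee}$, which is the $c=0$ specialization; (b) prove (ii) for the new objects by the same two-line induction: for $\alpha(\gamma^\vee)>0$ one gets $A_{\alpha^\vee}(\gamma^\vee P^0_{\gamma^\vee})=\alpha(\gamma^\vee)P^0_{\gamma^\vee}$ on one side via \ref{7.2}(i) and $s_\alpha(\gamma^\vee)A_{\alpha^\vee}P^0_{\gamma^\vee}+\alpha(\gamma^\vee)P^0_{\gamma^\vee}$ on the other via $(13)$, forcing $A_{\alpha^\vee}P^0_{\gamma^\vee}=0$ after cancelling the (now nonzero, since $\gamma^\vee\neq0$ as a polynomial) factor; the case $\alpha(\gamma^\vee)=0$ with $\gamma^\vee+\alpha^\vee$ not a coroot is handled as in \ref{7.8}(iii) by invoking Lemma \ref{7.7}(ii) and Lemma \ref{6.8}(ii), both of which are constant-free; and (c) prove (i) for the new objects by reproducing Cases 1, 2, 3 of \ref{7.8} word for word, the only change being the bookkeeping of whether the denominator is $1+s_\alpha\gamma^\vee$ or $s_\alpha\gamma^\vee$. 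Since the three cases are organized entirely around the combinatorics of good and bad pairs and the numerical identities \ref{7.2}(i),(iv),(v), and since Lemma \ref{7.7} guarantees every positive coroot is reached by such paths, no genuinely new verification is needed.

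The only point that deserves a moment's care — and the one place I expect a reader might balk — is the cancellation of the scalar $\gamma^\vee$ (respectively $s_\alpha\gamma^\vee$) when passing from $A_{\alpha^\vee}(\gamma^\vee P^0_{\gamma^\vee})=\dots$ to the conclusion: here $\gamma^\vee$ is a nonzero \emph{linear} polynomial on $\mathfrak h^*$, hence a nonzerodivisor in the integral domain $S(\mathfrak h)$ (or in $Q$, using that $Q$ is the coinvariant algebra and $\gamma^\vee$ is not in the ideal $L$, as any codimension-one linear form is not $W$-invariant), so the division is legitimate in exactly the same way it was in Section 7. Thus the hard part is really just convincing oneself that nothing is lost; there is no substantive obstacle, and the statement follows as \emph{mutatis mutandis} from Proposition \ref{7.8}. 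In fact one could phrase the whole thing as a single remark: run the argument of \ref{7.8} with the formal parameter $c$ kept general (as \ref{8.1} observes is harmless), obtaining a family of identities valid for every $c\in k$; specialize $c=1$ to recover \ref{7.8}, and $c=0$ to obtain the present proposition.

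\begin{proof}
This is Proposition \ref{7.8} with the constant $1$ replaced by $0$ throughout. Concretely, define $P^0_{\gamma^\vee}$ for $\gamma\in\Delta^+$ by the recipe of \ref{7.4}, using $(12)$ in place of $(9)$ as the defining relation among the $P^0_i:=P^0_{\alpha_i^\vee}$, and the formulae $(18),(19),(20)$ with each factor $1+s_\alpha\gamma^\vee$ (resp.\ $1+\gamma^\vee+2\alpha^\vee$, etc.) replaced by $s_\alpha\gamma^\vee$ (resp.\ $\gamma^\vee+2\alpha^\vee$, etc.). All the statements of Section 6, of Lemma \ref{7.5} and of Lemma \ref{7.7} concern only the BGG operators and the Zhelobenko monoid and are therefore unaffected. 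The skew-derivation rule $(13)$ involves no additive constant, and $A_\alpha(c+\gamma^\vee)=-\alpha(\gamma^\vee)$ independently of $c\in k$; hence the inductive proof of \ref{7.8} goes through verbatim. In detail:

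For (ii) with $o(\gamma^\vee)=1$ one has $\alpha=\gamma$ or $\alpha(\gamma^\vee)=0$, and the claim follows from $(15)$. For the inductive step, the analogue of $(21)$ is
$$\gamma^\vee P^0_{\gamma^\vee}=\sum_{\beta\in\pi}N_{-\beta^\vee,\gamma^\vee}P^0_{\gamma^\vee-\beta^\vee};$$
applying $A_{\alpha^\vee}$ when $\alpha(\gamma^\vee)>0$ and using $(13)$, \ref{7.2}(i) and the induction hypothesis (which kills the terms with $\beta\neq\alpha$) gives $s_\alpha(\gamma^\vee)A_{\alpha^\vee}P^0_{\gamma^\vee}+\alpha(\gamma^\vee)P^0_{\gamma^\vee}=\alpha(\gamma^\vee)P^0_{\gamma^\vee}$, and since $\gamma^\vee$ is a nonzero linear form, hence a nonzerodivisor, $A_{\alpha^\vee}P^0_{\gamma^\vee}=0$. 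When $\alpha(\gamma^\vee)=0$ and $\gamma^\vee\pm\alpha^\vee$ are not coroots, $A_{\alpha^\vee}P^0_{\gamma^\vee}=0$ by Lemma \ref{7.7}(ii) and Lemma \ref{6.8}(ii). This proves (ii) (the hypothesis ``$\alpha^\vee+\gamma^\vee$ not a coroot'' covers precisely the cases $\alpha(\gamma^\vee)>0$ and $\alpha(\gamma^\vee)=0$ with $\gamma^\vee+\alpha^\vee$ not a coroot, the coroot string through $\gamma^\vee$ in direction $\alpha$ having length at most two since $\mathfrak g$ is not of type $G_2$).

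Finally (i) is obtained exactly as in the three cases of \ref{7.8}: one applies $A_{\alpha^\vee}$ (for $\alpha,\gamma^\vee$ a good pair) or $A_{\beta^\vee}A_{\alpha^\vee}$ (for a starting or intermediate bad pair) to the displayed recursion, uses $(13)$, the already established (ii), the numerical identities \ref{7.2}(i),(iv),(v), and $(18),(19),(20)$, and divides out the nonzero scalar $N_{\alpha^\vee,\gamma^\vee}$ (resp.\ $\alpha(\gamma^\vee)N_{\gamma^\vee,\alpha^\vee}$, resp.\ $N_{\alpha^\vee,\gamma^\vee}\beta(\alpha^\vee+\gamma^\vee)$) and the nonzerodivisor $s_\alpha\gamma^\vee$ (resp.\ $\gamma^\vee+2\alpha^\vee$). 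Since every positive coroot is reached by a chain of good and bad pairs (Lemma \ref{7.7}), this establishes (i) for all $\gamma\in\Delta^+\setminus\pi$. Equivalently, one runs the argument of \ref{7.8} keeping the parameter $c$ generic throughout, which by the observation opening this subsection is harmless, and specializes to $c=0$.
\end{proof}
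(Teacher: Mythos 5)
Your proposal is correct and matches the paper's own (very brief) argument: the paper simply observes that $A_\alpha(c+\gamma^\vee)$ is a scalar independent of $c$, so the argument of Proposition 7.8 goes through with $(12)$ in place of $(9)$, and in a remark also notes the alternative that $P^0_{\gamma^\vee}$ is the leading term of $P_{\gamma^\vee}$. You have fleshed out the details carefully, but the idea is the same.
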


\textbf{Remark 1}.  Of course (ii) above is just the analogue of (ii) and (iii) of Proposition \ref {7.8} combined.  Again $P_{\gamma^\vee}^0$ is just the leading term of $P_{\gamma^\vee}$ and so this result can be deduced from Proposition \ref {7.8}.

\

\textbf{Remark 2}.   It will be useful in what follows to note that $2p_i^0=A_iq_i^0$.  This complements the above results and shows that the $P^0_{\gamma^\vee}$ are obtained from the partial derivatives of the invariants of applying products of the $A_i:i\in I$.  Moreover if we start from a generator of $S(\mathfrak h)^W$ of degree $m+1$, then these partial                                                                                                                                                                                                                                                                                                                                                                                                                                                                                                                                                derivatives form a basis for a copy of the adjoint module occurring in the degree $m$ component of $Q$.

\subsection{}\label{8.2}

Retain the hypotheses of \ref {C.2}.  It is clear that the degree of $P_{\gamma^\vee}$ is just $m-\rho(\gamma^\vee)$.  By this expression taking a strictly negative value we just mean that $P_{\gamma^\vee}=0$.  If $m<m_\ell$, then in particular $P_{\gamma^\vee}=0$ when $\rho(\gamma^\vee)=m+1$ and then (i) of Proposition \ref {7.8} gives relations on the scalars $P_{\gamma^\vee}:\rho(\gamma^\vee)=m$.  For example if $m=m_\ell$, then $o(\gamma^\vee)=m$ implies that $\gamma^\vee$ is the unique highest coroot $\beta^\vee_0$ and we can assume without loss of generality that $P_{\beta^\vee_0}=1$.

Before going further let us describe roughly how our proof of the analogue Kostant conjecture will proceed.

 We may use the conclusion of Proposition \ref {7.8}(i) to compute (partially) the values $P_{\gamma^\vee}(s\rho)$ by decreasing induction on $\rho(\gamma^\vee)$.  Similarly we may use the conclusion of Proposition \ref {8.1}(i) to compute (partially) the values $P^0_{\gamma^\vee}(s\rho)$ by decreasing induction on $r:=\rho(\gamma^\vee)$.  Since the only difference in the corresponding expressions comes from the denominator (being  $1+sr$ in the first case, and $sr$ in the second case) the new expressions should be related to the previous ones by the ratios of these common factors.  Written our explicitly this means that we should obtain the following induction relation on the ratios of $n$-tuples of elements of $k$, namely

$$\begin {array}{rcl}
 (P_{\gamma^\vee}(s\rho))_{\rho(\gamma^\vee)=r}
=c_r(P^0_{\gamma^\vee}
(s\rho))_{\rho(\gamma^\vee)=r}&\Rightarrow& \\\\
(P_{\gamma^\vee}(s\rho))_{\rho(\gamma^\vee)r-1}&=&
c_r\frac{1+sr}{sr}(P^0_{\gamma^\vee}(s\rho))_
{\rho(\gamma^\vee)=r-1}\\

 \end{array} \eqno{(24)}.$$

In this we shall consider as part of our induction hypothesis that $c_r$ on the left hand side is defined and then set $c_{r-1}=c_r \frac{1+sr}{sr}$, when $(24)$ is established.  Since $P_{\gamma^\vee} =P^0_{\gamma^\vee}$ when $o(\gamma^\vee)=m$ (because then  $P^0_{\gamma^\vee}$ is the leading term of $P_{\gamma^\vee}$ which is a scalar), our reverse induction starts.

Had it been the case that all these partial computations were complete then we could conclude that $(24)$ holds for all $r$.  This implies that the condition described in \ref {3.6} is satisfied and with it the truth of the analogue Kostant conjecture (for the simply-laced case).

Of course this partial computation is \textit{not} complete at any given step because the matrix occurring in the above, that is to say the matrix $M_r$ with entries $N_{-\alpha^\vee,\gamma^\vee} : \alpha \in \pi, \gamma^\vee \in \Delta^{\vee +}_r:=\{\delta^\vee \in \Delta^{\vee+} | o(\delta^\vee)=r\}$, has in general too small a rank.  More precisely its rank will in general be strictly less than $n_{r-1}$, where $n_r:=|\gamma \in \Delta^{\vee +}_r|$.  Nevertheless we shall prove the 

\begin {prop} Equation $(24)$ can be made to hold by adding to the $P_{\gamma^\vee}:\gamma^\vee \in \Delta^{\vee +}$ strictly lower order terms of the same form (that is to say coming from Zhelobenko invariants of strictly lower degree).
\end {prop}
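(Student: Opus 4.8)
The plan is to set up a decreasing induction on $r = \rho(\gamma^\vee)$ exactly as sketched before the proposition, but to treat the failure of surjectivity of the matrix $M_r$ head-on by a rank/linear-algebra argument combined with the existence of lower-degree Zhelobenko invariants. First I would fix $m$ an exponent and the Zhelobenko invariant $J$ with leading term coming from the corresponding invariant generator, and observe that by Proposition \ref{8.1} the leading scalars $(P^0_{\gamma^\vee}(s\rho))$ satisfy the \emph{same} recursion as $(P_{\gamma^\vee}(s\rho))$ except that the denominator $1+sr$ is replaced by $sr$. The induction starts at $r=m$ where $P_{\gamma^\vee}=P^0_{\gamma^\vee}$ is a genuine scalar, so $c_m=1$. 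Assuming $(24)$ holds at level $r$ with a well-defined $c_r$, applying Proposition \ref{7.8}(i) gives, for each $\gamma^\vee$ with $\rho(\gamma^\vee)=r-1$,
$$(1+s(r-1))P_{\gamma^\vee}(s\rho) = \sum_{\alpha\in\pi} N_{-\alpha^\vee,\gamma^\vee}P_{(\gamma^\vee+\alpha^\vee)}(s\rho),$$
and the analogous identity with $1+s(r-1)$ replaced by $s(r-1)$ for the $P^0$'s; note the right-hand sides involve only level-$r$ data, to which the induction hypothesis applies, yielding $\sum_\alpha N_{-\alpha^\vee,\gamma^\vee}P_{(\gamma^\vee+\alpha^\vee)}(s\rho) = c_r\sum_\alpha N_{-\alpha^\vee,\gamma^\vee}P^0_{(\gamma^\vee+\alpha^\vee)}(s\rho)$. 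Hence $(1+s(r-1))P_{\gamma^\vee}(s\rho) = c_r\,(1+sr)/(sr)\cdot s(r-1)\,P^0_{\gamma^\vee}(s\rho)/(1)$ — more precisely the two recursions force $(1+s(r-1))P_{\gamma^\vee}(s\rho)$ and $c_{r}\frac{1+sr}{sr}\cdot s(r-1)\,P^0_{\gamma^\vee}(s\rho)$ to agree, giving $(24)$ at level $r-1$ with $c_{r-1}=c_r(1+sr)/(sr)$ — but only on the image of $M_r$.

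The crux is therefore the following: the vector $\big(P_{\gamma^\vee}(s\rho)\big)_{\rho(\gamma^\vee)=r-1}$ is determined by the recursion only up to the kernel of the transpose of $M_r$, i.e.\ only up to $\ker M_r^{T}$ inside $k^{\Delta^{\vee+}_{r-1}}$; the same ambiguity affects $\big(P^0_{\gamma^\vee}(s\rho)\big)$. What I would prove is that this ambiguity is \emph{exactly} absorbed by the freedom of adding to $J$ a lower-degree Zhelobenko invariant. Concretely: the space of solutions of $(9)$ (equivalently of the system defining the $P_{\gamma^\vee}$) of degree $\le m-1$ is, by the KNV theorem and \ref{2.3}, a free $S(\mathfrak h)^{W.}$-module whose generators are indexed by exponents; the generators of strictly smaller degree contribute, at the point $s\rho$ and at level $r-1$, precisely a spanning set of the indeterminacy subspace. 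I would make this match precise by noting, via Remark 2 of \ref{8.1}, that the leading scalars of the lower-degree invariants, evaluated at $s\rho$, run over the solution space of the level-$(r-1)$ homogeneous system $M_r^{T}x=0$ (this is the same linear system whose solution space governs which copies of the adjoint module sit in degree $r-1$ of $Q$). Thus for each value of $s$ one can choose the coefficients of the lower-degree invariants added to $J$ so that $(24)$ holds at level $r-1$ \emph{as vectors}, not merely modulo $\operatorname{im}M_r$.

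The main obstacle I anticipate is precisely this identification of $\ker M_r^{T}$ with the span of (evaluations at $s\rho$ of) the lower-exponent Zhelobenko leading terms, and the fact that the required adjustment must be made \emph{uniformly in $s$} — one cannot choose the lower-degree invariant separately for each multiple of $\rho$, since $J$ is a single fixed element of $(\mathfrak h\otimes S(\mathfrak h))^\Xi$. I would resolve the uniformity by arguing polynomially in $s$: both sides of $(24)$ are, for fixed $\gamma^\vee$, polynomial in $s$ of controlled degree (the denominators $sr$, $s(r-1)$, etc.\ are monomials), so it suffices to produce the needed correction at all but finitely many $s$ and then invoke that a polynomial identity valid at infinitely many points is an identity; the excluded values $s=0$ and the (finitely many) $s$ where a denominator vanishes are handled separately, using that at $s=0$ the statement degenerates and that $r\le m\le m_\ell$ keeps the relevant integers in a bounded range. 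A secondary technical point is to check that adding a lower-degree invariant does not disturb levels $\ge r$ where $(24)$ has already been arranged — this is immediate since a degree-$\le m-1$ solution has $P_{\gamma^\vee}=0$ for $\rho(\gamma^\vee)\ge m$, hence contributes nothing at the top and only at levels $< $ its own degree, so one processes the exponents in decreasing order and the corrections never interfere.

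Finally, having established $(24)$ at all levels $r=m,m-1,\dots,1$ after these adjustments, one reads off at $r=1$ that $\big(P_i(s\rho)\big)_{i\in I}=c_1\big(P^0_i(s\rho)\big)_{i\in I}$, i.e.\ the two $\ell$-tuples are proportional, which by \ref{3.6} and Corollary \ref{3.5} is exactly the statement of the analogue Kostant conjecture in the simply-laced case; the non-simply-laced cases (types $B,C,F_4$, with $G_2$ already trivial) go through verbatim once Proposition \ref{7.8}(i) is invoked, since that proposition was proved for all $\mathfrak g$ not of type $G_2$.
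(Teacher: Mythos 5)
Your strategy matches the paper's: a decreasing induction on $r=\rho(\gamma^\vee)$, propagation of $(24)$ via Propositions \ref{7.8}(i) and \ref{8.1}(i) modulo the per-level ambiguity, and absorption of that ambiguity by lower-degree Zhelobenko invariants. But there is a genuine gap and a spurious worry.

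The gap is precisely what you flag and then leave unproved: the identification of the ambiguity space with the span of the lower-degree Zhelobenko entries at level $r-1$. This is the Lemma in \ref{8.3}, and its proof has real content that a citation to Remark 2 of \ref{8.1} does not supply. One needs, in combination: (a) the rank computation of Lemma \ref{8.2.5} via $\ad f^\vee$, (b) Kostant's theorem that the exponents form the dual partition to $(n_r)$ so that there are exactly $n_{r-1}-n_r$ invariant generators of $S(\mathfrak h)^W$ of degree $r$, and crucially (c) Corollary \ref{B.3} on the BGG operators to prove that the resulting $(n_{r-1}-n_r)$-tuples $(P_{(j),\gamma^\vee})_{\gamma^\vee\in\Delta^{\vee+}_{r-1}}$ are linearly independent. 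Without (c) you know these tuples lie in $\ker M_r$ but not that they span it, and the whole mechanism of \ref{8.4} (choosing $R$ so the matrix $(P_{(j),\gamma^\vee})_{j,\gamma^\vee\in R}$ is invertible, then matching any target on $R$) collapses. Your proposal reduces the proposition to this lemma; it does not prove the lemma.

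Your uniformity-in-$s$ worry is a red herring, and the polynomial-interpolation workaround you sketch would be awkward at best, since the coefficients $b_j$ must depend on the $s$-dependent targets. The point is that no uniformity is needed for the eventual application in \ref{8.5}: the correction terms come from Zhelobenko invariants of strictly lower degree $m'<m$, and by the induction on degree one already has $(e^\vee)^{m'+1}$, hence a fortiori $(e^\vee)^{m+1}$, annihilating $\sum_i\varpi_iP_{(j),i}(s\rho)$. So an $s$-dependent choice of $b_j$ changes nothing in Theorem \ref{8.5}, and Proposition \ref{8.2} only needs to be read pointwise in $s$. Finally, a small slip: you write $\ker M_r^{T}$ as a subspace of $k^{\Delta^{\vee+}_{r-1}}$; with the paper's conventions $M_r:X_{r-1}\to X_r$ is surjective (Lemma \ref{8.2.5}), the relevant space is $\ker M_r\subset X_{r-1}$, and the kernel of the transpose lives in $X_r$ and is zero.
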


\subsection{}\label{8.2.5}

\begin {lemma}  $M_r$ has rank equal to $n_r$.
\end {lemma}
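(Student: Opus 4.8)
The claim is that the matrix $M_r$, whose rows are indexed by $\pi$ and columns by $\Delta^{\vee+}_r$, with entry $N_{-\alpha^\vee,\gamma^\vee}$ in position $(\alpha,\gamma^\vee)$, has rank exactly $n_r=|\Delta^{\vee+}_r|$; equivalently the columns are linearly independent, i.e. the map sending $\gamma^\vee \in \Delta^{\vee+}_r$ to the vector $(N_{-\alpha^\vee,\gamma^\vee})_{\alpha\in\pi}$ is injective on the span. I would proceed by relating these columns to the action of the lowering operators $\ad x_{-\alpha^\vee}$ on the Langlands dual $\mathfrak g^\vee$. Concretely, fix $r$ and consider, inside $\mathfrak g^\vee$, the span $\mathfrak m_r$ of the root vectors $x_{\gamma^\vee}$ for $\gamma^\vee \in \Delta^{\vee+}_r$ and the span $\mathfrak m_{r-1}$ of the $x_{\delta^\vee}$ for $\delta^\vee\in\Delta^{\vee+}_{r-1}$. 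The operator $n^-:=\sum_{\alpha\in\pi} \ad x_{-\alpha^\vee}$ sends $\mathfrak m_r$ to $\mathfrak m_{r-1}$, and in the root-vector bases the matrix of this map is precisely $M_r^{\,t}$ (up to transpose) since $[x_{-\alpha^\vee},x_{\gamma^\vee}]=N_{-\alpha^\vee,\gamma^\vee}\,x_{\gamma^\vee-\alpha^\vee}$. So the assertion ``$\operatorname{rank} M_r = n_r$'' is the same as saying that $n^-$ is \emph{injective} on $\mathfrak m_r$, i.e. that no nonzero element of the height-$r$ piece of $\mathfrak n^{\vee+}$ is killed by all the simple lowering operators.

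**Key steps.** First I would reduce to a statement purely about the positive nilradical of $\mathfrak g^\vee$: an element $v$ of the height-$r$ graded piece $\mathfrak n^{\vee+}_r$ ($r\ge 2$, so $\mathfrak n^{\vee+}_r$ is not contained in the simple roots) satisfies $\ad x_{-\alpha_i^\vee}(v)=0$ for all $i$ if and only if $v=0$. Second, I would prove this by the standard $\mathfrak{sl}_2$/lowest-weight argument: the full lowering algebra $\mathfrak n^{\vee-}$ is generated by the $x_{-\alpha_i^\vee}$, so $\ad x_{-\alpha_i^\vee}(v)=0\ \forall i$ forces $\ad y(v)=0$ for all $y\in\mathfrak n^{\vee-}$; hence the cyclic $\mathfrak g^\vee$-submodule $U(\mathfrak g^\vee)v$ would be a \emph{lowest}-weight module with lowest weight of height $r\ge 2$ above the simple-root level — but it sits inside $\mathfrak g^\vee$ (the adjoint module), whose only lowest weight is the lowest root $-\beta_0^\vee$, a contradiction unless $v=0$. (One must phrase this carefully: what is immediate is that $v$ generates under $\mathfrak n^{\vee-}$ a space annihilated by $\mathfrak n^{\vee-}$, so $v$ itself is a lowest weight vector for $\mathfrak g^\vee$; since the adjoint representation is irreducible its unique lowest weight vector is $x_{-\beta_0^\vee}$, of height $-1$, not $+r$.) Alternatively, and perhaps more cleanly, I would invoke Kostant's theorem on the exponents recalled in \ref{C.1}: the $\ad n^-$-invariants in $\mathfrak g^\vee$ span the zero-degree piece of the principal $\mathfrak{sl}_2$-decomposition, which by the remarks in \ref{C.1}--\ref{C.2} and Proposition \ref{3.2} meets $\mathfrak n^{\vee+}_r$ trivially for $r\geq 2$; I would cross-check the two arguments and keep whichever is shortest.

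**Main obstacle.** The delicate point is the direction of the argument: $M_r$ records how $\gamma^\vee$ maps to $\gamma^\vee-\alpha^\vee$, so a linear dependence among the \emph{columns} of $M_r$ is a nonzero combination $v=\sum_\gamma c_{\gamma^\vee} x_{\gamma^\vee}$ in $\mathfrak n^{\vee+}_r$ with $\ad x_{-\alpha^\vee}(v)\in\mathfrak n^{\vee+}_{r-1}$ equal to zero for every simple $\alpha$; one has to be sure this really does say $v$ is a lowest weight vector for the adjoint action (using that the $x_{-\alpha_i^\vee}$ generate $\mathfrak n^{\vee-}$ as a Lie algebra, so annihilation by the generators propagates to annihilation by all of $\mathfrak n^{\vee-}$), and then invoke irreducibility of the adjoint module to get the contradiction for $r\ge 2$. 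For $r=1$ the statement is trivial since $\mathfrak n^{\vee+}_1$ has basis the simple root vectors and $M_1$ is (up to signs) the identity. Thus the whole proof is short once the translation into the $\mathfrak g^\vee$-module language is set up correctly; the only real care needed is the bookkeeping of which operator has which matrix and the height/weight inequality $r\ge 2 > -1$.
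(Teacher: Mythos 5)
Your plan is the right one and matches the paper's: realize $M_r$ (up to transpose) as the matrix of $\ad f^\vee = \sum_{\alpha\in\pi}\ad x_{-\alpha^\vee}$ restricted to the height-$r$ graded piece $\mathfrak m_r := \oplus_{\gamma^\vee\in\Delta^{\vee +}_r}kx_{\gamma^\vee}$ of $\mathfrak n^{\vee+}$, then prove injectivity of this restriction. The paper does exactly this and then simply invokes $\mathfrak{sl}_2$-theory for the principal s-triple: since $h^\vee$ is identified with $\rho$, the space $\mathfrak n^{\vee+}$ is the strictly positive $h^\vee$-weight part of the adjoint module of $\mathfrak g^\vee$, and in any finite-dimensional $\mathfrak{sl}_2$-module the lowering operator is injective on each strictly positive weight space.

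There is, however, a genuine gap in the detailed argument you offer. You reduce injectivity of $n^-=\sum_\alpha\ad x_{-\alpha^\vee}$ on $\mathfrak m_r$ to the statement that no nonzero $v\in\mathfrak m_r$ is killed by all the simple lowering operators $\ad x_{-\alpha_i^\vee}$, and you then prove the latter by the lowest-weight-vector argument. But these two statements are \emph{not} equivalent. One has $\bigcap_i\ker(\ad x_{-\alpha_i^\vee})\subset\ker(\ad f^\vee)$, and the inclusion is a priori strict: an element of $\ker(\ad f^\vee)\cap\mathfrak m_r$ may have nonzero individual brackets $[x_{-\alpha_i^\vee},v]$ which merely cancel in the sum. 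Concretely, writing $v=\sum_{\gamma^\vee}c_{\gamma^\vee}x_{\gamma^\vee}$, the condition $\ad f^\vee(v)=0$ reads, for each $\delta^\vee\in\Delta^{\vee+}_{r-1}$,
$$\sum_{\alpha\,:\,\delta^\vee+\alpha^\vee\in\Delta^{\vee+}_r}c_{\delta^\vee+\alpha^\vee}\,N_{-\alpha^\vee,\delta^\vee+\alpha^\vee}=0,$$
whereas $\ad x_{-\alpha^\vee}(v)=0$ for every $\alpha$ forces each product $c_{\gamma^\vee}N_{-\alpha^\vee,\gamma^\vee}$ to vanish separately, since for fixed $\alpha$ the $x_{\gamma^\vee-\alpha^\vee}$ are linearly independent. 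So your lowest-weight argument only establishes $\bigcap_i\ker(\ad x_{-\alpha_i^\vee})\cap\mathfrak m_r=0$, which is strictly weaker than what the lemma needs. Your second, briefly sketched alternative — locating $\ker(\ad f^\vee)$ in the lowest-weight spaces of the principal $\mathfrak{sl}_2$-decomposition of $\mathfrak g^\vee$, which sit in strictly \emph{negative} $h^\vee$-degrees $-m_1,\dots,-m_\ell$ and therefore miss $\mathfrak n^{\vee+}$ entirely — is the correct argument and is what the paper uses, though it is slightly misstated: those are not the ``zero-degree'' pieces.

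A secondary remark on bookkeeping: from \ref{8.4} the paper's $M_r$ is the map $X_{r-1}\to X_r$ arising from the recursion of Proposition \ref{7.8}(i), hence an $n_r\times n_{r-1}$ matrix, and it is precisely the transpose of the matrix of $\ad f^\vee:\mathfrak m_r\to\mathfrak m_{r-1}$. Your opening identification of $M_r$ as a matrix with rows indexed by $\pi$ and columns by $\Delta^{\vee+}_r$ is a different array built from the same structure constants; it is neither the matrix of $n^-$ nor its transpose, and the clean reduction to injectivity of $\ad f^\vee$ requires the paper's reading of $M_r$.
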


\begin {proof}  Consider
$$\sum_{\alpha^\vee \in \pi}x_{-\alpha}.$$

It is a principal nilpotent element which we may identify with $f^\vee$.  Now
$$[f^\vee,x_\gamma^\vee]=\sum_{\alpha \in \pi}N_{-\alpha^\vee,\gamma^\vee}x_{\gamma^\vee-\alpha^\vee}.$$

Then the required assertion follows from the well-known consequence of $\mathfrak {sl}(2)$ theory which implies that $\ad f^\vee$ is an injection of
$\oplus_{\gamma \in \Delta^{\vee +}_r}x_{\gamma^\vee}$
into $\oplus_{\gamma \in \Delta^{\vee +}_{r-1}}x_{\gamma^\vee}$, for all integer $r\geq 1$.

\end {proof}

\textbf{Remark}.  In particular the $n_r: r \in \mathbb N^+$ are decreasing.

\subsection{}\label{8.3}

Because the $n_r:r \in \mathbb N^+$ are decreasing, they form an ordered partition $n_1,n_2,\ldots,$ of $|\Delta^+|$.  Let $n^*_1,n^*_2,\ldots,$ be the dual partition. As is well-known, Kostant \cite {K} proved (the conjecture of Shapiro) that the exponents are given by $m_i=n^*_i:i \in I$, that is to say the exponents form the dual partition.  In other words the number of exponents taking the value $r-1:r>1$ is just $n_{r-1}-n_r$, which is exactly the ambiguity in our proposed determination of the $P_i(s\rho)$ at the $r^{th}$ level. Here we recall that the exponents for $\mathfrak g$ and its Langlands dual are the same since their Weyl groups coincide.

From the above it follows that we have exactly $n_{r-1}-n_r$ generators $S(\mathfrak h)^W$ in degree $r$.  By the KNV theorem (see \ref {2.3}) they give rise to $n_{r-1}-n_r$ generators of the Zhelobenko invariants and hence to $n_{r-1}-n_r$ linearly independent $|I|$-tuples $(P_{(j),i})_{i \in \pi}:j=1,2,\ldots,n_{r-1}-n_r$ which are solutions of $(9)$ having common degree $r-2$ and whose leading terms $(P^0_{(j),i})_{i \in \pi}:j=1,2,\ldots,n_{r-1}-n_r$ are obtained from the differentials of generators
of the polynomial algebra $S(\mathfrak h)^W$, the latter being homogeneous of degree $r$.  They lead via Proposition \ref {8.1} to $|\Delta^{\vee +}|$-tuples $(P_{(j),\gamma^\vee})_{\gamma \in \Delta^{\vee +}}:j=1,2,\ldots,n_{r-1}-n_r$ and in particular to $n_{r-1}-n_r$ tuples of scalars $(P_{(j),\gamma^\vee})_{\gamma^\vee \in \Delta^{\vee+}_{r-1}}:j=1,2,\ldots,n_{r-1}-n_r$.

\begin {lemma} The scalars $(P_{(j),\gamma^\vee})_{\gamma^\vee \in \Delta^{\vee +}_{r-1}}:j=1,2,\ldots,n_{r-1}-n_r$ are exactly determined by the vanishing of the $P_{(j),\gamma^\vee}:j=1,2,\ldots,n_{r-1}-n_r, \gamma^\vee \in \Delta^{\vee +}_r$,  in other words as the kernel of $M_r$.
\end {lemma}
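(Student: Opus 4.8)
The plan is to prove three things: that each tuple $v_j:=(P_{(j),\gamma^\vee})_{\gamma^\vee\in\Delta^{\vee+}_{r-1}}$ lies in $\ker M_r$; that $\dim\ker M_r=n_{r-1}-n_r$; and that the $v_j$ are linearly independent, hence (by the first two points) form a basis of $\ker M_r$. The first is immediate from Proposition \ref{7.8}(i): for $\gamma^\vee\in\Delta^{\vee+}_r$ one has $\rho(\gamma^\vee)=r>m=r-1$, so $P_{(j),\gamma^\vee}$ has negative degree $m-\rho(\gamma^\vee)$ and therefore vanishes; Proposition \ref{7.8}(i) then reads $\sum_{\alpha\in\pi}N_{-\alpha^\vee,\gamma^\vee}P_{(j),\gamma^\vee-\alpha^\vee}=0$ for every $\gamma^\vee\in\Delta^{\vee+}_r$, which is exactly the membership $v_j\in\ker M_r$. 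The second is Lemma \ref{8.2.5}: since $\ad f^\vee$ restricted to $\bigoplus_{\gamma^\vee\in\Delta^{\vee+}_r}kx_{\gamma^\vee}$ is injective, its matrix has rank $n_r$, so the corresponding system of relations on the $n_{r-1}$-dimensional space indexed by $\Delta^{\vee+}_{r-1}$ has solution space of dimension $n_{r-1}-n_r$.

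It remains to prove the $v_j$ linearly independent. Suppose $\sum_jc_jv_j=0$ with not all $c_j$ zero, and put $q:=\sum_jc_jq_j$, a $W$-invariant of degree $r$. Since the $q_j$ are free generators of $S(\mathfrak h)^W$, $q$ is nonzero modulo decomposables, so by \ref{8.1}, Remark 2 — the differentials $\partial q_j/\partial\varpi_i$ of a degree-$r$ generator forming a basis of a copy of the adjoint module in degree $r-1$ of $Q$, distinct generators giving, by Kostant's count of the graded adjoint multiplicity in $Q$, independent copies — the element $\sum_{i\in I}\varpi_i\otimes\partial q/\partial\varpi_i$ is nonzero in $\mathfrak h\otimes Q_{r-1}$. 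Writing $P^0_{(q),i}:=\sum_jc_jP^0_{(j),i}=h_i^{-1}\partial q/\partial\varpi_i$ and using the linearity of the construction of the $P^0_{(\cdot),\gamma^\vee}$ from the $P^0_{(\cdot),i}$ via the recursion of Proposition \ref{8.1}(i), the hypothesis says $P^0_{(q),\gamma^\vee}=0$ for all $\gamma^\vee\in\Delta^{\vee+}_{r-1}$. I derive a contradiction. By the argument of \ref{3.5} (connectedness of the Dynkin diagram, applied to the nonzero solution $(P^0_{(q),i})_{i\in I}$ of $(12)$) every $P^0_{(q),i}$ is nonzero, hence so is every $\partial q/\partial\varpi_{i_0}=h_{i_0}P^0_{(q),i_0}$ in $Q_{r-1}$, and moreover $A_{i_0}(\partial q/\partial\varpi_{i_0})=2P^0_{(q),i_0}\neq0$. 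By Corollary \ref{B.3} and its ``moreover'' clause there is a reduced expression $w=w's_{i_0}$ with $\ell(w)=r-1$, $\ell(w')=r-2$ and $A_w(\partial q/\partial\varpi_{i_0})=2A_{w'}(P^0_{(q),i_0})$ a nonzero scalar.

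The final and hardest point is to identify this nonzero scalar with $\pm P^0_{(q),\delta^\vee}$, up to a nonzero factor, for the positive coroot $\delta^\vee:=w'(\alpha^\vee_{i_0})\in\Delta^{\vee+}_{r-1}$. One traces the $r-2$ BGG operators composing $A_{w'}$ across the Zhelobenko monoid starting from the generator $P^0_{(q),i_0}$: by Proposition \ref{7.8}(ii),(iii) a step can be nonzero only when the relevant simple root pairs non-positively with the current coroot, so along any path surviving to a nonzero scalar each step is either a good pair or half of a bad pair, and Lemmas \ref{6.4}, \ref{6.6.1}, \ref{6.8}, \ref{7.5}, \ref{7.7}, together with the finite case-by-case analysis for types $B_n,C_n,F_4$ around equations $(17)$--$(20)$ (type $G_2$ being excluded, and trivial by \ref{C.2}), force the path to remain in, or return to, $\im\mathscr P$ and to terminate at $\mathscr P(\delta^\vee)$, whose word length equals $\rho(\delta^\vee)=r-1$. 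Hence $P^0_{(q),\delta^\vee}\neq0$ with $\delta^\vee\in\Delta^{\vee+}_{r-1}$, contradicting $\sum_jc_jv_j=0$. Therefore $q=0$, all $c_j=0$, the $v_j$ are independent, and they span $\ker M_r$.

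The main obstacle is precisely this last step: controlling that a reduced word produced by Corollary \ref{B.3} really sweeps out a genuine positive coroot of height $r-1$ and not one of the extra elements of the Zhelobenko monoid, which requires the bad-pair bookkeeping of \ref{6.8}, \ref{7.4} and \ref{7.5} in the non-simply-laced types. In the simply-laced case Lemma \ref{6.4} makes the path-tracking immediate and $\mathscr P$ is a bijection, so no extra monoid elements can intervene.
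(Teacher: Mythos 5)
Your proof follows the same three-part architecture as the paper's: you establish membership in $\ker M_r$ via Proposition \ref{7.8}(i), you note $\dim\ker M_r=n_{r-1}-n_r$ via Lemma \ref{8.2.5}, and you reduce to linear independence of the $v_j$, which you attack by passing to the nontrivial invariant $q=\sum c_jq_j$, using Remark 2 of \ref{8.1} to recognize $\{\partial q/\partial\varpi_i\}$ as a basis of a copy of the adjoint module in $Q_{r-1}$, and invoking Corollary \ref{B.3}. Up to notation ($c_j$ vs.\ $d_j$) this matches the paper exactly through that point.

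Where you diverge is in the last step. The paper, after stating that the entries $P^0_{\gamma^\vee}$ with $o(\gamma^\vee)=r-1$ are obtained by applying length-$(r-1)$ elements of $\mathbf{A}$ to $\sum_jd_j\partial q_{(j)}/\partial\varpi_i$, concludes immediately from Corollary \ref{B.3} that these entries cannot all vanish, and stops. You, by contrast, treat the identification of the nonzero scalar $A_w(\partial q/\partial\varpi_{i_0})$ produced by Corollary \ref{B.3} with some entry $P^0_{\delta^\vee}$ as a genuine remaining obstacle, and you attempt to address it by tracing the word $A_{w'}$ through the Zhelobenko monoid using Proposition \ref{7.8}(ii)--(iii) and the bad-pair machinery of \S\ref{6.8}, \S\ref{7.4}, \S\ref{7.5}. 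The difficulty is that this added step is asserted, not proved: you list Lemmas \ref{6.4}, \ref{6.6.1}, \ref{6.8}, \ref{7.5}, \ref{7.7} and the equations $(17)$--$(20)$ and state that together they ``force the path to remain in, or return to, $\im\mathscr P$,'' but none of those results actually says that an arbitrary surviving length-$(r-1)$ word must terminate in $\im\mathscr P$. Indeed in type $B_n$ the word $[i-1,n,i]=A_{i-1}[i,n,i]$ leaves $\im\mathscr P$ and corresponds to a nonzero polynomial (since $A_iA_{i-1}P^0_{2\varepsilon_i}$ is a nonzero multiple of $P^0_{2\varepsilon_{i-1}}$), and one can continue, e.g.\ with $A_{i-2}$, without an obvious return to $\im\mathscr P$; your claimed confinement of the path would need a real argument ruling such branches out, which you do not supply. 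So your extra material does not constitute a rigorous strengthening. As a comparison the paper simply does not open this door: it treats ``obtained by applying $A\in\mathbf{A}$ of length $r-1$'' and ``cannot all vanish by Lemma \ref{B.3}'' as a single step, and your elaboration, while flagging a point worth thinking about, ends up being a hand-waved claim rather than a completed argument.
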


\begin {proof} That these $|\Delta^{\vee +}_{r-1}|$-tuples of scalars lie in $\ker M_r$ is already immediate from Proposition \ref {7.8} and the definition of $M_r$. Since $\ker M_r$ has dimension $n_{r-1}-n_r$ our assertion amounts to showing that these $(n_{r-1}-n_r)$-tuples are linearly independent.

 Consider a non-trivial linear combination of the $|\Delta^{\vee +}|$-tuples $$P=\sum_{j=1}^{n_{r-1}-n_r}d_j(P_{(j),\gamma^\vee})_{\gamma^\vee \in \Delta^{\vee +}}:d_j \in k,$$ such that  its entries $P_{\gamma^\vee}:=\sum_{j=1}^{n_{r-1}-n_r}d_jP_{(j),\gamma^\vee}$, are zero for all $\gamma^\vee \in \Delta^{\vee +}_{r-1}$. Since these latter entries are scalars they equal their leading terms.  Let $P^0$ defined by replacing \textit{all} the entries of $P$ by their leading terms.


By Remark 2 of \ref {8.1}, the entries $P^0$ are obtained by applying products of the $A_i:i\in I$ to the corresponding non-trivial copy $\{\sum_{j=1}^{n_{r-1}-n_r}d_j\partial q_{(j)}/\partial \varpi_i\}_{i \in I}$ of the adjoint module in $Q$ and in degree $r-1$.  In particular the entries for which $o(\gamma^\vee)=r-1$ are obtained by applying the $A \in \textbf {A}$ of length $r-1$ (starting from $A_i$ on $\sum_{j=1}^{n_{r-1}-n_r}d_j\partial q_{(j)}/\partial \varpi_i$).  Then by Lemma \ref {B.3} these entries cannot all vanish and this contradiction proves the required linear independence.

\end {proof}
\subsection{}\label{8.4}

View $M_r$ as a linear transformation of $X_{r-1}$ onto $X_r$, where $X_r:r \in \mathbb N^+$ is the $n_r$ dimensional $k$-vector space $\oplus_{\gamma^\vee \in \Delta^{\vee +}_r}kz_{\gamma^\vee}$.  Choose a subset $R \subset \Delta^{\vee +}_{r-1}$ such that the $z_{\gamma^\vee}:\gamma^\vee \in S:=\Delta^{\vee +}_{r-1}\setminus R$ is a basis for $X_{r-1}/ \ker M_r$.  Let $Y_{r-1}$(resp. $Z_{r-1}$) be the linear span of the $z_{\gamma^\vee}$:$\gamma^\vee \in S$ (resp. $R$).  By construction $Z_{r-1}=\ker M_r \quad \text {mod} \ Y_{r-1}$.  Then by 
Lemma \ref {8.3} the matrix with entries  $P_{(j),\gamma^\vee}:j=1,2,\ldots,n_{r-1}-n_r, \gamma^\vee \in R$ has non-zero determinant. Consequently for any set $(z^0_{\gamma^\vee})_{\gamma^\vee \in R}$ of scalars we can find a linear
combination $(\hat{P}_{\gamma^\vee})_{\gamma^\vee \in \Delta^{\vee +}}=\sum_{j=1}^{n_{r-1}-n_r}b_j(P_{(j),\gamma^\vee})_{\gamma^\vee \in \Delta^{\vee+}}$ of
 $|\Delta^{\vee +}|$-tuples, so that $\hat{P}_{\gamma^\vee} =z^0_{\gamma^\vee}, \forall \gamma^\vee \in R$.

By our induction hypothesis, $c_r$ on the left hand side of $(24)$ is defined. Through the above we can modify $(P_{\gamma^\vee})_{\gamma^\vee \in \Delta^{\vee +}}$ by adding to it an appropriate choice of $(\hat{P}_{\gamma^\vee})_{\gamma^\vee \in \Delta^{\vee +}}$ so that the $P_{\gamma^\vee}(s\rho):\gamma^\vee \in R$ satisfy the condition imposed by the right hand side of $(24)$ on the entries $\gamma^\vee \in R$. Moreover these latter terms come from Zhelobenko invariants of strictly lower degree.  Finally, just as in the case for which $\ker M_r=0$, by comparing the first parts of Propositions \ref {7.8} and \ref {8.1}, it follows that the remaining entries must also satisfy $(24)$.   This proves Proposition \ref {8.2}.
 

\subsection{}\label{8.5}

 By Proposition \ref {8.2} it is possible to satisfy the condition in \ref {3.6} by adding to the $P_i$ strictly lower order terms of the same form (that is to say coming from Zhelobenko invariants of strictly lower degree). Then by induction on degree we obtain from Corollary \ref {3.5} a proof of the analogue Kostant conjecture outside type $G_2$. Yet as noted in \ref {C.2} the latter case being of rank $2$ is trivial.  Thus we have proved the following

\begin {thm}  Let $J:=\sum_{i \in I}\varpi_i\otimes q_i$ be a Zhelobenko invariant and let $m$ be the common degree of the $q_i$.  Then
$$(e^\vee)^{m+1}(\sum_{i \in I}\varpi_iq_i(s\rho)=0), \forall s \in k.$$
\end {thm}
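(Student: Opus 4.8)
The plan is to reduce the Theorem, via the chain of results already assembled, to a pure linear-algebra statement about the matrices $M_r$ and then to exploit the two facts that give us the extra leverage: the $h_i$ all evaluate to the constant $s$ on $s\rho$, and the number of exponents equal to $r-1$ is exactly $\dim\ker M_r$. Concretely, I would first recall from \ref{3.6} and Corollary \ref{3.5} that it suffices to show the $\ell$-tuples $(P_i(s\rho))_{i\in I}$ and $(P^0_i(s\rho))_{i\in I}$ are proportional, where the proportionality scalar may depend on $s$ but not on $i$, and that we are permitted to modify the $P_i$ by the lower-order terms coming from Zhelobenko invariants of strictly lower degree. This flexibility is the whole point: we do not attempt to solve $(9)$ exactly, only to arrange the evaluations at $s\rho$ to line up.

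Next I would carry out the reverse induction on $r=\rho(\gamma^\vee)$ described in \ref{8.2}. The base case is $r=m$ (or $r=m_\ell$ in the extreme situation), where $o(\gamma^\vee)=m$ forces $\gamma^\vee=\beta^\vee_0$ and $P_{\beta^\vee_0}=P^0_{\beta^\vee_0}$ is a scalar, so $c_m:=1$ works. For the inductive step I would compare the two recurrences: Proposition \ref{7.8}(i) expresses $P_{\gamma^\vee}$ in terms of the $P_{\gamma^\vee-\alpha^\vee}$ divided by $1+\gamma^\vee$, and Proposition \ref{8.1}(i) does the same for $P^0_{\gamma^\vee}$ but divided by $\gamma^\vee$; since $\gamma^\vee$ takes the constant value $sr$ at $s\rho$ when $o(\gamma^\vee)=r$, the two denominators differ only by the constant ratio $(1+sr)/(sr)$, so passing from level $r$ to level $r-1$ multiplies the proportionality constant by exactly that factor, giving the implication $(24)$ and the definition $c_{r-1}=c_r(1+sr)/(sr)$ — provided the numerators, i.e.\ the action of $M_r$, actually pin down all the level-$(r-1)$ values.

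The main obstacle, and the heart of the argument, is precisely that $M_r$ does not have full rank: by Lemma \ref{8.2.5} it is injective $X_{r-1}\hookleftarrow\cdots$, wait, rather $\ad f^\vee$ is injective $X_r\hookrightarrow X_{r-1}$, so $M_r$ has rank $n_r<n_{r-1}$ in general, and $\dim\ker M_r=n_{r-1}-n_r$. The recurrence $(24)$ therefore only determines the $P_{\gamma^\vee}(s\rho)$ on a complement of $\ker M_r$; on the kernel directions the values are free. Here is where we spend the flexibility: by \ref{8.3}, the $n_{r-1}-n_r$ exponents equal to $r-1$ produce, via the KNV theorem, exactly $n_{r-1}-n_r$ new Zhelobenko invariants of the right lower degree, whose leading-term tuples $(P^0_{(j),\gamma^\vee})_{\gamma^\vee\in\Delta^{\vee+}_{r-1}}$ span $\ker M_r$ (linear independence is the content of Lemma \ref{8.3}, proved using Corollary \ref{B.3} and the fact that the length-$(r-1)$ BGG operators applied to a nonzero copy of the adjoint module in degree $r-1$ of $Q$ cannot all annihilate it). Picking a subset $R\subset\Delta^{\vee+}_{r-1}$ indexing a basis of $\ker M_r$ and invoking the resulting nonvanishing determinant (\ref{8.4}), I can add to $(P_{\gamma^\vee})$ a suitable combination of these lower-degree invariants so that the entries indexed by $R$ acquire exactly the values demanded by the right side of $(24)$; then the recurrence forces the remaining entries to comply as well, establishing Proposition \ref{8.2}. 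Finally, feeding Proposition \ref{8.2} back into \ref{3.6} and Corollary \ref{3.5} and inducting on degree yields the vanishing $(e^\vee)^{m+1}(\sum_i\varpi_i q_i(s\rho))=0$ for all $s\in k$ outside type $G_2$; and $G_2$, being rank $2$, is trivial by \ref{C.2}. I expect the delicate points to be bookkeeping the non-simply-laced bad pairs in Proposition \ref{7.8} (already handled in the excerpt) and verifying the linear-independence claim in Lemma \ref{8.3}, which is where the BGG machinery of Section $4$ is genuinely needed.
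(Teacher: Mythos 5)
Your proposal is correct and follows the paper's own argument essentially step for step: reduce via Corollary \ref{3.5} and \ref{3.6} to proportionality of the tuples, run the reverse induction of \ref{8.2} on $\rho(\gamma^\vee)$ comparing Propositions \ref{7.8}(i) and \ref{8.1}(i) at $s\rho$, use Lemma \ref{8.3} and the construction of \ref{8.4} to fill in the kernel directions of $M_r$ with lower-degree Zhelobenko invariants (matched in number by the exponents via Kostant--Shapiro), and dispose of $G_2$ by \ref{C.2}. The only slight imprecision is that $o(\gamma^\vee)=m$ forces $\gamma^\vee=\beta^\vee_0$ only when $m=m_\ell$; for a general exponent $m$ the base case is simply that every $P_{\gamma^\vee}$ with $o(\gamma^\vee)=m$ is a scalar hence equals $P^0_{\gamma^\vee}$, which is what you in fact use.
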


\textbf{Remark} 1.  There is one minor point we should mention.  In passing from the $q_i$ to the $P_i$ we not only divided by $h_i+2$ (which all take the same value on $s\rho$); but we also (in \ref {2.5}) applied the automorphism $\theta$ which makes a translation of argument by $\rho$.  Of course this only alters $s$ and so does not affect the result.

\

\textbf{Remark} 2. There are thus \textit{three} reasons which make evaluation at multiples of $\rho$ special with regard to the (analogue) Kostant conjecture.   The first is the Alekseev-Rohr proposition described in \ref {3.2}.  The second is the relationship described in the comparison of Propositions \ref {7.8} and \ref {8.1} culminating in Eq. $(24)$.  The third is that pointed out in Remark 1 above.

\section{The Figures}

The Zhelobenko monoid is illustrated in rank $4$.

In Figures $1-3$ the Zelobenko monoid in the non-simply laced cases $B_4,C_4,F_4$ is presented as a graph. Some additional edges are included to describe the action of bad pairs. (In type $F_4$ we were unable to present the graph in a planar or even three dimensional fashion.  For this reason dotted lines were used to represent edges out of the plane.  The result resembles one of M. C. Escher's impossible three dimensional figures.) In addition a horizontal broken line with label $s$ joins the vertices $P_{\gamma^\vee}:o(\gamma^\vee)=s$.

The vertex labelled $(i)$ on the bottom row corresponds to the element $P_i:i=1,2,3,4$.  An edge labelled $j$  corresponds to the element $A_j:j=1,2,3,4$.  It joins a vertex labelled by $P$ to a vertex labelled by $A_jP$. Vertices with unencircled labels describe the image of
$\mathscr P$.  Edges with unencircled labels (resp. unlabelled edges) joining unencircled vertices describe the transition $P_{\gamma^\vee} \rightarrow P_{\delta^\vee}$ effected by a good (resp. bad) pair when $o(\delta^\vee)-o(\gamma^\vee)=1$ (resp. $2$).

In Figures $4,5$, the simply-laced cases $A_4,D_4$ were drawn for comparison.  In this case $\mathscr P$ is bijective so in particular there are no encircled vertices.  Again all joined vertices correspond to good pairs and so there are no encircled labels on vertices or on edges and no added edges.

Finally Figure $3^*$ describes some data which was technically impossible to include in Figure $3$. Specifically the images of the map $\mathscr P$ is described. Here the $\alpha_1^\vee,\alpha_2^\vee$ are taken to be \textit{long} coroots which is the \textit{opposite} convention to that of Bourbaki). It allows us to use \cite [Planche VIII]{B} as if these were roots. Thus for example $1121$ means the \textit{coroot} $\alpha_1^\vee + \alpha_2^\vee + 2 \alpha_3^\vee + \alpha_4^\vee$.

In Figure $3^*$ all edges between unencircled edges, \textit{except those carrying a shaded circle}, correspond to a transition allowed by a good or a bad pair (see \ref {7.3}).  This figure can be used to illustrate the truth of Lemma \ref {7.7}(i) in type $F_4$ (which nevertheless has a case by case free proof.)

\begin{center}

\begin{picture}(300,400)(-110,0)

\multiput(-130,400)(7,0){53}{\line(1,0){2}}
\put(240,397){$7$}
\multiput(-130,340)(7,0){53}{\line(1,0){2}}
\put(240,337){$6$}
\multiput(-130,280)(7,0){53}{\line(1,0){2}}
\put(240,277){$5$}
\multiput(-130,220)(7,0){53}{\line(1,0){2}}
\put(240,217){$4$}
\multiput(-130,160)(7,0){53}{\line(1,0){2}}
\put(240,157){$3$}
\multiput(-130,100)(7,0){53}{\line(1,0){2}}
\put(240,97){$2$}
\multiput(-130,40)(7,0){53}{\line(1,0){2}}
\put(240,37){$1$}



\put(80,400){\circle*{4}}
\put(130,340){\circle*{4}}
\put(140,280){\circle*{4}}

\put(0,340){\circle*{4}}
\put(0,280){\circle*{4}}

\put(0,340){\line(4,3){80}}
\put(130,340){\line(-4,5){48}}
\put(0,280){\line(0,1){60}}
\put(140,280){\line(-1,5){12}}

\put(-40,280){\circle*{4}}
\put(-40,220){\circle*{4}}
\put(-80,220){\circle*{4}}
\put(60,220){\circle*{4}}
\put(140,220){\circle*{4}}

\put(-80,220){\line(2,3){80}}
\put(-40,220){\line(0,1){60}}
\put(-80,160){\line(2,3){80}}

\put(60,220){\line(-1,1){60}}
\put(60,220){\line(4,3){80}}
\put(140,220){\line(0,1){60}}

\put(-30,160){\circle*{4}}
\put(-80,160){\circle*{4}}
\put(60,160){\circle*{4}}
\put(140,160){\circle*{4}}

\put(-30,160){\line(-1,6){10}}
\put(-80,160){\line(0,1){60}}
\put(60,40){\line(0,1){180}}
\put(140,100){\line(0,1){120}}
\put(60,160){\line(4,3){80}}

\put(-50,100){\circle*{4}}
\put(30,100){\circle*{4}}
\put(60,100){\circle*{4}}
\put(140,100){\circle*{4}}

\put(-50,100){\line(1,3){20}}
\put(30,100){\line(-1,1){60}}
\put(30,100){\line(1,2){30}}
\put(60,100){\line(4,3){80}}
\put(130,40){\line(1,6){10}}
\put(-50,100){\line(-1,2){30}}
\put(-10,40){\line(-2,3){40}}

\put(-50,100){\circle*{4}}
\put(30,100){\circle*{4}}
\put(60,100){\circle*{4}}

\put(-10,40){\circle*{4}}
\put(30,40){\circle*{4}}
\put(60,40){\circle*{4}}
\put(130,40){\circle*{4}}

\put(60,40){\line(4,3){80}}
\put(30,40){\line(0,1){60}}
\put(30,40){\line(1,2){30}}

\put(108,370){$1$}
\put(32,370){$2$}
\put(111,374){\circle{12}}

\put(135,310){$2$}
\put(-28,310){$3$}
\put(0,310){$1$}

\put(-67,250){$4$}
\put(-40,250){$1$}
\put(-15,250){$3$}
\put(32,250){$2$}
\put(95,250){$1$}
\put(142,250){$3$}
\put(35,254){\circle{12}}

\put(-68,190){$4$}
\put(-88,190){$1$}
\put(-35,190){$2$}
\put(53,190){$3$}
\put(95,190){$1$}
\put(142,190){$4$}

\put(-48,130){$4$}
\put(-75,130){$2$}
\put(0,130){$3$}
\put(36,130){$2$}
\put(95,130){$1$}
\put(142,130){$3$}
\put(60,130){$4$}
\put(3,134){\circle{12}}

\put(-47,70){$3$}
\put(22,70){$4$}
\put(40,70){$2$}
\put(60,70){$3$}
\put(96,70){$1$}
\put(136,70){$2$}

\put(-27,25){$(4)$}
\put(23,25){$(3)$}
\put(53,25){$(2)$}
\put(123,25){$(1)$}

\put(0,340){\circle{9}}
\put(-40,280){\circle{9}}

\put(-80,220){\circle{9}}
\put(-40,220){\circle{9}}
\put(-80,160){\circle{9}}
\put(-50,100){\circle{9}}


\put(-30,160){\line(1,4){30}}
\put(-10,40){\line(-1,6){20}}
\put(0,280){\line(2,3){80}}


\end{picture}

\end{center}
\begin{center}
{\it Figure $1$.} \\
\end{center}

\begin{center}
{\bf The Zhelobenko monoid in type $B_4$.} \\
\end{center}

\begin{center}

\begin{picture}(300,400)(-110,0)


\multiput(-130,400)(7,0){53}{\line(1,0){2}}
\put(240,397){$7$}
\multiput(-130,340)(7,0){53}{\line(1,0){2}}
\put(240,337){$6$}
\multiput(-130,280)(7,0){53}{\line(1,0){2}}
\put(240,277){$5$}
\multiput(-130,220)(7,0){53}{\line(1,0){2}}
\put(240,217){$4$}
\multiput(-130,160)(7,0){53}{\line(1,0){2}}
\put(240,157){$3$}
\multiput(-130,100)(7,0){53}{\line(1,0){2}}
\put(240,97){$2$}
\multiput(-130,40)(7,0){53}{\line(1,0){2}}
\put(240,37){$1$}


\put(80,400){\circle*{4}}
\put(130,340){\circle*{4}}
\put(140,280){\circle*{4}}

\put(130,340){\circle{9}}
\put(140,280){\circle{9}}

\put(0,340){\circle*{4}}
\put(0,280){\circle*{4}}

\put(0,340){\line(4,3){80}}
\put(130,340){\line(-4,5){48}}
\put(0,280){\line(0,1){60}}
\put(140,280){\line(-1,5){12}}

\put(-40,280){\circle*{4}}
\put(-40,220){\circle*{4}}
\put(-80,220){\circle*{4}}
\put(60,220){\circle*{4}}
\put(140,220){\circle*{4}}

\put(60,220){\circle{9}}
\put(140,220){\circle{9}}

\put(-80,220){\line(2,3){80}}
\put(-40,220){\line(0,1){60}}
\put(-80,160){\line(2,3){80}}

\put(60,220){\line(-1,1){60}}
\put(60,220){\line(4,3){80}}
\put(140,220){\line(0,1){60}}

\put(-30,160){\circle*{4}}
\put(-80,160){\circle*{4}}
\put(60,160){\circle*{4}}
\put(140,160){\circle*{4}}

\put(60,160){\circle{9}}

\put(-30,160){\line(-1,6){10}}
\put(-80,160){\line(0,1){60}}
\put(60,40){\line(0,1){180}}
\put(140,100){\line(0,1){120}}
\put(60,160){\line(4,3){80}}

\put(-50,100){\circle*{4}}
\put(30,100){\circle*{4}}
\put(60,100){\circle*{4}}
\put(140,100){\circle*{4}}

\put(30,100){\circle{9}}

\put(-50,100){\line(1,3){20}}
\put(30,100){\line(-1,1){60}}
\put(30,100){\line(1,2){30}}
\put(60,100){\line(4,3){80}}
\put(130,40){\line(1,6){10}}
\put(-20,40){\line(-1,2){60}}

\put(-50,100){\circle*{4}}
\put(30,100){\circle*{4}}
\put(60,100){\circle*{4}}

\put(-20,40){\circle*{4}}
\put(30,40){\circle*{4}}
\put(60,40){\circle*{4}}
\put(130,40){\circle*{4}}
\put(30,40){\line (-1,2){60}}

\put(60,40){\line(4,3){80}}
\put(30,40){\line(0,1){60}}
\put(30,40){\line(1,2){30}}

\put(108,370){$1$}
\put(32,370){$2$}

\put(135,310){$2$}
\put(-28,310){$3$}
\put(0,310){$1$}

\put(-67,250){$4$}
\put(-40,250){$1$}
\put(-15,250){$3$}
\put(32,250){$2$}
\put(95,250){$1$}
\put(142,250){$3$}
\put(-64,254){\circle{12}}

\put(-65,194){\circle{12}}
\put(-68,190){$4$}
\put(-88,190){$1$}
\put(-35,190){$2$}
\put(53,190){$3$}
\put(95,190){$1$}
\put(142,190){$4$}
\put(-44,134){\circle{12}}

\put(-48,130){$4$}
\put(-75,130){$2$}
\put(0,130){$3$}
\put(36,130){$2$}
\put(95,130){$1$}
\put(142,130){$3$}
\put(60,130){$4$}

\put(-47,70){$3$}
\put(22,70){$4$}
\put(40,70){$2$}
\put(60,70){$3$}
\put(96,70){$1$}
\put(136,70){$2$}

\put(-27,25){$(4)$}
\put(23,25){$(3)$}
\put(53,25){$(2)$}
\put(123,25){$(1)$}


\end{picture}

\end{center}
\begin{center}
{\it Figure $2$.} \\
\end{center}

\begin{center}
{\bf The Zhelobenko monoid in type $C_4$.} \\
\end{center}

\begin{center}

\begin{picture}(300,500)(-110,20)
\setlength{\unitlength}{0.28mm}


\multiput(-170,640)(7,0){68}{\line(1,0){2}}
\put(305,637){$11$}
\multiput(-170,580)(7,0){68}{\line(1,0){2}}
\put(305,577){$10$}
\multiput(-170,520)(7,0){68}{\line(1,0){2}}
\put(305,517){$9$}
\multiput(-170,460)(7,0){68}{\line(1,0){2}}
\put(305,457){$8$}
\multiput(-170,400)(7,0){68}{\line(1,0){2}}
\put(305,397){$7$}
\multiput(-170,340)(7,0){68}{\line(1,0){2}}
\put(305,337){$6$}
\multiput(-170,280)(7,0){68}{\line(1,0){2}}
\put(305,277){$5$}
\multiput(-170,220)(7,0){68}{\line(1,0){2}}
\put(305,217){$4$}
\multiput(-170,160)(7,0){68}{\line(1,0){2}}
\put(305,157){$3$}
\multiput(-170,100)(7,0){68}{\line(1,0){2}}
\put(305,97){$2$}
\multiput(-170,40)(7,0){68}{\line(1,0){2}}
\put(305,37){$1$}


\put(50,640){\circle*{4}}
\put(-10,580){\circle*{4}}
\put(110,580){\circle*{4}}
\put(110,580){\circle{9}}

\put(50,520){\circle*{4}}
\put(-70,520){\circle*{4}}
\put(170,520){\circle*{4}}
\put(50,520){\circle{9}}
\put(170,520){\circle{9}}

\put(110,580){\circle{9}}

\put(-130,460){\circle*{4}}
\put(-10,460){\circle*{4}}
\put(110,460){\circle*{4}}
\put(230,460){\circle*{4}}
\put(-10,460){\circle{9}}
\put(110,460){\circle{9}}
\put(230,460){\circle{9}}

\put(230,460){\line(-1,1){180}}
\put(-130,460){\line(1,1){180}}
\put(110,460){\line(-1,1){120}}
\put(-10,460){\line(1,1){120}}
\put(110,460){\line(1,1){60}}
\put(-10,460){\line(-1,1){60}}
\put(-30,400){\line (-1,3){40}}

\put(-150,400){\circle*{4}}
\put(-30,400){\circle*{4}}
\put(130,400){\circle*{4}}
\put(250,400){\circle*{4}}
\put(130,400){\circle{9}}
\put(250,400){\circle{9}}

\put(130,400){\line(-1,3){20}}
\put(-150,400){\line(1,3){20}}
\put(-30,400){\line(1,3){20}}
\put(250,400){\line(-1,3){20}}

\put(-90,340){\circle*{4}}
\put(-10,340){\circle*{4}}
\put(110,340){\circle*{4}}
\put(190,340){\circle*{4}}
\put(110,340){\circle{9}}
\put(190,340){\circle{9}}

\put(-90,340){\line(-1,1){60}}
\put(-10,340){\line(-1,3){20}}
\put(110,340){\line(1,3){20}}
\put(190,340){\line(1,1){60}}
\put(-90,340){\line(1,1){60}}
\put(190,340){\line(-1,1){60}}

\put(-70,280){\circle*{4}}
\put(-50,280){\circle*{4}}
\put(50,280){\circle*{4}}
\put(150,280){\circle*{4}}
\put(170,280){\circle*{4}}
\put(50,280){\circle{9}}
\put(170,280){\circle{9}}

\put(-70,280){\line(-1,3){20}}
\put(-70,280){\line(1,1){60}}
\put(-50,280){\line(2,3){40}}
\put(50,280){\line(-1,1){60}}
\put(50,280){\line(1,1){60}}
\put(150,280){\line(-2,3){40}}
\put(170,280){\line(-1,1){60}}
\put(170,280){\line(1,3){20}}

\put(-90,220){\circle*{4}}
\put(-30,220){\circle*{4}}
\put(90,220){\circle*{4}}
\put(130,220){\circle*{4}}
\put(10,220){\circle*{4}}
\put(190,220){\circle*{4}}
\put(10,220){\circle{9}}
\put(130,220){\circle{9}}
\put(190,220){\circle{9}}

\put(-90,220){\line(1,3){20}}
\put(10,220){\line(-1,1){60}}
\put(10,220){\line(2,3){40}}
\put(90,220){\line(-2,3){40}}
\put(90,220){\line(1,1){60}}
\put(190,220){\line(-1,3){20}}
\put(190,220){\line(-2,3){40}}
\put(-90,220){\line(2,3){40}}
\multiput(130,220)(2,3){20}{\circle*{1}}
\multiput(-30,220)(-2,3){20}{\circle*{1}}

\put(-50,160){\circle*{4}}
\put(30,160){\circle*{4}}
\put(70,160){\circle*{4}}
\put(150,160){\circle*{4}}
\put(50,160){\circle*{4}}
\put(70,160){\circle{9}}
\put(150,160){\circle{9}}

\put(-50,160){\line(-2,3){40}}
\put(150,160){\line(2,3){40}}
\multiput(-50,160)(1,3){20}{\circle*{1}}
\multiput(150,160)(-1,3){20}{\circle*{1}}
\multiput(30,160)(-3,3){20}{\circle*{1}}
\multiput(70,160)(3,3){20}{\circle*{1}}
\multiput(70,160)(-3,3){20}{\circle*{1}}
\multiput(30,160)(3,3){20}{\circle*{1}}

\put(-70,100){\circle*{4}}
\put(10,100){\circle*{4}}
\put(90,100){\circle*{4}}
\put(170,100){\circle*{4}}
\put(90,100){\circle{9}}

\put(50,160){\line(-2,3){40}}
\put(50,160){\line(2,3){40}}

\put(-70,100){\line(1,3){20}}
\put(170,100){\line(-1,3){20}}
\multiput(90,100)(-1,3){20}{\circle*{1}}
\multiput(10,100)(1,3){20}{\circle*{1}}

\put(10,100){\line(2,3){40}}
\put(90,100){\line(-2,3){40}}
\put(10,100){\line(-1,1){60}}
\put(90,100){\line(1,1){60}}

\put(-90,40){\circle*{4}}
\put(-10,40){\circle*{4}}
\put(110,40){\circle*{4}}
\put(190,40){\circle*{4}}

\put(-90,40){\line(1,3){20}}
\put(-10,40){\line(1,3){20}}
\put(-10,40){\line(-1,1){60}}
\put(110,40){\line(-1,3){20}}
\put(110,40){\line(1,1){60}}
\put(190,40){\line(-1,3){20}}

\put(16,610){$1$}
\put(79,610){$4$}
\put(-48,550){$2$}
\put(76,550){$1$}
\put(20,550){$4$}
\put(140,550){$3$}

\put(16,610){$1$}
\put(79,610){$4$}

\put(-107,490){$3$}
\put(-40,490){$4$}
\put(28,490){$2$}
\put(67,490){$3$}
\put(135,490){$1$}
\put(200,490){$2$}
\put(-104,495){\circle{11}}

\put(-146,430){$4$}
\put(-17,430){$3$}
\put(110,430){$2$}
\put(240,430){$1$}

\put(-134,370){$3$}
\put(-53,370){$4$}
\put(-20,370){$2$}
\put(114,370){$3$}
\put(145,370){$1$}
\put(228,370){$2$}
\put(-49,373){\circle{11}}

\put(-90,310){$2$}
\put(-26,310){$1$}
\put(-48,310){$4$}
\put(21,310){$3$}
\put(72,310){$2$}
\put(117,310){$4$}
\put(140,310){$1$}
\put(183,310){$3$}
\put(-45,314){\circle{11}}

\put(180,250){$4$}
\put(166,240){$1$}
\put(-48,250){$3$}
\put(-21,250){$3$}
\put(24,250){$1$}
\put(69,250){$4$}
\put(116,250){$2$}
\put(143,250){$2$}
\put(-85,250){$1$}
\put(-72,240){$4$}
\put(-68,244){\circle{11}}

\put(-80,190){$3$}
\put(-45,190){$1$}
\put(-15,190){$4$}
\put(35,180){$4$}
\put(36,195){$2$}
\put(60,195){$3$}
\put(74,190){$1$}
\put(109,190){$1$}
\put(130,190){$4$}
\put(176,190){$2$}
\put(-68,244){\circle{11}}
\put(64,199){\circle{11}}

\put(-65,130){$2$}
\put(-21,130){$4$}
\put(15,130){$1$}
\put(33,130){$3$}
\put(66,130){$2$}
\put(80,130){$4$}
\put(115,130){$1$}
\put(160,130){$3$}
\put(36,134){\circle{11}}

\put(-88,70){$3$}
\put(-41,70){$4$}
\put(2,70){$2$}
\put(96,70){$3$}
\put(136,70){$1$}
\put(180,70){$2$}

\put(-98,27){$(4)$}
\put(-18,27){$(3)$}

\put(102,27){$(2)$}
\put(182,27){$(1)$}

\put(110,40){\line(-1,2){60}}
\put(50,160){\line(-5,6){100}}

\end{picture}

\end{center}
\begin{center}
{\it Figure $3$.} \\
\end{center}

\begin {center}
{\bf The Zhelobenko monoid in type $F_4$.}\\
\end{center}

\begin{center}

\begin{picture}(300,500)(-110,20)
\setlength{\unitlength}{0.28mm}

\multiput(-170,640)(7,0){68}{\line(1,0){2}}
\put(305,637){$11$}
\multiput(-170,580)(7,0){68}{\line(1,0){2}}
\put(305,577){$10$}
\multiput(-170,520)(7,0){68}{\line(1,0){2}}
\put(305,517){$9$}
\multiput(-170,460)(7,0){68}{\line(1,0){2}}
\put(305,457){$8$}
\multiput(-170,400)(7,0){68}{\line(1,0){2}}
\put(305,397){$7$}
\multiput(-170,340)(7,0){68}{\line(1,0){2}}
\put(305,337){$6$}
\multiput(-170,280)(7,0){68}{\line(1,0){2}}
\put(305,277){$5$}
\multiput(-170,220)(7,0){68}{\line(1,0){2}}
\put(305,217){$4$}
\multiput(-170,160)(7,0){68}{\line(1,0){2}}
\put(305,157){$3$}
\multiput(-170,100)(7,0){68}{\line(1,0){2}}
\put(305,97){$2$}
\multiput(-170,40)(7,0){68}{\line(1,0){2}}
\put(305,37){$1$}

\put(50,640){\circle*{4}}
\put(-10,580){\circle*{4}}
\put(110,580){\circle*{4}}
\put(110,580){\circle{9}}

\put(40,645){$2342$}

\put(0,581){$1342$}

\put(50,520){\circle*{4}}
\put(-70,520){\circle*{4}}
\put(170,520){\circle*{4}}
\put(50,520){\circle{9}}
\put(170,520){\circle{9}}

\put(-60,521){$1242$}

\put(110,580){\circle{9}}

\put(-130,460){\circle*{4}}
\put(-10,460){\circle*{4}}
\put(110,460){\circle*{4}}
\put(230,460){\circle*{4}}
\put(-10,460){\circle{9}}
\put(110,460){\circle{9}}
\put(230,460){\circle{9}}

\put(-120,461){$1232$}

\put(230,460){\line(-1,1){180}}
\put(-130,460){\line(1,1){180}}
\put(110,460){\line(-1,1){120}}
\put(-10,460){\line(1,1){120}}
\put(110,460){\line(1,1){60}}
\put(-10,460){\line(-1,1){60}}
\put(-30,400){\line (-1,3){40}}

\put(-150,400){\circle*{4}}
\put(-30,400){\circle*{4}}
\put(130,400){\circle*{4}}
\put(250,400){\circle*{4}}
\put(130,400){\circle{9}}
\put(250,400){\circle{9}}

\put(-145,401){$1231$}
\put(-25,401){$1222$}

\put(130,400){\line(-1,3){20}}
\put(-150,400){\line(1,3){20}}
\put(-30,400){\line(1,3){20}}
\put(250,400){\line(-1,3){20}}

\put(-90,340){\circle*{4}}
\put(-10,340){\circle*{4}}
\put(110,340){\circle*{4}}
\put(190,340){\circle*{4}}
\put(110,340){\circle{9}}
\put(190,340){\circle{9}}

\put(-80,341){$1221$}
\put(-10,341){$1122$}

\put(-90,340){\line(-1,1){60}}
\put(-10,340){\line(-1,3){20}}
\put(110,340){\line(1,3){20}}
\put(190,340){\line(1,1){60}}
\put(-90,340){\line(1,1){60}}
\put(190,340){\line(-1,1){60}}

\put(-70,280){\circle*{4}}
\put(-50,280){\circle*{4}}
\put(50,280){\circle*{4}}
\put(150,280){\circle*{4}}
\put(170,280){\circle*{4}}
\put(50,280){\circle{9}}
\put(170,280){\circle{9}}

\put(-105,281){$1121$}
\put(-42,281){$0122$}
\put(115,281){$1220$}

\put(-70,280){\line(-1,3){20}}
\put(-70,280){\line(1,1){60}}
\put(-50,280){\line(2,3){40}}
\put(50,280){\line(-1,1){60}}
\put(50,280){\line(1,1){60}}
\put(150,280){\line(-2,3){40}}
\put(170,280){\line(-1,1){60}}
\put(170,280){\line(1,3){20}}

\put(-90,220){\circle*{4}}
\put(-30,220){\circle*{4}}
\put(90,220){\circle*{4}}
\put(130,220){\circle*{4}}
\put(10,220){\circle*{4}}
\put(190,220){\circle*{4}}
\put(10,220){\circle{9}}
\put(130,220){\circle{9}}
\put(190,220){\circle{9}}

\put(-120,221){$0121$}
\put(-45,221){$1111$}
\put(75,221){$1120$}

\put(-90,220){\line(1,3){20}}
\put(10,220){\line(-1,1){60}}
\put(10,220){\line(2,3){40}}
\put(90,220){\line(-2,3){40}}
\put(90,220){\line(1,1){60}}
\put(190,220){\line(-1,3){20}}

\put(190,220){\line(-2,3){40}}
\put(-90,220){\line(2,3){40}}
\multiput(130,220)(2,3){20}{\circle*{1}}
\multiput(-30,220)(-2,3){20}{\circle*{1}}

\put(-50,160){\circle*{4}}
\put(30,160){\circle*{4}}
\put(70,160){\circle*{4}}
\put(150,160){\circle*{4}}
\put(50,160){\circle*{4}}
\put(70,160){\circle{9}}
\put(150,160){\circle{9}}

\put(-85,161){$0111$}
\put(-10,161){$1110$}
\put(35,149){$0120$}

\put(-50,160){\line(-2,3){40}}
\put(150,160){\line(2,3){40}}

\multiput(-50,160)(1,3){20}{\circle*{1}}
\multiput(150,160)(-1,3){20}{\circle*{1}}
\multiput(30,160)(-3,3){20}{\circle*{1}}
\multiput(70,160)(3,3){20}{\circle*{1}}
\multiput(70,160)(-3,3){20}{\circle*{1}}
\multiput(30,160)(3,3){20}{\circle*{1}}

\put(-70,100){\circle*{4}}
\put(10,100){\circle*{4}}
\put(90,100){\circle*{4}}
\put(170,100){\circle*{4}}
\put(90,100){\circle{9}}

\put(-100,101){$0011$}
\put(-24,101){$0110$}
\put(170,101){$1100$}

\put(50,160){\line(-2,3){40}}
\put(50,160){\line(2,3){40}}

\put(-70,100){\line(1,3){20}}
\put(170,100){\line(-1,3){20}}
\multiput(90,100)(-1,3){20}{\circle*{1}}
\multiput(10,100)(1,3){20}{\circle*{1}}

\put(10,100){\line(2,3){40}}
\put(90,100){\line(-2,3){40}}
\put(10,100){\line(-1,1){60}}
\put(90,100){\line(1,1){60}}

\put(-90,40){\circle*{4}}
\put(-10,40){\circle*{4}}
\put(110,40){\circle*{4}}
\put(190,40){\circle*{4}}

\put(-90,40){\line(1,3){20}}
\put(-10,40){\line(1,3){20}}
\put(-10,40){\line(-1,1){60}}
\put(110,40){\line(-1,3){20}}
\put(110,40){\line(1,1){60}}
\put(190,40){\line(-1,3){20}}

\put(-100,490){\circle*{9}}

\put(-60,370){\circle*{9}}
\put(34,134){\circle*{9}}
\put(68,199){\circle*{9}}

\put(-40,310){\circle*{9}}

\put(-70,250){\circle*{9}}

\put(-98,27){$0001$}
\put(-18,27){$0010$}

\put(102,27){$0100$}
\put(182,27){$1000$}

\put(110,40){\line(-1,2){60}}
\put(50,160){\line(-5,6){100}}

\end{picture}

\end{center}
\begin{center}
{\it Figure $3^*$.} \\
\end{center}

\begin {center}
{\bf The Zhelobenko monoid in type $F_4$ describing $\mathscr P$.}\\
\end{center}

\begin{center}

\begin{picture}(300,250)(-110,10)

\multiput(-130,160)(7,0){53}{\line(1,0){2}}
\put(240,157){$4$}
\multiput(-130,120)(7,0){53}{\line(1,0){2}}
\put(240,117){$3$}
\multiput(-130,80)(7,0){53}{\line(1,0){2}}
\put(240,77){$2$}
\multiput(-130,40)(7,0){53}{\line(1,0){2}}
\put(240,37){$1$}

\put(50,160){\circle*{4}}
\put(10,120){\circle*{4}}
\put(90,120){\circle*{4}}

\put(-70,40){\circle*{4}}
\put(10,40){\circle*{4}}
\put(90,40){\circle*{4}}
\put(170,40){\circle*{4}}

\put(50,80){\circle*{4}}
\put(-30,80){\circle*{4}}
\put(130,80){\circle*{4}}

\put(-70,40){\line(1,1){120}}
\put(170,40){\line(-1,1){120}}

\put(10,40){\line(1,1){80}}
\put(10,40){\line(-1,1){40}}
\put(90,40){\line(1,1){40}}
\put(90,40){\line(-1,1){80}}

\put(25,140){$1$}
\put(70,140){$4$}

\put(-16,100){$2$}
\put(110,100){$3$}
\put(30,100){$4$}
\put(65,100){$1$}

\put(-56,60){$3$}
\put(-10,60){$4$}
\put(25,60){$2$}
\put(70,60){$3$}
\put(105,60){$1$}
\put(149,60){$2$}

\put(-79,25){$(4)$}
\put(1,25){$(3)$}
\put(83,25){$(2)$}
\put(163,25){$(1)$}

\end{picture}

\end{center}
\begin{center}
{\it Figure $4$.} \\
\end{center}

\begin{center}
{\bf The Zhelobenko monoid in type $A_4$.} \\
\end{center}

\begin{center}

\begin{picture}(300,350)(-110,10)

\multiput(-130,360)(7,0){53}{\line(1,0){2}}
\put(240,357){$5$}
\multiput(-130,280)(7,0){53}{\line(1,0){2}}
\put(240,277){$4$}
\multiput(-130,200)(7,0){53}{\line(1,0){2}}
\put(240,197){$3$}
\multiput(-130,120)(7,0){53}{\line(1,0){2}}
\put(240,117){$2$}
\multiput(-130,40)(7,0){53}{\line(1,0){2}}
\put(240,37){$1$}

\put(50,360){\circle*{4}}
\put(50,280){\circle*{4}}
\put(50,200){\circle*{4}}

\put(50,120){\circle*{4}}

\put(50,40){\circle*{4}}

\put(-30,120){\circle*{4}}

\put(-30,200){\circle*{4}}
\put(130,200){\circle*{4}}
\put(130,120){\circle*{4}}

\put(-70,40){\circle*{4}}

\put(170,40){\circle*{4}}
\put(130,40){\circle*{4}}



\put(50,200){\line(0,1){160}}

\put(130,200){\line(-1,1){80}}
\put(-30,200){\line(1,1){80}}
\put(130,120){\line(-1,1){80}}
\put(-30,120){\line(1,1){80}}
\put(130,120){\line(0,1){80}}
\put(-30,120){\line(0,1){80}}
\put(50,40){\line(-1,1){80}}
\put(50,40){\line(1,1){80}}

\multiput(50,120)(2,2){40}{\circle*{1}}

\multiput(50,120)(-2,2){40}{\circle*{1}}

\multiput(50,40)(0,2){40}{\circle*{1}}

\put(170,40){\line(-1,2){40}}

\multiput(130,40)(-2,2){40}{\circle*{1}}

\put(-70,40){\line(1,2) {40}}

\put(43,320){$2$}
\put(43,240){$1$}

\put(4,240){$3$}
\put(91,240){$4$}

\put(-38,180){$1$}
\put(22,180){$3$}

\put(73,180){$4$}
\put(132,180){$1$}

\put(16,140){$4$}

\put(78,140){$3$}

\put(-15,90){$4$}
\put(-52,90){$2$}
\put(145,90){$2$}
\put(108,90){$3$}
\put(65,90){$2$}
\put(52,90){$1$}

\put(44,25){$(2)$}
\put(124,25){$(1)$}
\put(164,25){$(3)$}
\put(-78,25){$(4)$}

\end{picture}

\end{center}
\begin{center}
{\it Figure $5$.} \\
\end{center}

\begin{center}
{\bf The Zhelobenko monoid in type $D_4$.} \\
\end{center}

\end{document}